\newcommand{\OOO}{\mathscr{O}}
\newcommand{\Aff}{\mathbb{A}}
\newcommand{\CC}{\mathbb{C}}
\newcommand{\ZZ}{\mathbb{Z}}
\newcommand{\PP}{\mathbb{P}}
\newcommand{\QQ}{\mathbb{Q}}
\newcommand{\NN}{\mathbb{N}}
\newcommand{\FF}{\mathbb{F}}
\newcommand{\UU}{\mathbb{U}}
\newcommand{\BB}{\mathbb{B}}
\newcommand{\Ga}{\mathbb{G}_{\mathrm{a}}}
\newcommand{\Gm}{\mathbb{G}_{\mathrm{m}}}
\newcommand{\type}[1]{\mathrm{#1}}
\newcommand{\mumu}{\boldsymbol{\mu}}
\renewcommand{\rho}{\uprho}
\newcommand{\simQ}{\sim_{\QQ}}
\newcommand{\NE}{\overline{\operatorname{NE}}}
\newcommand{\Aut}{\operatorname{Aut}}
\newcommand{\mult}{\operatorname{mult}}
\newcommand{\Pic}{\operatorname{Pic}}
\newcommand{\Cl}{\operatorname{Cl}}
\newcommand{\rk}{\operatorname{rk}}
\newcommand{\GL}{\operatorname{GL}}
\newcommand{\PGL}{\operatorname{PGL}}
\newcommand{\Sing}{\operatorname{Sing}}
\newcommand{\Type}{\operatorname{Type}}
\newcommand{\numl}{\operatorname{\#}}
\newcommand{\tors}{\mathrm{tors}}
\newcommand{\red}{\mathrm{red}}
\newcommand{\ind}{\tau}
\newcommand{\xref}[1]{\textup{\ref{#1}}}
\newtheorem*{maintheorem}{Main Theorem}
\newtheorem{theorem}[equation]{Theorem}
\newtheorem{proposition}[equation]{Proposition}
\newtheorem{lemma}[equation]{Lemma}
\newtheorem{corollary}[equation]{Corollary}
\theoremstyle{definition}
\newtheorem{example}[equation]{Example}
\newtheorem{definition}[equation]{Definition}
\newtheorem{remark}[equation]{Remark}
\makeatletter\@addtoreset{equation}{section} \makeatother
\newcounter{No}
\renewcommand{\theNo}{{{\rm\arabic{No}$^o$}}}
\def\no{\refstepcounter{No}{\theNo}}
\author{Ivan Cheltsov}
\address{Ivan Cheltsov, University of Edinburgh, Edinburgh, UK}
\email{I.Cheltsov@ed.ac.uk}
\author{Yuri Prokhorov}
\address{Yuri Prokhorov, Steklov Mathematical Institute, Moscow, Russia}
\email{prokhoro@mi-ras.ru}
\title{Del Pezzo surfaces with infinite automorphism groups}
\begin{document}

\begin{abstract}
We classify del Pezzo surfaces with Du Val singularities that have infinite automorphism groups,
and describe the~connected components of their automorphisms groups.
\end{abstract}

\maketitle
\setcounter{tocdepth}1

Throughout this paper, we always assume that all varieties are projective and defined over an~algebraically closed field $\Bbbk$ of characteristic $0$.

\section{Introduction}
\label{section:intro}

Automorphism groups of smooth del Pezzo surfaces are well-studied (see, for example,~\cite{Dolgachev,DolgachevIskovskikh}).
In~particular, if $X$ is a~smooth del Pezzo surface, then $\Aut(X)$ is infinite if and only if $X$ is toric.
Moreover, if $X$ is a~smooth toric del Pezzo surface, then $\Aut^0(X)$ can be described as follows:

\smallskip

\begin{center}\renewcommand{\arraystretch}{1.2}
\begin{tabularx}{0.80\textwidth}{|c|c|X|c|X|}
\hline
$K_X^2$&$\Aut^0(X)$&\multicolumn2{c|}{equation \& total space}
\\
\hhline{|=|=|=|=|=|}
\hline
$6$&$\Gm^2$&\centering$u_0v_0w_0=u_1v_1w_1$&$\PP^1\times\PP^1\times\PP^1$
\\
\hline
$7$&$\BB_2\times\BB_2$&&
\\
\hline
$8$&$\Ga^2\rtimes\GL_2(\Bbbk)$&\centering $u_0v_0=u_1v_1$&$\PP^2\times\PP^1$
\\
\hline
$8$&$\PGL_2(\Bbbk)\times\PGL_2(\Bbbk)$&\centering---&$\PP^1\times\PP^1$
\\
\hline
$9$&$\PGL_3(\Bbbk)$&\centering---&$\PP^2$
\\
\hline
\end{tabularx}
\end{center}

\smallskip

\noindent where $\Ga$ is a~one-dimensional unipotent additive group,
$\Gm$ is a~one-dimensional algebraic torus, and $\BB_2$ is the~Borel subgroup of $\PGL_2(\Bbbk)$.
In this paper, we prove similar result for del Pezzo surfaces with at worst Du Val singularities.
For short, we call such surfaces \textit{Du Val del Pezzo surfaces}.
Our main result is the~following.

\begin{maintheorem}
\label{theorem:main}
Let $X$ be a~Du Val del Pezzo surface.
Then the~group $\Aut(X)$ is infinite if and only if $X$ is described in Big Table in Section~\xref{section:tables}.
\end{maintheorem}
Everywhere below the~number $n^0$ refers to the~corresponding surface in Big Table in Section~\xref{section:tables}.
As a consequence of our classification we have the~following

\begin{corollary}
\label{corollary:cscK-metric}
Let $X$ be a~Du Val del Pezzo surface.
Then the~group $\Aut(X)$ is not reductive if and only if $X$ is one of the~$23$ surfaces
\ref{d=2:A7}, \ref{d=3:E6}, \ref{d=3:A5-A1}, \ref{d=3:A5}, \ref{d=4:D5}, \ref{d=4:A3-2A1}, \ref{d=4:D4}, \ref{d=4:A4}, \ref{d=4:A3-A1},
\ref{d=4:A3-4lines}, \ref{d=5:A4}, \ref{d=5:A3}, \ref{d=5:A2-A1}, \ref{d=5:A2}, \ref{d=6:A2-A1}, \ref{d=6:A2}, \ref{d=6:2A1}, \ref{d=6:A1-3l}, \ref{d=6:A1-4l}, \ref{d=7}, \ref{d=7:smooth}, \ref{d=8}, \ref{d=8:F1}.
\end{corollary}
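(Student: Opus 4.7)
The plan is to reduce the statement to a direct inspection of the Big Table supplied by the Main Theorem. Recall that an algebraic group $G$ is reductive precisely when the unipotent radical of the identity component $G^0$ is trivial. In particular, every finite group is reductive, so if $\Aut(X)$ is finite then there is nothing to check. Therefore the only surfaces $X$ that can possibly have non-reductive automorphism group are those appearing in the Big Table, and for such $X$ the question of reductivity of $\Aut(X)$ reduces to the question of reductivity of the connected component $\Aut^0(X)$, which is recorded in the table alongside the defining equation.

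The first step is therefore to go through the Big Table entry by entry and read off $\Aut^0(X)$. In the smooth toric cases collected in the introduction, one already sees the dichotomy: the surfaces with $\Aut^0(X) \in \{\Gm^2,\ \PGL_2(\Bbbk)\times\PGL_2(\Bbbk),\ \PGL_3(\Bbbk)\}$ are reductive, whereas those with $\Aut^0(X)$ containing the Borel $\BB_2$ or a factor $\Ga^2\rtimes\GL_2(\Bbbk)$ are not (they contain a non-trivial unipotent normal subgroup). For each singular Du Val del Pezzo surface in the Big Table, $\Aut^0(X)$ is built from copies of $\Ga$, $\Gm$, and $\PGL_2(\Bbbk)$ by semidirect products, so testing reductivity amounts to checking whether the unipotent part is non-trivial, i.e.\ whether a $\Ga$-factor appears in the Levi decomposition.

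The second step is to verify that the $23$ cases \ref{d=2:A7}, \ref{d=3:E6}, \ref{d=3:A5-A1}, \ref{d=3:A5}, \ref{d=4:D5}, \ref{d=4:A3-2A1}, \ref{d=4:D4}, \ref{d=4:A4}, \ref{d=4:A3-A1}, \ref{d=4:A3-4lines}, \ref{d=5:A4}, \ref{d=5:A3}, \ref{d=5:A2-A1}, \ref{d=5:A2}, \ref{d=6:A2-A1}, \ref{d=6:A2}, \ref{d=6:2A1}, \ref{d=6:A1-3l}, \ref{d=6:A1-4l}, \ref{d=7}, \ref{d=7:smooth}, \ref{d=8}, \ref{d=8:F1} are precisely the rows of the Big Table whose $\Aut^0$ contains a non-trivial unipotent radical, and conversely that for every other row of the table the group $\Aut^0(X)$ is a product of tori and semisimple factors (typically $\Gm^r$, $\PGL_2(\Bbbk)$, or products thereof), hence reductive. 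Since the Big Table is the output of the Main Theorem, this is a finite combinatorial check.

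The only genuine difficulty would lie in the classification itself, which is already subsumed in the Main Theorem; the corollary is then a matter of bookkeeping. A mild subtlety worth flagging is that for non-connected $\Aut(X)$ one must make sure that ``reductive'' is interpreted as reductivity of $\Aut^0(X)$, and that no exotic $\Ga$-action slips in through a non-trivial extension by the component group. This is handled by the fact that each row of the Big Table gives an explicit description of $\Aut^0(X)$ as a closed subgroup of an automorphism group of an ambient toric variety, so its Levi decomposition is visible, and thus the non-reductive list is completely determined.
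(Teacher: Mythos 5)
Your proposal is correct and matches the paper's treatment: the paper states this corollary as an immediate consequence of the classification, with the non-reductive cases being exactly the rows of Big Table whose $\Aut^0(X)$ contains a non-trivial unipotent radical (a $\Ga$-factor, a Borel $\BB_n$, a $\UU_3$-factor, or $\Ga^k\rtimes(\cdot)$), and your observation that $R_u(\Aut(X))=R_u(\Aut^0(X))$ disposes of the non-connectedness issue. Nothing further is needed.
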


Thus, the~surfaces listed in this corollary are not~$K$-polystable \cite{AlperBlumHalpernLeistnerXu,Matsushima}, which is known (see \cite{OdakaSpottiSun}).

\begin{corollary}
Let $X$ be a Du Val del Pezzo surface.
\begin{enumerate}
\item
If $K_X^2=1$ and $\Aut(X)$ is infinite, then $\uprho(X)=1$.
\item
If $K_X^2>1$ and $\uprho(X)=1$, then $\Aut(X)$ is infinite.
\item
If $K_X^2\geqslant 6$ or $K_X^2=5$ and $X$ is singular, then $\Aut(X)$ is infinite.
\end{enumerate}
\end{corollary}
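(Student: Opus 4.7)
The plan is to deduce all three statements from the Main Theorem by inspecting the Big Table, supplemented in parts (ii) and (iii) by the standard enumeration of Du Val del Pezzo surfaces by degree and singularity type.

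For part (i), I would restrict to those rows of the Big Table whose degree column reads $K_X^2=1$. The Picard number $\rho(X)$ can be read off from the minimal resolution $\widetilde X\to X$: since $\widetilde X$ is a weak del Pezzo of degree $1$ one has $\rho(\widetilde X)=9$, whence $\rho(X)=9-r$ where $r$ is the total rank of the Dynkin diagram of Du Val singularities of $X$. A direct check that $r=8$ in every degree-$1$ row finishes this part.

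For part (ii), by the Main Theorem it is enough to exhibit every Du Val del Pezzo $X$ with $\rho(X)=1$ and $d:=K_X^2>1$ as a row of the Big Table. The same resolution argument shows that $\rho(X)=1$ is equivalent to the total rank of Du Val singularities equalling $9-d$. Since the classes of $(-2)$-curves on $\widetilde X$ form a sub-root-system of the root system attached to a smooth del Pezzo of degree $d$ (i.e.\ $E_{9-d}$ in the relevant range), this leaves only finitely many singularity configurations per degree. I would enumerate them one degree at a time, matching each candidate with its entry in the table: $\PP(1,1,2)$ in degree $8$, $\PP(1,2,3)$ in degree $6$, the $A_4$-model in degree $5$, the $D_5$-model in degree $4$, the $E_6$-model in degree $3$, the rank-$7$ configurations ($E_7$, $A_7$, $D_6+A_1$, $A_5+A_2$, $D_4+3A_1$, \dots) in degree $2$, and similarly for degree $1$.

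For part (iii), by the same device it suffices to show that every Du Val del Pezzo of degree at least $6$, and every singular Du Val del Pezzo of degree $5$, appears in the Big Table. I would enumerate these degree by degree: $d=9$ gives only $\PP^2$; $d=8$ gives $\PP^1\times\PP^1$, the Hirzebruch surface $\FF_1$, and the quadric cone $\PP(1,1,2)$; $d=7$ gives the smooth surface together with its unique $A_1$-degeneration; for $d=6$ one runs through the sub-diagrams of the degree-$6$ root system $A_1+A_2$; and for singular $d=5$ one runs through the non-empty sub-diagrams of $A_4$. Each resulting surface is visible in the Big Table. The main technical obstacle is not conceptual but enumerative: one must ensure the classification of Du Val del Pezzo surfaces of a given degree by their Du Val singularity type is complete and is correctly paired against the Big Table; once this bookkeeping is done, the three assertions all drop out of the Main Theorem.
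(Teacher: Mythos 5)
Your overall strategy -- read parts (i)--(iii) off from Big Table via the Main Theorem, supplementing (ii) and (iii) with the known degree-by-degree classification of Du Val del Pezzo surfaces -- is exactly how the paper intends this corollary to be obtained (it is stated without proof as a consequence of the classification), and part (i) is complete as you describe it: every degree-$1$ row of Big Table has a rank-$8$ singularity configuration, hence $\uprho(X)=1$.

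There is, however, one genuine gap in the way you propose to carry out (ii) and (iii): you match candidates against Big Table \emph{by degree and singularity type}, but a Du Val del Pezzo surface is in general not determined by these data. This is precisely the content of Remark~\ref{remark:ambiguity}: for instance, there are two non-isomorphic degree-$1$ surfaces with a single $\type{E}_8$ point (both with $\uprho=1$), only one of which, namely \ref{d=1:E8}, has infinite automorphism group; a cubic with one $\type{D}_4$ point need not be \ref{d=3:D4}; and in degree $6$ the type $\type{A}_1$ corresponds to the two distinct surfaces \ref{d=6:A1-3l} and \ref{d=6:A1-4l}, distinguished by $\numl(X)$. So "each rank-$(9-d)$ configuration appears as a row of the table" does not by itself show that \emph{every surface} realizing that configuration is in the table -- and applied naively to $d=1$ your argument would "prove" the false statement that the second $\type{E}_8$ surface has infinite automorphisms. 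To close the gap you must additionally verify that, for each degree and type occurring in (ii) and (iii), \emph{all} isomorphism classes with that type lie in Big Table: for (iii) this is supplied by the paper's Section~4 (Lemmas~\ref{lemma:DP7}, \ref{corollary:d-6}, \ref{lemma:d=5} show that all surfaces of degree $7$ and $6$, and all singular ones of degree $5$, occur), and for (ii) one needs the uniqueness of the surface with $\uprho(X)=1$ and given type in each degree $2\leqslant d\leqslant 6$ (available from \cite{Ye2002}, or from Proposition~\ref{Proposition:Cl=Z} together with the torsion-cover argument of Lemma~\ref{lemma:Cl}\ref{lemma:Cl:tors} for the types with $\Cl(X)_{\tors}\neq 0$ such as $3\type{A}_2$, $\type{A}_32\type{A}_1$, $2\type{A}_3\type{A}_1$). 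With that uniqueness in hand your enumeration does go through; without it the "bookkeeping" you defer is exactly where the proof lives.
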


Many particular parts of our classification have been previously studied from different perspectives.
For examples, the~Du Val~del Pezzo surfaces
admitting an effective action of the~group $\Ga^2$ and $\Ga \rtimes \Gm$ have been classified in \cite{Derenthal2010,Derenthal-Loughran}.
The classification of toric Du Val del Pezzo surfaces is well-known for specialists (see e.g. \cite{Poonen}).
Du Val del Pezzo surfaces that admit a~faithful action of the~group $\Gm$ have been studied in \cite{Arz-De-Hau-Laf,Hausen2011,Herppich2014} in terms of their Cox rings.
Moreover, when we were finishing the~final version of this paper, we were informed that Main Theorem
has been independently proven in \cite{MartinStadlmayr} using completely different approach, which also works in positive characteristic.

Note that the~complete classification of \textit{all} Du Val~del Pezzo surfaces have been known for a long time \cite{DuVal,Dolgachev}.
The basic problem is that its is very huge and to choose surfaces with infinite automorphism group
typically takes a lot of efforts.

\begin{remark}
\label{remark:ambiguity}
Almost all surfaces in Big Table are explicitly given by their defining equations,
since they are not always uniquely determined by their degree and singularities.
For~example, the~cubic surface in $\PP^3$ given by
$$
x_3x_0^2+x_1^3+x_2^3+x_0x_1x_2=0
$$
has one singular point of type $\type{D}_4$, its automorphism group is finite,
and it is not isomorphic to the~cubic surface~\ref{d=3:D4}, which has the~same singularity.
Similarly, the~quartic surface in the~weighted projective space $\PP(1,1,1,2)$ that is given by the~equation
$$
y_2^2=y_1^3y_1''+y_1'^4+y_1'^2y_1^2
$$
is a~del Pezzo surface of degree $2$ that has one singular point of type $\type{E}_6$.
It is not isomorphic to the~del Pezzo surface~\ref{d=2:E6}, which has the~same degree and the~same singularity.
There~are more examples like this:
the~surface~\ref{d=1:E8} and the~sextic surface in $\PP(1,1,2,3)$ given by
$$
y_3^2=y_2^3+y_1'y_1^5+y_2^2y_1^2
$$
are the~only del Pezzo surfaces of degree $1$ with singular point of type $\type{E}_8$.
They are not isomorphic.
In fact, the~latter surface is the~only Du Val del Pezzo surface whose class group~is~$\ZZ$ that does not appear in our Big Table (see Remark~\ref{remark:d-1-E8}).
\end{remark}

Let us briefly describe the~structure of this paper.
In Section~\ref{section:preliminaries}, we present several basic facts about Du Val del Pezzo surfaces
which are used in the~proof of Main Theorem.
In Section~\ref{section:Fano--Weil}, we prove Theorem~\ref{theorem:index>1}, which together with Main Theorem imply

\begin{corollary}
\label{corollary:ind-Aut}
Let $X$ be a~Du Val del Pezzo surface with $K_X^2\geqslant 3$, and~let $\ind(X)$ be its Fano--Weil index.
Suppose that $\ind(X)>1$. Then $\Aut(X)$ is infinite.
\end{corollary}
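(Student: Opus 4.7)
The plan is to apply Theorem~\ref{theorem:index>1}, which classifies the Du Val del Pezzo surfaces with Fano--Weil index exceeding $1$, and then to verify, surface by surface, that every entry of that classification satisfying $K_X^2\geqslant 3$ appears in the Big Table of Section~\ref{section:tables}. Once this matching is complete, the Main Theorem immediately gives that $\Aut(X)$ is infinite.

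First I would restrict the output of Theorem~\ref{theorem:index>1} to the relevant range. Writing $-K_X\sim \ind(X)\cdot L$ with $L$ a Weil divisor and using $K_X^2=\ind(X)^2L^2$, the hypotheses $K_X^2\geqslant 3$ and $\ind(X)\geqslant 2$ force $\ind(X)\in\{2,3\}$, and the value $\ind(X)=3$ forces $X\cong\PP^2$ (which is row~\ref{d=8} of the Big Table, so to speak, the degree-$9$ entry). The remaining case $\ind(X)=2$ leaves a short, explicit list whose degrees, Du Val singularity types and defining equations are produced by Theorem~\ref{theorem:index>1}.

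Second I would identify each surface on this short list with a specific row of the Big Table. In most cases the pair (anticanonical degree, Du Val singularity type) already singles out the correct row; where it does not --- compare Remark~\ref{remark:ambiguity}, which shows that these invariants are in general insufficient --- I would compare the equations supplied by Theorem~\ref{theorem:index>1} with those listed in the Big Table to pin down the isomorphism class. Since every row of the Big Table has infinite automorphism group by the Main Theorem, this completes the argument.

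The main obstacle is precisely this case-by-case bookkeeping together with the genuine ambiguity flagged by Remark~\ref{remark:ambiguity}: one must be careful not to confuse the higher-index surface with a Du Val del Pezzo of the same degree and singularity type that does not occur in the Big Table. A useful independent sanity check is that the higher-index Du Val del Pezzos with $K_X^2\geqslant 3$ are closely tied to weighted projective planes and their small modifications (e.g.\ $\PP(1,1,2)$ realises the quadric cone of degree $8$), all of which visibly carry a torus action, so that $\Aut^0(X)$ is positive-dimensional directly from the geometric model.
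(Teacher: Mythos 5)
Your overall route --- feed Theorem~\ref{theorem:index>1} into the Big Table and invoke Main Theorem --- is exactly how the paper obtains this corollary, and indeed the table in Theorem~\ref{theorem:index>1} already labels each singular surface by its Big Table row, so the matching step is essentially done for you. However, your ``first step'' contains a concrete error that would derail the case analysis. You treat $L^2$ as if it were a positive integer in $K_X^2=\ind(X)^2L^2$, but $L$ is only a Weil divisor on a singular surface, so $L^2$ is merely a positive rational number; the inequality $K_X^2\geqslant 3$ therefore places no bound of the form $\ind(X)\leqslant 3$. The table of Theorem~\ref{theorem:index>1} itself refutes your claim: it contains surfaces with $\ind(X)=4$ (the quartics~\ref{d=4:D5}, \ref{d=4:A3-2A1}, \ref{d=4:A3-A1}, where $L^2=\frac14$), with $\ind(X)=5$ (the quintic~\ref{d=5:A4}, $L^2=\frac15$), and with $\ind(X)=6$ (the sextic~\ref{d=6:A2-A1}, i.e.\ $\PP(1,2,3)$, $A^2=\frac16$); moreover $\ind(X)=3$ certainly does not force $X\cong\PP^2$, since six of the cubics in that table have index~$3$. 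Executing your plan as written would discard most of the classification and leave those cases unverified.

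The repair is simply to delete the restriction step: for singular $X$ with $d\geqslant 3$ and $\ind(X)>1$, Theorem~\ref{theorem:index>1} gives the complete finite list, each entry of which is a row of the Big Table; for smooth $X$ the only surfaces with $\ind(X)>1$ are $\PP^2$ and $\PP^1\times\PP^1$, which are rows~\ref{d=9:P2} and~\ref{d=8:P1-P1}. One further caution about your closing ``sanity check'': it is not true that all these surfaces carry a torus action --- the surfaces~\ref{d=3:A5} and~\ref{d=4:A3-4lines} have $\Aut^0(X)\cong\Ga$ and admit no effective $\Gm$-action --- so positive-dimensionality of $\Aut^0(X)$ cannot be read off from a toric model alone and really does require the classification.
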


In Section~\ref{section:dP-large-degree1},
we prove Main Theorem for del Pezzo surfaces of degree $\geqslant 4$.
Then, in Sections~\ref{section:deg-1}, \ref{section:deg-2},~\ref{section:cubic-surfaces}, we prove Main Theorem for del Pezzo surfaces of degree $1$, $2$, $3$, respectively.
In Section~\ref{section:tables}, we present Big Table.
In Appendix~\ref{section:appendix}, we describe lines on del Pezzo surfaces that appear in Big Table
together with the~dual graphs of the~curves with negative self-intersection numbers on their minimal resolutions.
Finally, in Appendix~\ref{section:appendixb},
we will recall classification of Du Val del Pezzo surfaces whose Weil divisor class group is cyclic,
and present an alternative proof of Main Theorem for them.

\subsection*{Notations.}
Throughout this paper, we will use the~following notation:
\begin{itemize}
\item
$\mumu_n$ is a~cyclic subgroup of order $n$.
\item
$\Ga$ is a~one-dimensional unipotent additive group.
\item
$\Gm$ is a~one-dimensional algebraic torus.
\item
$\BB_n$ is a~Borel subgroup of $\PGL_n(\Bbbk)$.
\item
$\UU_n$ is a~maximal unipotent subgroup of $\PGL_n(\Bbbk)$.
\item
$\Ga\rtimes_{(n)} \Gm$ is a~semidirect product $\Ga$ and $\Gm$ such that $\Gm$ acts on $\Ga$ as $\mathbf x \mapsto t^n \mathbf x$.
This~group is isomorphic to the~following group:
$$
\left\{\left.\left(
\begin{array}{ccc}
t^r&0 \\
a&t^{s} \\
\end{array}
\right)\in\GL_{2}(\Bbbk)\ \right|\ t\in\Bbbk^{\ast}\ \text{and}\ a\in\Bbbk\right\},
$$
where $n=s-r$. Indeed, the required isomorphism follows from
$$
\left(\begin{array}{ccc}
t^r&0 \\
0&t^{s} \\
\end{array}
\right)
\left(\begin{array}{ccc}
1&0 \\
a&1 \\
\end{array}
\right)
\left(\begin{array}{ccc}
t^r&0 \\
0&t^{s} \\
\end{array}
\right)^{-1}=
\left(\begin{array}{ccc}
1&0 \\
at^{s-r}&1 \\
\end{array}
\right).
$$
Observe that~$\Ga\rtimes_{(0)} \Gm=\Ga\times\Gm$, $\Ga\rtimes_{(1)} \Gm=\BB_2$ and
$$
\Ga\rtimes_{(n)} \Gm\cong \Ga\rtimes_{(-n)} \Gm.
$$
Therefore, we will always assume that $n\geqslant 0$.
If~$n>0$, the~center of $\Ga\rtimes_{(n)} \Gm$ is $\mumu_n$.
This implies that $\Ga\rtimes_{(n_1)} \Gm\cong\Ga\rtimes_{(n_2)} \Gm$ $\iff$ $n_1=\pm n_2$.

\item
$\FF_n$ is the~Hirzebruch surface.
\item
$\PP(a_1,\dots, a_n)$ is the~weighted projective space.
\item
For a~weighted projective space $\PP(a_0,a_1,\ldots,a_n)$, we denote by $y_{a_0},y_{a_1},\ldots,y_{a_n}$ the~coordinates on it of
weights $a_0,a_1,\ldots,a_n$, respectively.
% For a~product of weighted projective spaces, we denote the~coordinates on the~first factor by $x_0,x_1,\ldots,x_n$,
% we denote the~coordinates on the~second factor by $y_0,y_1,\ldots,y_m$,
% and we denote the~coordinates on the~third factor (if any) by $z_0,z_1,\ldots,z_k$.

\item
For a~variety $X$, we denote by $\Sing(X)$ the~set of its singular points.
\item
For a~variety $X$, we denote by $\uprho(X)$ the~rank of the~Weil divisor class group $\Cl(X)$.
\item
For a~variety $X$ and its (possibly reducible) reduced subvariety $Y\subseteq X$, $\Aut(X,Y)$ denotes the~group consisting of all automorphisms  in $\Aut(X)$ that maps $Y$ into itself.

\item
For a~surface $X$ with Du Val singularities, $\Type(X)$ denotes the~type of its singularities.
If~$\Type(X)=\type{D}_42\type{A}_1$, then $\Sing(X)$ consists of a~point of type $\type{D}_4$,
and $2$ points of type~$\type{A}_1$.
\item
For a~Du Val del Pezzo surface $X$, $\ind(X)$ denotes its Fano--Weil index, which is defined as follows:
$$
\ind(X)=\max\Big\{ t\in \ZZ \ \big\vert\ -K_X\sim tA,\ \text{where $A$ is a~Weil divisor on $X$}\Big\}.
$$
\end{itemize}

\subsection*{Acknowledgments.}
This work was supported by the~Royal Society grant No. IES\textbackslash R1\textbackslash 180205 and by the~Russian Academic Excellence Project 5-100.

The authors would like to thank the anonymous referee for many valuable advices that helped to improve this paper.

\section{Del Pezzo surfaces with Du Val singularities}
\label{section:preliminaries}

Let $X$ be a~Du Val del Pezzo surface with $d:=K_X^2$.
Then $d$ is known as the~degree of the~surface~$X$.
Let $\mu\colon\widetilde{X}\to X$ be the~minimal resolution of singularities.
Then
$$
K_{\widetilde{X}}\sim\mu^*K_X,
$$
so that $\widetilde{X}$ is a~\emph{weak del Pezzo surface}, that is, the~anticanonical divisor $-K_{\widetilde X}$ is nef and big.
By~the~Noether formula $d=10-\uprho(X)\leqslant 9$ and by the~genus formula
every irreducible curve on $\widetilde{X}$ with negative \mbox{self-intersection} number is either $(-1)$ or $(-2)$-curve.
Moreover, one of the~following holds (see \cite{Brenton1980,Hidaka1981}):
\begin{enumerate}
\item
$K_X^2=9$ and $\widetilde{X}\cong X\cong\PP^2$;
\item
$K_X^2=8$ and $\widetilde{X}\cong X\cong\FF_1$;
\item
$K_X^2=8$ and $\widetilde{X}\cong X\cong\PP^1\times\PP^1$;
\item
$K_X^2=8$, $\widetilde{X}\cong\FF_2$ and $X$ is a~quadric cone in $\PP^3$;
\item
$K_X^2\leqslant 7$ and there exists a~$\Aut^0(X)$-equivariant diagram
\begin{equation}
\label{equation:min-resolution}
\vcenter{
\xymatrix@R=0.8em{
&\widetilde{X}\ar[dr]^{\mu}\ar[dl]_{\varphi}&\\
\PP^2&&X.}}
\end{equation}
where $\varphi$ is a~suitable contraction of $(-1)$-curves.
\end{enumerate}
Moreover, it follows from the~Kawamata--Viehweg vanishing and the~exponential exact sequence that the~group $\Pic(X)$ is torsion free.

\begin{corollary}
\label{corollary:connected-group-rational-surface}
Let $G$ be a~connected algebraic subgroup in $\Aut(X)$.
Suppose that $d=K_X^2\leqslant 7$. Then $G$ is isomorphic to a~subgroup of the~following group:
$$
\left \{\left.\begin{pmatrix}
a_{11}&0&0\\
a_{21}&a_{22}&0\\
a_{31}&a_{32}&1
\end{pmatrix}\in\GL_3(\Bbbk)\ \right|\ a_{ij}\in\Bbbk, a_{11}\ne 0, a_{22}\neq 0\right\}\cong\UU_3\rtimes\Gm^{2}.
$$
In particular, the~group $G$ is solvable.
If $G$ is reductive and non-trivial, then $G\cong\Gm$~or~$G\cong\Gm^2$.
Similarly, if $G$ is unipotent, then $G\cong\Ga$ or $G\cong \Ga^2$ or $G\cong\UU_3$.
\end{corollary}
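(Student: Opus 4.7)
The plan is to exploit the $\Aut^0(X)$-equivariant contraction $\varphi\colon \widetilde{X}\to\PP^2$ provided by case (v) of the excerpt, which applies since $d\leqslant 7$. Because $G$ is connected we have $G\subseteq\Aut^0(X)$, and because both the minimal resolution $\mu$ and the contraction $\varphi$ are $\Aut^0(X)$-equivariant, the action of $G$ on $X$ lifts to $\widetilde{X}$ and descends to $\PP^2$, yielding a homomorphism $\psi\colon G\to\Aut(\PP^2)=\PGL_3(\Bbbk)$. I would check that $\psi$ is injective: any $g\in\ker\psi$ acts trivially on $\PP^2$ and therefore preserves each fibre of $\varphi$ setwise; since $\varphi$ is birational, the generic fibre is a single point, so $g$ is the identity on a dense open subset of $\widetilde{X}$, hence on all of $X$.

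Next I would show that $\psi(G)$ stabilises a flag in $\PP^2$. The morphism $\varphi$ is the composition of $9-d\geqslant 2$ point blow-ups, possibly infinitely near. Let $Z\subseteq\PP^2$ denote the set of images in $\PP^2$ of the exceptional curves of $\varphi$. Then $Z$ is a finite nonempty $G$-invariant subset, and $G$ being connected must fix $Z$ pointwise. If $|Z|\geqslant 2$, two distinct $G$-fixed points $p_1,p_2\in\PP^2$ determine a $G$-invariant line through them and we are done. If $|Z|=1$, all blow-ups lie above a single $G$-fixed point $p_1\in\PP^2$; then at least one further blow-up must be centred on the first exceptional curve $E_1\cong\PP^1$ over $p_1$, and the finite nonempty $G$-invariant set of such centres on $E_1$ contains a $G$-fixed point, corresponding to a tangent direction at $p_1$, i.e.\ to a $G$-invariant line through $p_1$. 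In either case $\psi(G)$ preserves a flag (a point and a line through it) in $\PP^2$, and so it is contained in the corresponding Borel subgroup of $\PGL_3(\Bbbk)$; up to conjugation this Borel is exactly the matrix group displayed in the statement, which is isomorphic to $\UU_3\rtimes\Gm^2$.

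From this description the three consequences follow by standard structure theory. Borel subgroups are solvable, so $G$ is solvable. If $G$ is reductive, its image is a connected reductive subgroup of the solvable group $\UU_3\rtimes\Gm^2$, hence a torus contained (up to conjugation) in the maximal torus $\Gm^2$; the only connected closed subtori are $\{e\}$, $\Gm$, and $\Gm^2$. If $G$ is unipotent, then $G\subseteq\UU_3$, which is the Heisenberg group. A brief bracket computation in its Lie algebra shows that every $2$-dimensional Lie subalgebra contains the $1$-dimensional centre and is therefore abelian, leaving only $\Ga$, $\Ga^2$, and $\UU_3$ as possibilities for nontrivial connected closed subgroups.

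The main obstacle is the flag-producing case analysis: when all exceptional divisors of $\varphi$ collapse to a single point of $\PP^2$, one must pass to the first blow-up and use the $G$-invariant finite set of subsequent blow-up centres on the exceptional $\PP^1$ to produce a $G$-fixed tangent direction at $p_1$. Once the flag is in hand, everything reduces to the Borel structure of $\PGL_3(\Bbbk)$ and the subgroup structure of the Heisenberg group $\UU_3$.
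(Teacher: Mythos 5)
Your argument is correct and follows exactly the route the paper intends: the paper's proof is the single sentence that the statement ``follows from the fact that the diagram \eqref{equation:min-resolution} is $G$-equivariant,'' and your proposal simply supplies the omitted details (injectivity of $G\to\PGL_3(\Bbbk)$, the production of an invariant flag from the blow-up centres, and the identification of the flag stabilizer with the displayed group $\cong\BB_3\subset\PGL_3(\Bbbk)$). The subgroup classification via the maximal torus $\Gm^2$ and the Heisenberg group $\UU_3$ is also the standard argument the authors leave to the reader.
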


\begin{proof}
This follows from the~fact that the~diagram~\eqref{equation:min-resolution} is $G$-equivariant.
\end{proof}

\begin{example}[{\cite[Proposition~8.1]{Coray1988}}]
\label{example:dP7-A1}
Suppose that $d=7$ and $X$ is singular. The surface $X$~is~unique.
The morphism $\varphi$ is the~blow up of two points, $\widetilde{X}$ contains unique $(-2)$-curve, and $\Type(X)=\type{A}_1$.
The surface $X$ contains two $(-1)$-curves. The dual graph of curves with negative self-intersection numbers
has the~form
$$
\xymatrix{
\circ\ar@{-}[r]&\bullet\ar@{-}[r]&\bullet}
$$
where $\bullet$ denotes a~$(-1)$-curve, and $\circ$ denotes the~$(-2)$-curve. Using \eqref{equation:min-resolution}, we see that
$$
\Aut^0(X)\cong \Aut(\PP^2,\ell, P)\cong \BB_3,
$$
where $\ell$ is a~line on $\PP^2$, and $P$ is a~point in $\ell$. Note that $X$ is the~surface~\ref{d=7}.
\end{example}

The~type $\Type(X)$ does not always determine the~dual graph of
curves with negative self-intersection numbers~in~$\widetilde{X}$.
However, this graph is always determined by the~type $\Type(X)$ and the~number of $(-1)$-curves~in~$\widetilde{X}$.
In the~following, we denote by $\numl(X)$ the~number of \mbox{$(-1)$-curves} in~the~surface $\widetilde{X}$.

\begin{example}
\label{example:dP6-A1}
If $d=6$ and $\Type(X)=\type{A}_1$, then $X$ is one of the~surfaces~\ref{d=6:A1-3l} or~\ref{d=6:A1-4l} in Big~Table.
If $X$ is the~surface~\ref{d=6:A1-3l}, then $\numl(X)=3$. On the~other hand, if $X$ is the~surface~\ref{d=6:A1-4l}, then $\numl(X)=4$.
The dual graph of curves with negative self-intersection numbers in $\widetilde{X}$ is given in Appendix~\ref{section:appendix}.
\end{example}

Using the~Riemann--Roch formula and Kawamata--Viehweg vanishing, we get $\dim |-K_X|=d$.
Let $\Phi\colon X\dasharrow\PP^d$ be the~rational map given by $|-K_X|$.
The linear system $|-K_X|$ does not have fixed components, and it contains a~smooth elliptic curve (see \cite{Demazure1980}).
Using this fact, one can prove

\begin{theorem}[{\cite{Demazure1980,Hidaka1981}}]
\label{theorem:delPezzo}
The following assertions hold:
\begin{enumerate}
\item
if $d\geqslant 2$, then $|-K_X|$ is base point free, so that $\Phi$ is a~morphism;
\item
if $d\geqslant 3$, then $-K_X$ is very ample, so that $\Phi$ is an embedding;

\item
if $d=3$, then $\Phi(X)$ is a~cubic surface in $\PP^3$;

\item
if $d\geqslant 4$, then $\Phi(X)$ is an intersection of quadrics in $\PP^{d}$;

\item
if $d=2$, then $\Phi$ is a~double cover that is branched over a~possibly reducible quartic curve,
so~that $X$ is a~hypersurface in $\PP(1,1,1,2)$ of degree $4$;

\item
if $d=1$, then $|-K_X|$ is an elliptic pencil,
its base locus consists of one point $O\notin\Sing(X)$,
and every curve in $|-K_X|$ is irreducible and smooth at $O$;

\item
if $d=1$, then $|-2K_X|$ defines a~double cover $X\to \PP(1,1,2)$ branched over \mbox{a~sextic~curve},
so~that $X$ is a~hypersurface in $\PP(1,1,2,3)$ of degree~$6$.
\end{enumerate}
\end{theorem}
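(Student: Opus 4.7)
The plan is to pass to the minimal resolution $\mu\colon\widetilde{X}\to X$ and exploit the fact that $-K_{\widetilde{X}}=\mu^{*}(-K_{X})$ is nef and big with the same graded ring of sections as $-K_X$. Kawamata--Viehweg vanishing gives $H^{i}(\widetilde{X},-nK_{\widetilde{X}})=0$ for $i>0$ and $n\geqslant 0$; combined with Riemann--Roch this yields $h^{0}(-K_{X})=d+1$, so $\dim|-K_X|=d$ as stated. Next, fix a smooth elliptic curve $C\in|-K_{\widetilde{X}}|$ (whose existence is recalled just before the theorem) and consider the restriction sequence
\begin{equation*}
0\to\mathcal{O}_{\widetilde{X}}\to\mathcal{O}_{\widetilde{X}}(-K_{\widetilde{X}})\to\mathcal{O}_{C}(d)\to 0.
\end{equation*}
Since $\widetilde{X}$ is rational, $H^{1}(\mathcal{O}_{\widetilde{X}})=0$, and the restriction on global sections is surjective; analogous sequences for higher powers of $-K_{\widetilde{X}}$ will control $|-nK_{\widetilde X}|$.

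Assertions (i), (ii), (vi) then follow from classical facts on elliptic curves. A line bundle of degree $d$ on $C$ is base point free for $d\geqslant 2$ and very ample for $d\geqslant 3$; combined with the fact that $|-K_{\widetilde{X}}|$ has no fixed components and that $C$ moves to sweep out $\widetilde{X}$, this yields (i) and (ii). When $d=1$, $\mathcal{O}_{C}(1)$ has a unique base point $O\in C$, which is also the only base point of $|-K_{\widetilde X}|$; since each $(-2)$-curve $E\subset\widetilde{X}$ satisfies $-K_{\widetilde{X}}\cdot E=0$ and is therefore disjoint from a general anticanonical section, no $(-2)$-curve passes through $O$, so $O$ lies in the smooth locus of $X$, giving (vi). For the images in (iii)--(iv), once $\Phi$ is a closed embedding, $\Phi(X)\subset\PP^d$ has degree $(-K_{X})^2=d$: this is exactly a cubic surface when $d=3$, and when $d\geqslant 4$ the surface $\Phi(X)$ is of almost minimal degree, so a Koszul argument based on the vanishing of $H^{1}(-2K_X-H)$ for a hyperplane class $H$ shows that its ideal is generated by quadrics.

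For (v) and (vii), the plan is to determine the anticanonical ring $R(X)=\bigoplus_{n\geqslant 0}H^{0}(X,-nK_{X})$ by computing each graded piece via the vanishing above and by analyzing the multiplication maps. When $d=2$, one finds that the subring generated by $H^{0}(-K_X)$ captures $R(X)$ in degree $\leqslant 1$ but is missing one new generator $y_{2}$ in degree $2$, with the only relation occurring in degree $4$ and having the form $y_{2}^{2}=F_{4}(y_{1},y_{1}',y_{1}'')$; this realizes $X$ as a quartic in $\PP(1,1,1,2)$ and exhibits $\Phi$ as a double cover of $\PP^{2}$ branched over $\{F_{4}=0\}$. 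For $d=1$, a parallel analysis produces extra generators in degrees $2$ and $3$ and a degree-$6$ Weierstrass relation $y_{3}^{2}=y_{2}^{3}+\cdots$, simultaneously proving (vii) and the existence of the double cover $X\to\PP(1,1,2)$ of degree $2$ defined by $|-2K_X|$.

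The main obstacle will be this last step: verifying that the extra generators in degrees $2$ and $3$ are algebraically independent from the lower-degree sections and that the defining relation takes the stated Weierstrass form. This requires an inductive analysis of the multiplication maps $\mathrm{Sym}^{m}H^{0}(-K_X)\to H^{0}(-mK_X)$ and their cokernels, combined with the geometric input from (vi) (to guarantee that the new degree-$3$ generator does not vanish identically on the elliptic pencil) and a reducedness check on the branch locus (to exclude non-reduced quartic or sextic branch divisors).
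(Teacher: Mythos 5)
Your plan is the standard argument from the cited sources (Demazure, Hidaka--Watanabe), and it is exactly the route the paper indicates just before the statement: compute $\dim|-K_X|=d$ via Riemann--Roch and Kawamata--Viehweg vanishing, restrict to a smooth elliptic curve in $|-K_{\widetilde X}|$ to deduce freeness, very ampleness, and the base-point structure from classical facts about line bundles of degree $d$ on an elliptic curve, and then determine the (bi)anticanonical ring for $d\leqslant 2$. The paper itself offers no independent proof beyond this citation, so there is nothing to contrast; your sketch is correct and matches the intended approach.
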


The number of $(-1)$-curves in $\widetilde{X}$ is finite.

\begin{definition}
\label{def:line}
An irreducible curve $L\subset X$ is a~\emph{line} if $L=\mu(\widetilde{L})$ for a~$(-1)$-curve $\widetilde{L}\subset\widetilde{X}$.
\end{definition}

If $d\geqslant 3$, then lines in $X$ are usual (projective) lines in $\Phi(X)\subset \PP^{d}$.
Conversely, if $d\geqslant 3$, then lines in $\Phi(X)$ are lines in the~sense of Definition~\ref{def:line}.
Moreover, if $d=2$, then lines in $X$ are smooth rational curve.
Furthermore, if $d=1$ and $L$ is a~line in $X$, then
\begin{itemize}
\item either $L$ is singular curve in $|-K_X|$ such that $\Sing(L)\subset \Sing(X)$,
\item or $L$ is a~smooth rational curve that does not contain the~base point of the~pencil $|-K_X|$.
\end{itemize}
Note that $\numl(X)$ is the~number of lines in $X$. Then $\numl(X)>0$ unless $X$ is $\PP^2$, $\PP^1\times\PP^1$ or $\PP(1,1,2)$.

\begin{lemma}
\label{delPezzo:poin-lines}
Assume that $d\geqslant 3$.
Let $P$ be a~point in $X$, and let $\numl(X,P)$ be the~number of lines in $X$ passing through $P$.
\begin{enumerate}
 \item
If $P\in X$ is smooth, then
\[
\numl(X,P)\leqslant
\begin{cases}
3 & \text{if $d= 3$,}\\
2 &  \text{if $d\geqslant 4$.}
\end{cases}
\]
 \item
If $P\in X$ is singular, then
\[
\numl(X,P)\leqslant
\begin{cases}
6 & \text{if  $d=3$,}\\
4 & \text{if  $d=4$,}\\
3 &\text{if  $d\geqslant 5$.}
\end{cases}
\]
\end{enumerate}
\end{lemma}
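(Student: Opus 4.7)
The plan is to work on the minimal resolution $\mu\colon\widetilde X\to X$ and apply the Hodge index theorem to a suitable $\QQ$-divisor built from the strict transforms of the lines through $P$. Let $L_1,\ldots,L_k$ be the lines of $X$ through $P$ and $\widetilde L_i\subset\widetilde X$ their strict transforms; by definition each $\widetilde L_i$ is a $(-1)$-curve with $-K_{\widetilde X}\cdot\widetilde L_i=1$. Since $-K_{\widetilde X}$ is big and nef with $(-K_{\widetilde X})^2=d>0$, the Hodge index inequality $d\cdot D^2\leqslant(-K_{\widetilde X}\cdot D)^2$ holds for every $\QQ$-divisor $D$ on $\widetilde X$. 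The strategy is to produce such a $D$ with $(-K_{\widetilde X}\cdot D)^2=k^2$ and a controlled lower bound on $D^2$, then solve for $k$.

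For part (i), I would take $D=\sum_{i=1}^{k}\widetilde L_i$. Under the anticanonical embedding $X\hookrightarrow\PP^d$ (available since $d\geqslant 3$ by Theorem~\ref{theorem:delPezzo}), two distinct lines through the smooth point $P$ are distinct projective lines in $\PP^d$ with distinct tangent directions at $P$, so they meet transversally there; since $\mu$ is an isomorphism near $\mu^{-1}(P)$, this yields $\widetilde L_i\cdot\widetilde L_j\geqslant 1$ for $i\neq j$. Therefore $D^2\geqslant -k+k(k-1)=k(k-2)$ and $-K_{\widetilde X}\cdot D=k$, so Hodge gives $d\,k(k-2)\leqslant k^2$, i.e., $k\leqslant 2d/(d-1)$. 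This produces $k\leqslant 3$ for $d=3$ and $k\leqslant 2$ for $d\geqslant 4$, as claimed.

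For part (ii) the strict transforms $\widetilde L_i$ may be pairwise disjoint, being separated by the exceptional divisor over $P$, so the transversality lower bound is lost. I would compensate by augmenting $D$ with the reduced exceptional divisor: set $D=\sum_{i=1}^{k}\widetilde L_i+\alpha E$, where $E$ is the reduced exceptional divisor of $\mu$ over $P$ and $\alpha\in\QQ_{\geqslant 0}$ is to be optimized. Three facts are needed: $-K_{\widetilde X}\cdot E=0$ since the components of $E$ are $(-2)$-curves; $E^2=-2$ uniformly for every ADE type, because the dual graph of $E$ is a tree on $r$ vertices with $r-1$ edges, so $E^2=-2r+2(r-1)=-2$; and $\widetilde L_i\cdot E\geqslant 1$ because $L_i$ passes through $P$. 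Hence $-K_{\widetilde X}\cdot D=k$ and $D^2\geqslant -k+2\alpha k-2\alpha^2$, which is maximized at $\alpha=k/2$ with value $k(k-2)/2$. Hodge then yields $d\,k(k-2)/2\leqslant k^2$, i.e., $k\leqslant 2d/(d-2)$, giving $k\leqslant 6$ for $d=3$, $k\leqslant 4$ for $d=4$, and $k\leqslant 3$ for $d\geqslant 5$, matching the claim.

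The main subtlety is the singular case: one must use the exceptional configuration to replace the transversality input that drives the smooth case. The choice $\alpha=k/2$ is precisely what upgrades the near-trivial $D^2\geqslant -k$ into a sharp bound, and the uniform identity $E^2=-2$ across all ADE Dynkin diagrams is what makes the argument work simultaneously for every Du Val singularity type, avoiding any case analysis on the type at $P$.
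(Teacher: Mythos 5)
Your proof is correct, but it takes a genuinely different route from the paper. The paper works entirely inside the anticanonical embedding $X\hookrightarrow\PP^d$: all lines through $P$ lie in $X\cap\mathbb{T}_{P,X}$, where the embedded tangent space has dimension $2$ or $3$ according as $P$ is smooth or singular; the bounds then come from degree counts (plane section of a cubic, intersection with quadrics for $d\geqslant 4$, and for $d\geqslant 5$ a hyperplane through four lines and a general point forces a fifth line, a contradiction). You instead work on the minimal resolution and apply the Hodge index inequality $d\,D^2\leqslant(-K_{\widetilde X}\cdot D)^2$ to $D=\sum\widetilde L_i$ (smooth case) or $D=\sum\widetilde L_i+\tfrac{k}{2}E$ (singular case). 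I checked the ingredients: $\widetilde L_i\cdot\widetilde L_j\geqslant 1$ in the smooth case (in fact you do not even need transversality in $\PP^d$ — the two strict transforms already meet over $P$ since $\mu$ is an isomorphism there), $E^2=-2$ for every ADE configuration because Dynkin diagrams are trees, $\widetilde L_i\cdot E\geqslant 1$, and the optimization at $\alpha=k/2$; the resulting bounds $k\leqslant 2d/(d-1)$ and $k\leqslant 2d/(d-2)$ reproduce exactly the stated numbers in every case. Your argument is more intrinsic and uniform — it needs no case split on $d$, no explicit equation of the cubic, and no use of the fact that $X$ is an intersection of quadrics — while the paper's argument is more elementary projective geometry and yields, as a by-product, the concrete description of the union of lines through $P$ as $q_2=q_3=0$ in the cubic case. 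Both are complete proofs of the lemma.
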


\begin{proof}
Let $L_1,\dots,L_r$ be all the lines on $X$ passing through $P$, and let $\mathbb{T}_{P,X}\subset \PP^d$ be the embedded tangents space to $X$ at the point $P$.
Then
$$
\bigcup_{i=1}^{r} L_i\subseteq X\cap \mathbb{T}_{P,X}.
$$
If $P\in X$ is a smooth point, then $\dim \mathbb{T}_{P,X}=2$, so that $r\leqslant d$.
Moreover, if $d\geqslant 4$, then $r\leqslant 2$, because $X$ is an intersection of quadrics in this case.
Thus, we may assume that $P$ is a~singular point of the surface $X$.
Then $\dim \mathbb{T}_{P,X}=3$, since $P$ is a Du Val singular point of the surface~$X$.
Hence, if $d\geqslant 4$, then $r\leqslant 4$, because $X$ is an~intersection of quadrics in this case.

Suppose that $d=3$. We may assume that $P=(0:0:0:1)$. Then $X$ is given in $\mathbb{P}^3$ by
$$
x_3q_2(x_0,x_1,x_2)+q_3(x_0,x_1,x_2)=0,
$$
where $q_2$ and $q_3$ are homogeneous forms of degree $2$ and $3$, respectively.
Then the (set-theoretic) union of the lines $L_1,\ldots,L_r$ is given by the system of equations $q_2=q_3=0$, so that $r\leqslant 6$.

To complete the proof, we may assume that $d\geqslant 5$.
We only consider the case $d=5$, since the~proof is similar in the~remaining cases.
Let us show that $r\leqslant 3$.
To do this, suppose that~$r\geqslant 4$. Let us seek for a~contradiction.

Let  $Q$ be a~point in $X$ that is not contained in any line in $X$ (it exists since $\numl(X)$ is finite).
Keeping in mind that the~Zariski tangent space of the~surface $X$ at the~point $P$ is three-dimensional,
we conclude that there exists a~hyperplane $H$ in $\PP^5$ that contains the~lines $L_1$, $L_2$, $L_3$, $L_4$ and the~point $Q$.
Then
$$
H\big\vert_{X}=C+\sum_{i=1}^{4} L_i,
$$
where $C$ is a~curve in $X$ that passes through $Q$. Counting degrees, we see that $\deg(C)\leqslant 1$,
so that $C$ is a~line, which contradicts the~choice of the~point $Q$.
\end{proof}

\begin{lemma}
\label{lemma:lines}
Suppose that $d\leqslant 7$. For any singular point of $X$ there is a~line passing through it.
\end{lemma}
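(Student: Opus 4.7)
\smallskip

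\textbf{Plan.} Let $\Gamma=\mu^{-1}(P)$ be the reduced exceptional divisor above $P$, a connected configuration of $(-2)$-curves on $\widetilde{X}$. By Definition~\ref{def:line}, producing a line through $P$ is equivalent to producing a $(-1)$-curve on $\widetilde{X}$ that meets $\Gamma$. I will argue by contradiction, assuming no such $(-1)$-curve exists, and exploit the auxiliary morphism $\varphi\colon\widetilde{X}\to\PP^2$ of diagram~\eqref{equation:min-resolution}, which factors as a composition of $9-d\geqslant 2$ blow-ups of (possibly infinitely near) points.

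The first step is a structural claim: the $\varphi$-exceptional divisor above each blown-up point $Q\in\PP^2$ is a linear chain $F_1-F_2-\cdots-F_t$ of smooth rational curves with $F_i^2=-2$ for $i<t$ and $F_t^2=-1$. I would verify this by induction on the number of infinitely near blow-ups above $Q$: any blow-up at a point lying on an existing $(-2)$-curve, or at the intersection of two existing curves in the chain, would force some component to have self-intersection $\leqslant -3$, contradicting the nefness of $-K_{\widetilde X}$. Consequently every further infinitely near blow-up must take place at a point of the current terminal $(-1)$-curve away from its intersection with the preceding curve, which preserves both the linearity of the chain and the self-intersection pattern.

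Next I would carry out the case analysis. If some irreducible component of $\Gamma$ coincides with a $\varphi$-exceptional curve $F_i$ in a chain over some $Q$, then the $(-2)$-curves $F_1,\ldots,F_{t-1}$ are connected to $F_i$ through other $(-2)$-curves, hence all lie in the same connected component $\Gamma$; but then the $(-1)$-curve $F_t$ meets $F_{t-1}\in\Gamma$, contradicting our assumption. In the opposite case, every component $E$ of $\Gamma$ is the strict transform of an irreducible curve $C\subset\PP^2$; since $E^2=-2$ while $C^2\geqslant 1$, the curve $C$ has positive multiplicity at some blown-up point $Q$, and therefore $E$ meets the $\varphi$-exceptional chain above $Q$ in some component $F_l$. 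If $l=t$, this gives a $(-1)$-curve meeting $\Gamma$ immediately; if $l<t$, then $F_l$ is a $(-2)$-curve meeting $E\in\Gamma$, so $F_l\in\Gamma$, and we fall back into the first case---both branches yield a contradiction.

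The main technical obstacle is the structural claim on the $\varphi$-exceptional chains, particularly the no-branching property in the presence of several infinitely near centers; once this is in hand, the case analysis is essentially immediate. An alternative route would be a case-by-case verification through the full Du Val del Pezzo classification, but the strategy above is uniform in the degree $d$ and in the Du Val type of the singularity at $P$.
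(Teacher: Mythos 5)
Your argument is correct, and it is in essence a careful expansion of the paper's own (one-line) proof, which simply asserts that the claim ``follows from the existence of the diagram~\eqref{equation:min-resolution}'': you supply exactly the missing details, namely that nefness of $-K_{\widetilde X}$ forces each $\varphi$-exceptional fibre to be a linear chain of $(-2)$-curves capped by a $(-1)$-curve, and that every component of $\mu^{-1}(P)$ either lies in such a chain or is a strict transform of a plane curve and hence meets one. No gaps; the dichotomy and the proximity/self-intersection bookkeeping are all sound.
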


\begin{proof}
The required assertion follows from the~existence of the~diagram~\eqref{equation:min-resolution}.
\end{proof}

Since the~Du Val singularities are $\QQ$-factorial, $\uprho(X)$ is equal to the~rank of~the~Weil divisor class group $\Cl(X)$.
\begin{lemma}
\label{lemma:Cl}
Suppose that $d\leqslant 7$. Then the~following assertions hold.
\begin{enumerate}
\item
\label{lemma:Cl:gens}
The group $\Cl(X)$ is generated by the~classes of lines in $X$.

\item
\label{lemma:Cl:tors}
Let $\Cl(X)_{\tors}\subset \Cl(X)$ be the~torsion subgroup and let $n$ be the~order of the~group $\Cl(X)_{\tors}$.
There is a~Galois abelian cover $\pi\colon Y\to X$ of degree $n$
which is \'etale outside of $\Sing(X)$, where $Y$ is a~Du Val del Pezzo surface such that
$$
K_{Y}^2=d\, n,
$$
so that $n\leqslant \frac{9}{d}$.

\item
\label{lemma:Cl:tors-l}
If $\uprho(X)=1$ and $X$ contains two distinct lines $L$ and $L^\prime$, then $L\not\sim L^\prime$ and $L\simQ L^\prime$.

\item
\label{lemma:Cl:extr}
Every extremal ray of the~Mori cone $\NE(X)$ is generated by the~class of a~line.
\item
\label{lemma:Cl:convex}
For every effective divisor $D\in\Cl(X)$, there are $a_0,a_1,\ldots,a_r\in\ZZ_{\geqslant 0}$ such that
$$
D\sim a_0(-K_X)+\sum_{i=1}^{r}a_iL_i
$$
where $L_1,\ldots,L_r$ are lines in $X$, $r=\numl(X)$, and $a_0=0$ if $d\ne 1$.
\end{enumerate}
\end{lemma}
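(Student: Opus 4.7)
The plan is to handle all five parts uniformly by working on the minimal resolution $\mu\colon\widetilde X\to X$ and exploiting the surjection $\mu_*\colon\Pic(\widetilde X)\twoheadrightarrow\Cl(X)$ whose kernel is the sublattice generated by the $\mu$-exceptional $(-2)$-curves.

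For (i), I would use that $\widetilde X$ fits into the diagram~\eqref{equation:min-resolution} as an iterated blow-up of $\PP^2$ at $\geqslant 2$ (possibly infinitely near) points, so $\Pic(\widetilde X)$ is generated by the classes of all negative curves on $\widetilde X$, and on a weak del Pezzo these are exactly the $(-1)$- and $(-2)$-curves (the standard generation $H=(H-E_i-E_j)+E_i+E_j$ together with the exceptional divisors does the job even for infinitely near configurations). Applying $\mu_*$ kills the $(-2)$-curves and sends each $(-1)$-curve to a line on $X$, giving (i). For (iv), combine the ampleness of $-K_X$ (so every extremal ray of $\NE(X)$ is $K_X$-negative) with the standard description of $\NE(\widetilde X)$ as the cone generated by the $(-1)$- and $(-2)$-curves; since $\mu_*$ contracts the $(-2)$-curves, the extremal rays of $\NE(X)$ correspond exactly to the classes of images of $(-1)$-curves, i.e.\ to lines. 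Part (ii) is the classical torsion-cover construction: for each $\tau\in\Cl(X)_{\tors}$ choose a Weil representative $D_\tau$ together with a trivialization of $\OOO_X(\text{ord}(\tau)\cdot D_\tau)$, and set $Y=\operatorname{Spec}_X\bigoplus_\tau\OOO_X(D_\tau)$ with the multiplication coming from these trivializations. Since $\Pic(X)$ is torsion free, the cover $\pi$ is \'etale outside $\Sing(X)$; hence $K_Y=\pi^*K_X$ is anti-ample, $Y$ is Du Val del Pezzo, and $K_Y^2=nd\leqslant 9$.

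For (v), the plan is to upgrade (iv) to the integral semigroup level. Given an effective divisor $D$ on $X$, lift to the strict transform $\widetilde D$ on $\widetilde X$ plus a non-negative combination of $(-2)$-curves, and peel off negative components: whenever $\widetilde D\cdot C<0$ for some $(-1)$- or $(-2)$-curve $C$, the curve $C$ is a component of $\widetilde D$, and we subtract it and iterate. The remaining effective part is nef; for $d\geqslant 2$ it lies in the cone spanned by lines (using that $-K_{\widetilde X}$ is base-point-free and expressible as a sum of lines by Theorem~\ref{theorem:delPezzo}), while for $d=1$ the residual nef part is a multiple of $-K_{\widetilde X}$, contributing the $a_0(-K_X)$ summand via the elliptic pencil $|-K_{\widetilde X}|$. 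Pushing forward to $X$ collapses the $(-2)$-curve contributions and yields the stated integral expression.

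For (iii), the $\QQ$-equivalence $L\simQ L'$ is immediate since $\uprho(X)=1$ makes $\Cl(X)\otimes\QQ$ one-dimensional. For $L\not\sim L'$, assume toward contradiction $L\sim L'$ with $L\neq L'$; then $h^0(X,\OOO_X(L))\geqslant 2$, so $|L|$ is at least a pencil. Applying~(v) to a general $D\in|L|$ yields $D\sim\sum a_iL_i$ with $a_i\in\ZZ_{\geqslant 0}$ (no $-K_X$ summand since $d\neq 1$), and the equality $\sum a_i=D\cdot(-K_X)=L\cdot(-K_X)=1$ (the latter from $\widetilde L\cdot(-K_{\widetilde X})=1$) forces $D$ to be a single line, making $|L|$ finite—contradicting $\dim|L|\geqslant 1$. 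The main obstacle will be making (v) airtight, in particular justifying the claim that after peeling off negative components the residual nef class decomposes integrally into lines (plus $-K_X$ when $d=1$); a secondary subtlety is the $d=1$ sub-case of (iii), where members of $|-K_X|$ can enter $|L|$, which requires invoking Theorem~\ref{theorem:delPezzo}(vi) on the base point of $|-K_X|$ to separately exclude $L\sim -K_X$.
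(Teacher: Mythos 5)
Your treatment of parts \xref{lemma:Cl:gens}--\xref{lemma:Cl:extr} is essentially sound: for \xref{lemma:Cl:tors} you spell out the same torsion-cover construction the paper delegates to \cite{YPG}, for \xref{lemma:Cl:tors-l} you use the same ``a pencil of lines contradicts $\numl(X)<\infty$'' idea, and for \xref{lemma:Cl:gens} and \xref{lemma:Cl:extr} you argue on $\widetilde{X}$ via the surjection $\mu_*\colon\Pic(\widetilde X)\to\Cl(X)$ and the generation of $\NE(\widetilde X)$ by negative curves, whereas the paper deduces \xref{lemma:Cl:extr} from the classification of extremal contractions (Lemma~\xref{extremal-contractions}) and gets \xref{lemma:Cl:gens} as a byproduct of \xref{lemma:Cl:convex}. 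Those are acceptable alternative routes.

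The genuine gap is in \xref{lemma:Cl:convex}, exactly where you flag it. Your peeling procedure only removes irreducible negative curves $C$ with $\widetilde D\cdot C<0$, and it stalls the moment $\widetilde D$ becomes nef; at that point the assertion you still need --- that a nef \emph{effective} class on $\widetilde X$ is an \emph{integral} non-negative combination of $(-1)$-curves, $(-2)$-curves, and (for $d=1$) $-K_{\widetilde X}$ --- is precisely the content of the lemma, not something that follows from ``$-K_{\widetilde X}$ is base-point-free and a sum of lines.'' Knowing that the class lies in the cone spanned by lines only gives a rational non-negative combination; the integrality is a statement about the effective monoid and requires producing actual effective divisors at each subtraction step. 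The paper does this by induction on $\dim|\widetilde D|$: the base case $\dim|\widetilde D|=0$ forces $\widetilde D$ to be a $(-1)$- or $(-2)$-curve by Riemann--Roch and Kawamata--Viehweg vanishing; if $\widetilde D$ is nef with $\widetilde D^2=0$ it defines a conic bundle with a reducible fiber; if $\widetilde D$ is nef but not ample one finds an irreducible $\widetilde C$ with $\widetilde D\cdot\widetilde C=0$, hence $\widetilde C^2<0$ by the Hodge index theorem, and Riemann--Roch gives $\dim|\widetilde D-\widetilde C|\geqslant\dim|\widetilde D|-1\geqslant 0$; and if $\widetilde D$ is ample one first subtracts $-K_{\widetilde X}$ (using $|\widetilde D+aK_{\widetilde X}|\neq\varnothing$, again by Riemann--Roch and vanishing) until the class leaves the ample cone. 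Without some version of this descent your proof of \xref{lemma:Cl:convex} is incomplete, and since your argument for $L\not\sim L'$ in \xref{lemma:Cl:tors-l} invokes \xref{lemma:Cl:convex}, the gap propagates there as well (though that part can be repaired independently, as in the paper, by noting directly that $L\sim L'$ would force a pencil of curves of anticanonical degree $1$).
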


\begin{proof}
The assertion~\ref{lemma:Cl:tors} follows from a well-known construction, see e.g.~\cite[\S~3.6]{YPG}.

To prove the assertion \ref{lemma:Cl:tors-l}, observe that
$$
L\simQ L^\prime,
$$
because the numerical and $\QQ$-linear equivalences on the surface $X$ coincide.
But $L\not\sim L^\prime$, because otherwise $X$ would contain a pencil of lines, which contradicts $\numl(X)<\infty$.

The assertion \ref{lemma:Cl:extr} follows from Lemma~\ref{extremal-contractions} below.

Let us prove the assertion~\ref{lemma:Cl:convex}.
Let $\widetilde{D}$ be the~proper transform on $\widetilde{X}$ of the~divisor~$D$.
To~prove~\ref{lemma:Cl:convex}, it~is~sufficient to show that the~divisor $\widetilde{D}$ is rationally equivalent to a~convex integral
linear combination of $(-1)$ and $(-2)$-curves (and $-K_{\widetilde{X}}$ if $d=1$).

We may assume that $\widetilde{D}$ is an irreducible curve.

Let us use induction on $\dim |\widetilde{D}|$.
If $\dim |\widetilde{D}|=0$, then $\widetilde{D}$ is either a~$(-1)$ or $(-2)$-curve
by the~Riemann-Roch formula and Kawamata-Viehweg vanishing.
This is the~base of induction.

Suppose that $\dim |\widetilde{D}|\geqslant 1$ and the~required assertion holds for any effective
divisor $\widetilde{D}^\prime$ on $\widetilde{X}$ such that $\dim|\widetilde{D}^\prime|<\dim |\widetilde{D}|$.
Observe that $\widetilde{D}$ is nef.
Thus, if $\widetilde{D}^2=0$, then $|\widetilde{D}|$ is base point free and gives a~conic bundle $\widetilde{X}\to\PP^1$,
which must have at least one reducible fiber, because $\uprho(\widetilde{X})\geqslant 3$.
Hence, if $\widetilde{D}^2=0$, then we can proceed by induction.
Thus, we may assume that $\widetilde{D}^2\geqslant 1$.

If $\widetilde{D}$ is not ample, then $\widetilde{X}$ contains an irreducible curve $\widetilde{C}$ such that $\widetilde{D}\cdot \widetilde{C}=0$,
which implies that $\widetilde{C}^2<0$ by the~Hodge index theorem, so that $\widetilde{C}^2=-1$ or $\widetilde{C}^2=-2$,
which gives
$$
\dim |\widetilde{D}-\widetilde{C}|\geqslant\dim |\widetilde{D}|-1\geqslant 0.
$$
Hence, if $\widetilde{D}$ is not ample, then there a~exists an effective divisor $\widetilde{D}^\prime$ such that $\widetilde{D}\sim \widetilde{D}^\prime+\widetilde{C}$,
so that we can proceed by induction.
Therefore, we may assume that $\widetilde{D}$ is ample.

Suppose that $\widetilde{D}\sim -K_{\widetilde{X}}$ and $K_{\widetilde{X}}^2\geqslant 2$.
Then for any $(-1)$-curve $\widetilde{C}$ on $\widetilde{X}$ we have
$$
\dim |\widetilde{D}-\widetilde{C}|\geqslant \dim |\widetilde{D}|-2\geqslant 0,
$$
so that there is an effective divisor $\widetilde{D}^\prime$ such that $\widetilde{D}\sim \widetilde{D}^\prime+\widetilde{C}$,
and we can proceed by induction.

Finally, we assume that $\widetilde{D}$ is ample and $\widetilde{D}\not\sim -K_{\widetilde{X}}$.
There is $a\in\NN$ such that $\widetilde{D}+aK_{\widetilde{X}}$ is nef but not ample,
because the~Mori cone of the~surface $\widetilde{X}$ is generated by $(-1)$-curves and $(-2)$-curves.
Now using~the~Riemann-Roch formula and Kawamata--Viehweg vanishing, we see that the~linear system $|\widetilde{D}+aK_{\widetilde{X}}|$~contains a~divisor $\widetilde{D}^\prime$,
so that
$$
\widetilde{D}\sim \widetilde{D}^\prime-aK_{\widetilde{X}},
$$
where $\widetilde{D}^\prime$ and $-K_{\widetilde{X}}$ are both decomposable in the~required form.
\end{proof}

\begin{corollary}
\label{corollary:Cl}
One has $\numl(X)\geqslant\uprho(X)$.
Moreover, if $\numl(X)=\uprho(X)$, then $\Cl(X)$ is torsion free,
and every line in $X$ generates an extremal ray of the~Mori cone $\NE(X)$.
\end{corollary}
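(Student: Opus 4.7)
The plan is to extract Corollary \ref{corollary:Cl} directly from Lemma \ref{lemma:Cl}, with essentially no geometric input beyond what that lemma already provides.

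For the inequality, Lemma~\ref{lemma:Cl}\ref{lemma:Cl:gens} says that $\Cl(X)$ is generated by the $\numl(X)$ classes of lines. Since $\Cl(X)$ has rank $\uprho(X)$, every generating set must contain at least $\uprho(X)$ elements, so $\numl(X)\geqslant\uprho(X)$.

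For the equality case, set $r:=\numl(X)=\uprho(X)$ and let $L_1,\dots,L_r$ denote the lines in $X$. Consider the surjection $\ZZ^r\twoheadrightarrow\Cl(X)$ sending the standard generators to $[L_1],\dots,[L_r]$. Its kernel has rank $0$ and sits inside the torsion-free group $\ZZ^r$, hence vanishes. Therefore $\Cl(X)\cong\ZZ^r$ is torsion free, and $[L_1],\dots,[L_r]$ form a $\ZZ$-basis of $\Cl(X)$; in particular they are $\QQ$-linearly independent.

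It remains to verify that every line generates an extremal ray of $\NE(X)$. Since $-K_X$ is ample and Du Val singularities are klt, the cone theorem guarantees that $\NE(X)$ is a strongly convex rational polyhedral cone of full dimension $r$ in $N_1(X)_{\mathbb{R}}$, and such a cone has at least $r$ extremal rays. On the other hand, Lemma~\ref{lemma:Cl}\ref{lemma:Cl:extr} tells us that every extremal ray of $\NE(X)$ is generated by the class of some line, so there are at most $\numl(X)=r$ extremal rays. Hence $\NE(X)$ has exactly $r$ extremal rays, and the $\QQ$-linear independence of $[L_1],\dots,[L_r]$ established above forces these rays to be precisely the $r$ rays spanned by the lines. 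No step of this argument presents a real obstacle; the whole content is already packaged in Lemma~\ref{lemma:Cl} together with the standard cone theorem for klt weak Fano varieties.
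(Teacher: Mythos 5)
Your proof is correct; the paper states this corollary immediately after Lemma~\ref{lemma:Cl} with no written proof, and your argument is exactly the intended one. The rank count on the generating set from Lemma~\ref{lemma:Cl}\ref{lemma:Cl:gens} gives the inequality and, in the equality case, forces $\Cl(X)\cong\ZZ^{\uprho(X)}$, and comparing the at least $\uprho(X)$ extremal rays of the full-dimensional strongly convex polyhedral cone $\NE(X)$ with the at most $\numl(X)$ rays available by Lemma~\ref{lemma:Cl}\ref{lemma:Cl:extr} pins down every line as extremal.
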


\begin{corollary}
\label{corollary:Ga}
Suppose that $d\leqslant 7$, and $X$ admits a~faithful $\Ga^2$-action.
Then $\numl(X)=\uprho(X)$.
Moreover, the~complement to the~open orbit coincides with the~union of lines.
\end{corollary}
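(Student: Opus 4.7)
The plan is to identify the complement $D := X\setminus U$ of the open orbit with the union of the lines on $X$, and to show that its irreducible components form a $\QQ$-basis of $\Cl(X)\otimes\QQ$; combining these two facts with Corollary~\ref{corollary:Cl} then yields both claims of the statement.

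First, I would check that an open orbit isomorphic to $\Aff^2$ exists. Since $\Ga^2$ is connected, commutative, and acts faithfully on the irreducible surface $X$, the generic stabilizer is a connected subgroup of $\Ga^2$ acting trivially on a dense open set; by irreducibility this subgroup acts trivially on $X$, and faithfulness forces it to be trivial. Hence the generic orbit has dimension $2$ and there is an open orbit $U\cong\Ga^2\cong\Aff^2$. Let $C_1,\dots,C_r$ be the irreducible components of $D$.

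Next, I would exploit the two vanishing results $\Cl(\Aff^2)=0$ and $\OOO^*(\Aff^2)=\Bbbk^*$. The excision sequence
\[
\bigoplus_{i=1}^{r}\ZZ\cdot[C_i]\longrightarrow \Cl(X)\longrightarrow \Cl(U)\longrightarrow 0
\]
shows that the classes $[C_i]$ generate $\Cl(X)$, so $r\geqslant\uprho(X)$. Conversely, any rational function $f\in\Bbbk(X)^{*}$ with $\operatorname{div}(f)$ supported on $D$ restricts to a unit on $U\cong\Aff^2$, hence is constant, and so $\operatorname{div}(f)=0$. Thus the $[C_i]$ are $\QQ$-linearly independent in $\Cl(X)\otimes\QQ$, giving $r\leqslant\uprho(X)$ and hence $r=\uprho(X)$.

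Finally, I would show each $C_i$ is a line. Because $X$ carries only finitely many lines and $\Ga^2$ is connected, the action must fix each line $L\subset X$ setwise. If $L$ met $U$, then the entire $\Ga^2$ would lie in the stabilizer of a point of $L\cap U$, contradicting the triviality of stabilizers on the free orbit $U$. So every line is a component of $D$, giving $\numl(X)\leqslant r$. Combined with Corollary~\ref{corollary:Cl} (which gives $\uprho(X)\leqslant\numl(X)$) and $r=\uprho(X)$, we obtain $\numl(X)=\uprho(X)=r$, with every $C_i$ necessarily a line. The only delicate point is ensuring that the two-dimensional open orbit genuinely exists; once this and the identification $U\cong\Aff^2$ are in place, the rest is a formal consequence of the two vanishing properties of $\Aff^2$.
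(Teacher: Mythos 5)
From the point where the open orbit $U\cong\Aff^2$ is in hand, your argument is correct and is essentially the paper's: the paper likewise shows that the boundary $B=X\setminus U$ has exactly $\uprho(X)$ irreducible components, that every line lies in $B$, and then invokes Corollary~\ref{corollary:Cl} for the opposite inequality $\numl(X)\geqslant\uprho(X)$. The only real difference is how the number of boundary components is computed: you use the excision sequence for $\Cl$ together with $\Cl(\Aff^2)=0$ and $\OOO^{*}(\Aff^2)=\Bbbk^{*}$ directly on $X$, whereas the paper pushes $U$ down to $\PP^2$ through the equivariant diagram~\eqref{equation:min-resolution} and observes that the complement of an open $\Aff^2$ in $\PP^2$ is a single line. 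Both computations are valid, and your closing step (all $\numl(X)$ lines are among the $r=\uprho(X)$ components, so equality holds and every component is a line) is fine.

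The genuine gap is exactly at the step you yourself flag as delicate, and your argument for it fails. For a commutative connected group the stabilizers of distinct points need not coincide; faithfulness only forces their \emph{intersection} (the kernel of the action) to be trivial, so there is no single ``generic stabilizer'' acting trivially on a dense open set. Concretely, the subgroup of $\PGL_3(\Bbbk)$ of transformations $(x_0:x_1:x_2)\mapsto(x_0+ax_1+bx_2:x_1:x_2)$ is isomorphic to $\Ga^2$ and acts faithfully on $\PP^2$, yet every orbit has dimension at most $1$: the orbit of a point $P\neq(1:0:0)$ is the line through $P$ and $(1:0:0)$ punctured at $(1:0:0)$, and the stabilizer of $P=(x_0:x_1:x_2)$ is the one-dimensional subgroup $ax_1+bx_2=0$, which varies with $P$. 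Nor is this an artifact of $\PP^2$, where $d=9$: the same group preserves the flag consisting of $(1:0:0)$ and any line through it, hence embeds into $\Aut^0$ of the singular del Pezzo surface of degree $7$ (surface~\ref{d=7}, cf.\ Example~\ref{example:dP7-A1}), giving a faithful $\Ga^2$-action with no open orbit on a surface satisfying the hypotheses of the corollary. So the existence of the open orbit cannot be derived from faithfulness alone; it must either be built into the hypothesis --- which is how the statement is effectively read, and indeed the paper's own proof also simply begins with ``let $U$ be the open $\Ga^2$-orbit'' without further justification --- or be handled by a separate argument, for instance by classifying the two-dimensional commutative unipotent subgroups of $\PGL_3(\Bbbk)$ via the diagram~\eqref{equation:min-resolution} and disposing of the one exceptional conjugacy class by hand.
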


\begin{proof}
Let $U$ be the~open $\Ga^2$-orbit, let $\widetilde{U}=\mu^{-1}(U)$, let $\overline{U}=\varphi(\widetilde U)$,
let $B=X\setminus U$, let $\widetilde{B}=\widetilde X\setminus \widetilde{U}$, and let $\overline{B}=\PP^2\setminus\overline{U}$.
Then
$$
U\cong\widetilde{U}\cong\overline{U}\cong \Aff^2,
$$
so that the~curve $\overline{B}$ must be a~line.
Then $\widetilde{B}$ has $\uprho(\widetilde X)$ components, and $B$ has $\uprho(X)$ components.
Since all $(-1)$-curves on $\widetilde X$ are contained in $\widetilde{B}$,
we see that all the~lines in $X$ are contained in $B$. This gives $\numl(X)\leqslant\uprho(X)$.
But $\numl(X)\geqslant \uprho(X)$ by Corollary~\ref{corollary:Cl}.
\end{proof}

Observe that a~line $L$ on the~surface $X$ generates an extremal ray of~$\NE(X)$ $\iff$ $L^2\leqslant 0$.

\begin{lemma}[{\cite[Proposition~1.2]{Morrison-1985},\cite[\S~7.1]{Prokhorov-2001}}]
\label{extremal-contractions}
Let $V$ be a~surface that has Du Val singularities, and let $\psi\colon V\to Y$ be an extremal Mori contraction.
Then one of the~following holds:
\begin{enumerate}
\item
\label{extremal-contractions2}
either $\psi$ is a~weighted blow up of a~smooth point in $Y$ with weights $(1,n)$,
the~exceptional curve $E$ is smooth and rational, one has $E^2=-\frac1{n}$,
and $E\cap \Sing(V)$ consists of one  point which is  of type $\type{A}_{n-1}$ on $V$;
\item
\label{extremal-contractions1}
or $\psi$ is a~conic bundle, one has $-K_V\cdot F=2$ and $F_{\red}\cong\PP^1$ for any its scheme fiber~$F$,
and if $F$ is not reduced, then one of the~following three cases holds:
\begin{itemize}
\item
$F\cap\Sing(V)$ consists of two singular points of type $\type{A}_1$;
\item
$F\cap\Sing(V)$ consists of one singular point of type $\type{A}_3$;
\item
$F\cap\Sing(V)$ consists of one singular point of type $\type{D}_n$, where $n\geqslant 4$.
\end{itemize}
\end{enumerate}
In the~case~\ref{extremal-contractions2}, we say that $\psi$ is a~\emph{$(1,n)$-contraction}.
\end{lemma}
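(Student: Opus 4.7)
The plan is to pass to the~minimal resolution $\mu\colon\widetilde{V}\to V$ and study the~composite $\widetilde{\psi}=\psi\circ\mu\colon\widetilde{V}\to Y$; since $\widetilde{V}$ is smooth and $K_{\widetilde{V}}=\mu^*K_V$, intersection numbers on $V$ can be computed on $\widetilde{V}$ via pullback.

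For case~\ref{extremal-contractions2}, extremality on the~$\QQ$-factorial surface $V$ forces $\psi$ to contract a~single irreducible curve $E$, and $K_{\widetilde{V}}\cdot\widetilde{E}=K_V\cdot E<0$ shows that the~proper transform $\widetilde{E}$ is a~smooth rational $(-1)$-curve. The~exceptional divisor of $\widetilde{\psi}$ is then $\widetilde{E}$ together with the~$(-2)$-curves of $\mu$ lying over $E\cap\Sing(V)$, all contracted to a~single smooth point of $Y$, since $\widetilde{\psi}$ is a~composition of point blow-ups by Castelnuovo. Iteratively blowing down $\widetilde{E}$ and then any newly appearing $(-1)$-curve, one checks that the~dual graph must be a~linear chain $\widetilde{E}-C_1-\cdots-C_{n-1}$ with $\widetilde{E}$ at an~endpoint and each $C_i$ a~$(-2)$-curve; any other configuration (attaching $\widetilde{E}$ at an~interior vertex of an~$\type{A}$-chain, to a~$\type{D}$- or $\type{E}$-subgraph, or to $(-2)$-curves belonging to different singular points) produces a~$0$-curve during the~blow-down sequence and fails to contract to a~smooth point. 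Hence $E\cap\Sing(V)$ is a~single $\type{A}_{n-1}$-point. Setting $\mu^*E=\widetilde{E}+\sum_{i=1}^{n-1}b_iC_i$ and solving $\mu^*E\cdot C_j=0$ yields $b_i=(n-i)/n$, whence $E^2=(\mu^*E)^2=\mu^*E\cdot\widetilde{E}=-1+b_1=-\frac{1}{n}$.

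For case~\ref{extremal-contractions1}, $\widetilde{\psi}$ is a~surjection onto a~smooth curve with generic fibre $\PP^1$, and $\rho(V/Y)=1$ forces $F_{\red}$ to be irreducible; combined with $p_a(F_{\red})=0$ this gives $F_{\red}\cong\PP^1$, and constancy of $-K_V\cdot F$ in a~flat family of fibres gives $-K_V\cdot F=2$. Since $K_V$ is Cartier, $-K_V\cdot F_{\red}$ is an~integer, so if $F=mF_{\red}$ is non-reduced then $m=2$. Writing $\mu^*F=2\widetilde{F_{\red}}+\sum c_iE_i$, the~orthogonality conditions $\mu^*F\cdot E_j=0$ determine the~$c_i$ as rational multiples of $2$ depending on the~Du Val types of the~singular points of $V$ on $F_{\red}$ and on the~attachment points of $\widetilde{F_{\red}}$; integrality of the~$c_i$, integrality of $\widetilde{F_{\red}}^{\,2}$, and the~condition $(\mu^*F)^2=0$ then restrict the~possibilities severely. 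A~direct check through the~ADE classification leaves exactly the~three listed configurations, with $\widetilde{F_{\red}}$ a~$(-1)$-curve meeting a~single $(-2)$-curve in each of two distinct $\type{A}_1$-graphs, or the~middle $(-2)$-curve of an~$\type{A}_3$-chain, or the~distinguished $(-2)$-leaf of a~$\type{D}_n$-configuration.

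The~main obstacle will be the~combinatorial case analysis in case~\ref{extremal-contractions1}: for each Du Val type and each choice of attaching point, one must solve the~linear system defining the~$c_i$, verify that $m=2$ gives integer coefficients together with $\widetilde{F_{\red}}^{\,2}\in\ZZ$ and $(\mu^*F)^2=0$, and exclude the~remaining ADE patterns ($\type{A}_n$ for $n\notin\{1,3\}$, and $\type{E}_{6,7,8}$). After this graph-theoretic enumeration, the~rest of the~lemma reduces to routine intersection computations; the~hard content is precisely the~matching of the~numerical constraints against the~ADE Dynkin diagrams.
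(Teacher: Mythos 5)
The paper does not actually prove this lemma: it is imported from \cite{Morrison-1985} and \cite{Prokhorov-2001}, so there is no in-paper argument to compare against. Your sketch follows exactly the route those references take: pass to the minimal resolution, use $K_{\widetilde{V}}=\mu^*K_V$ to identify the proper transform of the contracted curve (resp.\ of $F_{\red}$) as a $(-1)$-curve, and classify the admissible attachments of that $(-1)$-curve to the ADE exceptional graphs by solving the orthogonality equations. The numerical details you do supply are correct: the coefficients $b_i=(n-i)/n$ and hence $E^2=-\frac1n$ in case (i); the bound $m=2$ from $-K_V\cdot F_{\red}\in\ZZ_{>0}$ in case (ii); and the three surviving fiber configurations, whose multiplicities work out to $(1,1)$, $(1,2,1)$ and $(2,\dots,2,1,1)$ respectively, each satisfying $\sum c_i(E_i\cdot\widetilde{F}_{\red})=2$.

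Two of your justifications are shakier than the rest, though both are repairable. First, in case (i) you invoke Castelnuovo to conclude that $\widetilde{\psi}$ contracts everything to a \emph{smooth} point of $Y$; but Castelnuovo factorization presupposes that $Y$ is smooth there, which is part of what the lemma asserts. The fix is already implicit in your next sentence: the exceptional configuration over $\psi(E)$ must be negative definite, blowing down a $(-1)$-curve replaces the intersection matrix of the residual curves by a Schur complement and therefore preserves negative definiteness, so a $0$-curve can never appear; since the configuration is connected and each residual component always acquires a fresh $(-1)$-curve, the blow-down process terminates only when everything is contracted. This yields the linear-chain shape \emph{and} the smoothness of $\psi(E)$ simultaneously, with no appeal to Castelnuovo — so replace ``fails to contract to a smooth point'' by ``violates negative definiteness of the exceptional configuration''. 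Second, the assertion $p_a(F_{\red})=0$ cannot come from adjunction on $V$: for the non-Cartier divisor $F_{\red}$ the naive adjunction computation gives $p_a(F_{\red})=1-\frac1m$, which fails to be an integer exactly when the fiber is multiple, because adjunction acquires a correction term at $\Sing(V)$. The isomorphism $F_{\red}\cong\PP^1$ should instead be read off \emph{after} the configuration analysis, by checking in each of the three cases that $\widetilde{F}_{\red}$ maps bijectively onto $F_{\red}$ and that the image is smooth at the Du Val points it passes through.
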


Applying this lemma to our Du Val del Pezzo surface $X$, we get

\begin{corollary}
\label{corollary:negative-curves-DP}
Let $E$ be an irreducible curve on $X$ such that $E^2<0$.
Then $E$ is a~line on $X$, and $E$ is an exceptional divisor of a~$(1,n)$-contraction for some $n\geqslant 1$.
\end{corollary}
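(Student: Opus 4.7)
The plan is to show that $[E]$ generates an extremal ray of $\NE(X)$, deduce via Lemma~\ref{lemma:Cl}\ref{lemma:Cl:extr} that $E$ is a line, and then identify the corresponding extremal contraction as a $(1,n)$-contraction using Lemma~\ref{extremal-contractions}. If $d\geqslant 8$ the statement is checked by inspection ($X$ is $\PP^2$, $\PP^1\times\PP^1$, the quadric cone, or $\FF_1$, and only $\FF_1$ admits an irreducible curve of negative self-intersection, namely its unique $(-1)$-section), so I assume $d\leqslant 7$.

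By the finiteness of lines on $X$ together with Lemma~\ref{lemma:Cl}\ref{lemma:Cl:extr}, the Mori cone $\NE(X)$ has only finitely many extremal rays, each spanned by the class of a line. Let $L_1,\dots,L_r$ be such lines and write $[E]=\sum_{i=1}^r \alpha_i[L_i]$ with $\alpha_i\geqslant 0$. If $E$ were distinct from every $L_i$, then irreducibility of $E$ and of each $L_i$ would give $E\cdot L_i\geqslant 0$, whence
\[
E^2=\sum_{i=1}^r \alpha_i(E\cdot L_i)\geqslant 0,
\]
contradicting $E^2<0$. Thus $E=L_j$ for some $j$; in particular, $E$ is a line and $[E]$ generates an extremal ray of $\NE(X)$.

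Let $\psi\colon X\to Y$ be the extremal Mori contraction corresponding to this ray. By Lemma~\ref{extremal-contractions}, $\psi$ is either a conic bundle or a $(1,n)$-contraction. In the conic-bundle case, every scheme fiber $F$ satisfies $F_{\red}\cong\PP^1$, so writing $F=kF_{\red}$ we get $[F_{\red}]^2=\tfrac{1}{k^2}[F]^2=0$; hence no irreducible $\psi$-contracted curve can have negative self-intersection, contradicting $E^2<0$. Therefore $\psi$ is a $(1,n)$-contraction, and since such a $\psi$ contracts exactly one irreducible curve, that curve must be $E$. The main obstacle is the first step — establishing extremality of $[E]$; once this is done, the dichotomy in Lemma~\ref{extremal-contractions} concludes the argument immediately.
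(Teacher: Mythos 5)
Your proof is correct and takes essentially the same route the paper intends: the paper states this corollary as an immediate application of Lemma~\ref{extremal-contractions} without writing out the details, and your argument (an irreducible curve with $E^2<0$ must coincide with one of the lines spanning the finitely many extremal rays from Lemma~\ref{lemma:Cl}\ref{lemma:Cl:extr}, the associated contraction cannot be a conic bundle since reductions of its fibers have self-intersection zero, hence it is a $(1,n)$-contraction with exceptional curve $E$) is precisely the standard filling-in of that derivation. The separate inspection for $d\geqslant 8$ is a harmless extra.
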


\begin{corollary}
\label{corollary:extremal-contractionsDP}
Suppose there exists a~birational morphism $\psi\colon X\to Y$ that is a~$(1,n)$-contraction,
and let $E$ be the~exceptional curve of the~morphism $\psi$. Then
\begin{itemize}
\item
the~point $\psi(E)$ is a~smooth point of the~surface $Y$;
\item
$Y$ is a~Du Val del Pezzo surface, $K_Y^2=d+n$ and $\uprho(Y)=\uprho(X)-1$;
\item
the~point $\psi(E)$ is not contained in a~line in $Y$.
\end{itemize}
\end{corollary}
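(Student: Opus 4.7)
The plan is to deduce all three assertions directly from Lemma~\ref{extremal-contractions} by routine intersection-theoretic bookkeeping on $X$ and $Y$. Part~(i) is immediate from Lemma~\ref{extremal-contractions}\ref{extremal-contractions2}: a $(1,n)$-contraction is, by definition, a weighted blow-up of a smooth point of~$Y$.

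For part~(ii), I would begin with the discrepancy formula $K_X\sim\psi^*K_Y+nE$ for the $(1,n)$-weighted blow-up of a smooth point (the discrepancy of $E$ equals $1+n-1=n$). Combining this with $E^2=-1/n$ from Lemma~\ref{extremal-contractions} yields $K_X^2=K_Y^2-n$, hence $K_Y^2=d+n$. The class-group rank drops by one because $\psi$ is birational and contracts a single irreducible curve whose class spans the kernel of $\psi_*$. Since $\psi$ is an isomorphism outside~$E$ and $\psi(E)$ is smooth on~$Y$ by~(i), $Y$ has at worst Du Val singularities. Finally, rewriting the discrepancy formula as $\psi^*(-K_Y)=-K_X+nE$ and testing against an irreducible curve $C\subset Y$ with strict transform $\widetilde C\subset X$ yields
$-K_Y\cdot C=-K_X\cdot\widetilde C+n\,E\cdot\widetilde C>0$
(the first summand is positive by ampleness of $-K_X$, the second is non-negative since $E\ne\widetilde C$); together with $K_Y^2=d+n>0$, Nakai--Moishezon gives ampleness of $-K_Y$.

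For part~(iii), I argue by contradiction. Suppose $L\subset Y$ is a line passing through $P:=\psi(E)$, and let $L'\subset X$ be its strict transform. Writing $\psi^*L=L'+\alpha E$ with $\alpha\in\QQ$ forced by $\psi^*L\cdot E=0$ gives $\alpha=n(L'\cdot E)$. The crucial claim is that $\alpha\geq 1$: the $A_{n-1}$ point on~$E$ has local class group~$\ZZ/n$ with the class of~$E$ generating it, so $nE$ is Cartier, whence $\alpha=L'\cdot(nE)$ is a non-negative integer, positive because $L'$ meets~$E$ (as $L\ni P$). On the other hand, pairing $\psi^*(-K_Y)=-K_X+nE$ with $\psi^*L=L'+\alpha E$ and expanding using $E^2=-1/n$, $-K_X\cdot E=1$ and $L'\cdot E=\alpha/n$ causes the cross-terms to telescope into $-K_Y\cdot L=-K_X\cdot L'+\alpha$. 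Since $-K_Y\cdot L=1$ by adjunction for the $(-1)$-curve on $\widetilde Y$ defining~$L$ (Definition~\ref{def:line}), this forces $-K_X\cdot L'=1-\alpha\leq 0$, contradicting ampleness of $-K_X$ on the non-zero effective divisor $L'$.

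The only step that is not purely formal is establishing that $\alpha$ is a positive integer, which rests on identifying the local Cartier index of the Du Val $A_{n-1}$ point on~$E$ as exactly~$n$ (with $E$ representing a generator of the local class group). Everything else reduces to direct application of the discrepancy and projection formulas.
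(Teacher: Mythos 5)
Your proof is correct and follows the route the paper intends: the paper states this corollary without proof as an immediate consequence of Lemma~\ref{extremal-contractions}, and your argument simply fills in the standard intersection-theoretic details (discrepancy $K_X\sim\psi^*K_Y+nE$, $E^2=-1/n$, Nakai--Moishezon, and the integrality of $L'\cdot(nE)$ at the $\type{A}_{n-1}$ point), all of which check out.
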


\begin{corollary}
\label{corollary:extremal-contractions-faces}
Let $\psi\colon X\to Y$ be a~contraction of a~proper face of the~cone $\NE(X)$. Then
\begin{itemize}
\item
either the~morphism $\psi$ is birational, $Y$ is a~Du Val del Pezzo surface, and $\psi$ contracts a~disjoint union of lines on the~surface $X$,
\item
or the~morphism $\psi$ is a~conic bundle and $Y\cong\PP^1$.
\end{itemize}
\end{corollary}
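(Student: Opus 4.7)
I plan to prove the statement by induction on the dimension of the face $F$ being contracted by $\psi$, with base case $\dim F=1$, i.e.\ $F$ an extremal ray. In the base case I apply Lemma~\ref{extremal-contractions} directly: either $\psi$ is a $(1,n)$-contraction, which is birational and collapses a single $(-1/n)$-curve --- a line on $X$ by Corollary~\ref{corollary:negative-curves-DP} --- with $Y$ a Du Val del Pezzo surface by Corollary~\ref{corollary:extremal-contractionsDP}; or $\psi$ is a conic bundle, in which case $Y$ is a normal curve and hence smooth, and rational because $X$ is (via the diagram~\eqref{equation:min-resolution}), so $Y\cong\PP^1$.

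For the inductive step $\dim F\geqslant 2$, I pick any extremal ray $R\subset F$. I first rule out $R$ being a conic bundle ray: if it were, then any contraction factoring through the resulting conic bundle $X\to\PP^1$ would either coincide with it (forcing $F=R$ and contradicting $\dim F\geqslant 2$) or be constant (contradicting that $F$ is proper). Hence $R$ is birational, generated by a line $L$, with associated $(1,n)$-contraction $\psi_R\colon X\to X'$; Corollary~\ref{corollary:extremal-contractionsDP} tells us $X'$ is a Du Val del Pezzo surface and that $\psi_R(L)$ lies on no line of $X'$. Since $R\subset F$, the map $\psi$ factors as $\psi=\psi'\circ\psi_R$, where $\psi'\colon X'\to Y$ contracts the pushed-forward face $(\psi_R)_*F\subset\NE(X')$, of dimension $\dim F-1$ and still proper.

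Applying induction to $\psi'$: if $\psi'$ is a conic bundle to $\PP^1$, then so is $\psi$ (same generic fibre, since $\psi_R$ is birational). Otherwise $\psi'$ is birational with $Y$ a Du Val del Pezzo surface, contracting disjoint lines $L'_1,\dots,L'_k\subset X'$; none of them passes through $\psi_R(L)$, so each proper transform $\widetilde{L_i}\subset X$ is disjoint from $L$, maps isomorphically onto $L'_i$, and is a line on $X$ by Corollary~\ref{corollary:negative-curves-DP}. Hence $\psi$ contracts the disjoint union $L\cup\widetilde{L_1}\cup\dots\cup\widetilde{L_k}$. The main technical point, which I expect to be the chief obstacle to a clean write-up, is verifying that $(\psi_R)_*F$ is indeed a proper face of $\NE(X')$ and that the factorisation $\psi=\psi'\circ\psi_R$ genuinely exists; both follow from the injectivity of $\psi_R^*\colon N^1(X')\to N^1(X)$ together with standard rigidity properties of extremal Mori contractions.
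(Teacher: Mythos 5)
Your argument is correct and is essentially the derivation the paper intends: the paper states this as an immediate consequence of Lemma~\xref{extremal-contractions} (together with Corollaries~\xref{corollary:negative-curves-DP} and~\xref{corollary:extremal-contractionsDP}), obtained by factoring the face contraction into a chain of extremal contractions exactly as in your induction. The technical points you flag (properness of the pushed-forward face and existence of the factorisation $\psi=\psi'\circ\psi_R$) are standard consequences of the cone and contraction theorems and do not need further elaboration here.
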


If~a del Pezzo surface $X$ is smooth and $\uprho(X)\geqslant 2$, then $X$ always admits a~conic bundle contraction.
However, this is not always the~case if  $X$ has Du Val singularities.

\begin{lemma}
\label{lemma:conic-bundles}
Let $X$ be a Du Val del Pezzo surface of degree $d$ with $\uprho(X)\geqslant 2$.
\begin{enumerate}
\item\label{conic:bundle:3}
Assume that $d=3$. Then there exists a~conic bundle structure $\psi\colon X\to \PP^1$
if and only if $X$ contains a~line $L$ that is contained in $X\setminus\Sing(X)$.

\item\label{conic:bundle:4}
Assume that $d=4$. Then there exists a~conic bundle structure $\psi\colon X\to \PP^1$
if and only if there is a~double cover $\pi\colon X\to\PP^1\times\PP^1$ branched over a~curve of degree $(2,2)$.
\end{enumerate}
\end{lemma}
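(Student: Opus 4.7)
The~strategy in each part is to identify the~conic bundle on~$X$ with a~residual linear system.

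For~(i), in the~``if'' direction I~would take the~pencil of planes in~$\PP^3$ containing the~line~$L$. Since $L$ avoids~$\Sing(X)$, it is Cartier, so the~residual linear system $|-K_X-L|$ is well defined; it~satisfies $(-K_X-L)^2=0$ and $(-K_X-L)\cdot L=2$. Base-point-freeness follows because every point of~$X\setminus L$ lies on the~residual conic of the~unique plane through it and~$L$, while the~two points $C\cap L$ move in a~$g^1_2$ on $L\cong\PP^1$ as $C$ varies in the~pencil. The~induced morphism $X\to\PP^1$ has fibers of anticanonical degree~$2$, hence is a~conic bundle. For the~``only if'' direction, take a~smooth fiber~$F_0$ of~$\psi$; as a~smooth conic in~$\PP^3$ it spans a~unique plane~$\Pi$, and the~residual of $F_0$ in $\Pi\cap X$ is an~irreducible curve of anticanonical degree~$1$, i.e.~a~line~$L_0$, with $\Pi\cap X=F_0+L_0$. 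Hence $-K_X\sim F_0+L_0$, and combining this with $F_0^2=0$ and $-K_X\cdot L_0=1$ gives $L_0^2=-1$. Writing $\mu^*L_0=\widetilde{L}_0+\sum a_iE_i$ on the~minimal resolution, with $E_i$ the~$\mu$-exceptional $(-2)$-curves meeting $\widetilde{L}_0$ and $a_i\geqslant 0$ determined by $\mu^*L_0\cdot E_j=0$, a~short computation yields
$$
L_0^2=-1+\sum a_i(\widetilde{L}_0\cdot E_i).
$$
Each summand is non-negative, so $L_0^2=-1$ forces $\widetilde{L}_0\cdot E_i=0$ for all~$i$; that is, $L_0\cap\Sing(X)=\emptyset$.

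For~(ii), the~``if'' direction is immediate: composing $\pi$ with either projection $\PP^1\times\PP^1\to\PP^1$ gives a~morphism whose general fiber is a~double cover of~$\PP^1$ branched at two points, hence a~smooth rational curve; adjunction yields $-K_X\sim\pi^*(H_1+H_2)$, so the~fiber class~$F$ satisfies $-K_X\cdot F=2$. For the~``only if'', starting from $\psi$ with fiber class~$F$ I~would construct a~second conic bundle with fiber class $F':=-K_X-F$, for which $(F')^2=0$, $-K_X\cdot F'=2$ and $F\cdot F'=2$. Riemann--Roch gives $\chi(F')=2$, and Kawamata--Viehweg vanishing applies once one checks that $F'-K_X=-2K_X-F$ is ample; this reduces to $F\cdot M\leqslant 2$ for every line~$M\subset X$, which in turn follows from~B\'ezout, since the~general fiber~$F$ embeds as a~plane conic in~$\PP^4$. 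Hence $h^0(F')=2$, and $|F'|$ is a~base-point-free pencil defining a~conic bundle~$\psi'$. The~product $\pi:=(\psi,\psi')\colon X\to\PP^1\times\PP^1$ is then a~finite morphism of degree $F\cdot F'=2$, i.e.~a~double cover; combining $-K_X\sim F+F'$ with the~adjunction formula $K_X\sim\pi^*(K_{\PP^1\times\PP^1}+\tfrac12 B)$ identifies the~branch divisor as $B\in|2H_1+2H_2|$.

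The~most delicate point is the~``only if'' of~(ii): one must verify base-point-freeness of~$|F'|$ and finiteness of the~product map $(\psi,\psi')$, which amounts to ruling out a~common component between a~fiber of~$|F|$ and a~fiber of~$|F'|$.
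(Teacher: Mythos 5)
Your proposal is correct and follows essentially the same route as the paper: for (i) the conic bundle is identified with the linear projection from a line avoiding $\Sing(X)$ (equivalently the residual pencil of the planes through it), and for (ii) the double cover is $\psi\times\psi'$ where $\psi'$ comes from the residual pencil $|-K_X-F|$; the paper merely asserts these facts, while you supply the verifications (the $L_0^2=-1$ computation on the minimal resolution, the numerics of $F'$, the degree count $F\cdot F'=2$). The one loose point is the appeal to B\'ezout for $F\cdot M\leqslant 2$: a line and a conic in $\PP^4$ are not directly covered by B\'ezout, but the bound does hold (indeed $F\cdot M\leqslant 1$ for a general fiber $F$), e.g.\ because $M$ cannot lie in the plane spanned by $F$ --- that plane meets the intersection of quadrics $X$ only in $F$ --- so $F$ and $M$ meet transversally in at most one smooth point; alternatively the Hodge index inequality applied to $\mu^*F+2\mu^*M$ on $\widetilde{X}$ gives the same bound.
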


\begin{proof}
If $d=3$, then $X$ is a~cubic surface in $\PP^3$, so that every conic bundle $\psi\colon X\to \PP^1$
is given by the~linear projection from some line in $X$ that does not contain singular points of the~surface $X$,
so that $X$ admits a~conic bundle contraction if and only if such a line exists.

Assume that $d=4$.
If there is a~double cover $X\to\PP^1\times\PP^1$ branched over a~curve of degree~$(2,2)$,
then composing it with a~projection to one of the~factors, we obtain the~required~conic bundle.
Thus, we may assume that there exists a~conic bundle $\psi\colon X\to \PP^1$.
Let~$C$ be its general fiber.
Then $|-K_X-C|$ is base point free and gives another conic bundle $\psi^\prime\colon X\to\PP^1$.
Let $\pi=\psi\times \psi^\prime$.
Then $\pi\colon X\to\PP^1\times\PP^1$ is the~required double cover.
\end{proof}

\section{The~Fano--Weil index of Du Val del Pezzo surfaces}
\label{section:Fano--Weil}

Recall from Section~\ref{section:intro} that $\ind(X)$ is the~Fano--Weil index of a~del Pezzo surface $X$.

\begin{lemma}
\label{lemma:rho=1=>index>1}
Let $X$ be a Du Val del Pezzo surface with  $\uprho(X)=1$ and $K_X^2\geqslant 3$. Then $\ind(X)=K_X^2$.
\end{lemma}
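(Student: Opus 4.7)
The strategy is to combine three facts from Section~\ref{section:preliminaries}: (a) $\Pic(X)$ is torsion-free and $-K_X$ is Cartier on a Du Val surface; (b) lines generate $\Cl(X)$ by Lemma~\ref{lemma:Cl}\ref{lemma:Cl:gens}; and (c) every line $L\subset X$ satisfies $(-K_X)\cdot L=1$, which follows by pulling back to the minimal resolution $\mu\colon\widetilde X\to X$, where $\widetilde L$ is a $(-1)$-curve with $-K_{\widetilde X}\cdot\widetilde L=1$ (adjunction). Using $\uprho(X)=1$, I decompose $\Cl(X)=\mathbb{Z}\cdot A_0\oplus T$ with $T=\Cl(X)_{\tors}$.

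For any line $L$, write $L=\lambda\, A_0+\tau_L$ with $\tau_L\in T$ and $-K_X=m\,A_0+\tau_0$ with $\tau_0\in T$. Torsion classes have zero $\QQ$-valued intersection with any Weil divisor (from $n\tau=0 \Rightarrow n(\tau\cdot D)=0$), so
\[
1=(-K_X)\cdot L=\lambda\cdot m\cdot A_0^2.
\]
In particular every line shares the same positive integer $\lambda$. Because lines generate $\Cl(X)$, their images generate $\Cl(X)/T\cong\mathbb{Z}$; but those images all equal $\lambda\, A_0$, so the subgroup they span is $\lambda\mathbb{Z}$, forcing $\lambda=1$. Substituting back yields $m\,A_0^2=1$ and $K_X^2=(-K_X)^2=m^2 A_0^2=m$, so at the level of $\Cl(X)/T$ we have $-K_X\equiv K_X^2\cdot A_0$.

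The upper bound $\ind(X)\leqslant K_X^2$ is then immediate: any equivalence $-K_X\sim t\,B$ projects in $\Cl(X)/T$ to $t\cdot(\text{free part of }B)=K_X^2$, so $t\mid K_X^2$. For the lower bound $\ind(X)\geqslant K_X^2$ one needs an honest equivalence $-K_X\sim K_X^2\cdot A$ for some Weil divisor~$A$. When $T=0$ this is automatic, which by Lemma~\ref{lemma:Cl}\ref{lemma:Cl:tors} (giving $|T|\leqslant 9/K_X^2$) already handles every case with $K_X^2\geqslant 5$. For $K_X^2\in\{3,4\}$, the torsion component $\tau_0$ of $-K_X$ must be absorbed into a redefinition $A=A_0+\sigma$ with $\sigma\in T$: since $\Pic(X)$ sits inside $\Cl(X)$ as a rank-one torsion-free summand containing $-K_X$, and since $K_X^2=|T|$ in the extremal case, the relation $K_X^2\cdot\sigma=\tau_0$ needed to adjust $A_0$ can be solved using Lemma~\ref{lemma:Cl}\ref{lemma:Cl:convex} to write $-K_X$ as a nonnegative combination of lines (all primitive in $\Cl(X)/T$ by the preceding paragraph) and tracking their torsion components.

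The main obstacle is precisely this last step: going from the congruence $-K_X\equiv K_X^2\cdot A_0$ modulo $T$ to an honest linear equivalence, which amounts to verifying that the torsion component of $-K_X$ lies in $K_X^2\cdot T$. The earlier steps, together with primitivity of lines in $\Cl(X)/T$ and the restrictions on $|T|$, reduce this to a short bookkeeping argument on the finitely many possible line-torsion configurations in degrees $3$ and $4$.
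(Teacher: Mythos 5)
Your setup is sound and agrees in spirit with the paper's: both arguments observe that $L\simQ-\frac{1}{d}K_X$ for a line $L$ (so the whole issue is a torsion class in $\Cl(X)$), and both use Lemma~\xref{lemma:Cl}\xref{lemma:Cl:tors} to bound the torsion and dispose of $d\geqslant 5$ immediately. But the step you defer to ``a short bookkeeping argument'' is exactly where the real content lies, and it is not bookkeeping: the equation $d\cdot\sigma=\tau_0$ in $T=\Cl(X)_{\tors}$ is solvable for \emph{arbitrary} $\tau_0$ only when multiplication by $d$ is surjective on $T$, i.e.\ when $\gcd(d,|T|)=1$. By Lemma~\xref{lemma:Cl}\xref{lemma:Cl:tors} the possible orders are $|T|=2$ or $3$ for $d=3$ and $|T|=2$ for $d=4$, so in the cases $(d,|T|)=(3,3)$ and $(4,2)$ multiplication by $d$ kills $T$ (or fails to be surjective), and no choice of $\sigma$ absorbs a nonzero $\tau_0$. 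Your suggestion to use Lemma~\xref{lemma:Cl}\xref{lemma:Cl:convex} does not close this: writing $-K_X\sim\sum a_iL_i$ with $\sum a_i=d$ only gives $\tau_0=\sum a_i\tau_{L_i}$, and nothing forces this sum to lie in $dT$ without knowing the individual torsion classes $\tau_{L_i}$, which requires geometric input.

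What the paper actually does in these cases is show that $\tau_0=0$ to begin with, and this uses classification data rather than arithmetic in $\Cl(X)$. For $d=4$, \cite[Proposition~6.1]{Coray1988} gives $\Type(X)=\type{A}_32\type{A}_1$ or $\type{D}_5$; in the first case the local class groups at the singular points have exponent dividing $4$, so $4L$ is Cartier and $K_X+4L\sim 0$ because $\Pic(X)$ is torsion-free, while in the second case $L$ is the unique line and generates $\Cl(X)$, forcing $\Cl(X)\cong\ZZ$. For $d=3$ with $|T|=3$, the degree-$3$ abelian cover of Lemma~\xref{lemma:Cl}\xref{lemma:Cl:tors} is a del Pezzo surface of degree $9$, hence $\PP^2$, so $X\cong\PP^2/\mumu_3$ has $\Type(X)=3\type{A}_2$ and again $3L$ is Cartier, a contradiction. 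Only the case $d=3$, $|T|=2$ is settled by pure arithmetic (there $\gcd(3,2)=1$, and indeed $-K_X\sim 3(L+D)$ with $D=K_X+3L$). So your proof is incomplete precisely in the two subcases where the obstruction is genuine, and completing it requires the singularity-type classification and the cyclic-cover argument that you have not supplied.
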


\begin{proof}
Let $d:=K_X^2$ and $D:=K_X+dL$, where $L$ is a~line on $X$.
If $D\sim 0$, then we are done.
Thus, we may assume that~$D\not\sim 0$.
Since~$D\simQ 0$, the~divisor $D$ is a~non-trivial torsion in $\Cl(X)$.
Let $n$ be its order. Then
$$
2\leqslant n\leqslant \frac{9}{d}
$$
by Lemma~\ref{lemma:Cl}, so that either $d=3$ or $d=4$.

Suppose that $d=4$. Then either $\Type(X)=\type{A}_32\type{A}_1$ or $\Type(X)=\type{D}_5$ by \cite[Proposition~6.1]{Coray1988}.
In~the~former case, we see that $4L$ is a~Cartier divisor, so that $D\sim 0$, since $\Pic(X)$ is torsion free.
In the~latter case, $L$ is the~unique line in $X$ by \cite[Proposition~6.1]{Coray1988}, so that $D\sim 0$ by Lemma~\ref{lemma:Cl}.
Thus, in both cases we obtain a~contradiction with our assumption that $D\not\sim 0$.

Thus, we see that $d=3$. Then either $n=2$ or $n=3$. If $n=2$, we have
$$
K_X+3(L+D)\sim 4K_X+12L\sim 4(K_X+3D)\sim 0,
$$
so that $\ind(X)=3$. If $n=3$, then $X\cong\PP^2/\mumu_3$ by Lemma~\ref{lemma:Cl}, which implies that $\Type(X)=3\type{A}_2$.
In this case, the~divisor $3L$ is Cartier, which gives $D\sim 0$, because $\Pic(X)$ is torsion free.
\end{proof}

The number $\ind(X)$ divides the~degree $d$ of the~del Pezzo surface $X$, so that $\ind(X)=1$ if $d=1$.
If~$d\geqslant 2$, then the~Fano--Weil index $\ind(X)$ is closely related to the~following notion:

\begin{definition}
\label{definition:weakly-minimal}
A del Pezzo surface $X$ is said to be \emph{weakly minimal}
if $X$ does not contain lines that are contained in the~smooth locus of the~surface $X$.
\end{definition}

\begin{remark}
\label{remark:min}
If  $X$ is a weakly minimal Du Val del Pezzo surface, and $\psi\colon X\to Y$ is a~birational contraction,
then $Y$ is also a~weakly minimal Du Val del Pezzo surface by Corollary~\ref{corollary:extremal-contractionsDP}.
\end{remark}

\iffalse
Arguing as in the~proofs of Lemma~\ref{lemma:conic-bundles}\ref{conic:bundle:4} and Theorem~\ref{theorem:Fano-Weil-index-big}, we obtain

\begin{corollary}
\label{corollary:quartic-weakly-minimal}
Suppose that $d=4$, and there is a~double cover $\pi\colon X\to\PP^1\times\PP^1$ branched over a~curve $B$ of degree $(2,2)$.
Then $\pi$ is $\Aut^0(X)$-equivariant, and the~following are~equivalent:
\begin{enumerate}
\item
the~curve $B$ is reducible and has no irreducible component of degree $(1,2)$ or $(2,1)$;
\item
$\ind(X)>1$.
\end{enumerate}
\end{corollary}
\fi
Now, we prove the~following result.

\begin{proposition}
\label{proposition:ind}
Let $X$ be a Du Val del Pezzo surface and let $d:=K_X^2$.
\begin{enumerate}
\item
\label{proposition:ind=d}
If $\ind(X)=d$, then $d\leqslant 6$ and $X$ is a~hypersurface in $\PP(1,2,3,d)$ of degree $6$ given by
$$
y_3^2+ y_2^3+\lambda_1 y_1^4y_2+\lambda_2 y_1^6 +y_d\phi(y_1,y_2,y_d)=0,
$$
where $\phi$ is a~polynomial of degree $6-d$, and $\lambda_1$ and $\lambda_2\in\Bbbk$ such that $4\lambda_1^3+27\lambda_2^2\neq 0$,
\item\label{proposition:ind=d/2}
If $\ind(X)=\frac{d}2$, then $X$ is a~hypersurface in $\PP(1,1,2,\frac{d}2)$ of degree $4$ given by
\begin{equation*}
\label{eq:index=d/2}
y_2^2+y_1y_1'(y_1-y_1')(y_1-\lambda y_1')+x_3\phi(y_1,y_1',x_3)=0,
\end{equation*}
where $\phi$ is a~polynomial of degree $4-\frac{d}2$, and $\lambda\in\Bbbk$ such that $\lambda\ne 0$ and $\lambda\ne 1$.
\item
\label{proposition:ind=d/3}
If $d=6$ and $\ind(X)=2$, then $X$ is a~hypersurface in $\PP(1,1,1,2)$ of degree $3$ given by
\begin{equation*}
\label{eq:index=d/3}
\psi(y_1,y_1',y_1'') +y_2\phi(y_1,y_1',y_1'')=0,
\end{equation*}
where $\psi$ and $\phi$ are polynomials of degree $3$ and $1$, respectively.
\end{enumerate}
\end{proposition}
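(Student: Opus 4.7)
The plan is to present $X$ as a weighted hypersurface by analysing the graded ring
\[
R(X,A):=\bigoplus_{n\geqslant 0}H^0(X,\OOO_X(nA)),
\]
where $A$ is the primitive ample Weil divisor with $-K_X\sim \ind(X)\,A$. The $\QQ$-factoriality of Du Val singularities makes $A$ a $\QQ$-Cartier $\QQ$-ample divisor, so $X\cong\mathrm{Proj}\,R(X,A)$. By Kawamata--Viehweg vanishing $H^i(nA)=0$ for all $i>0$ and $n\geqslant 0$, so the Hilbert series of $R(X,A)$ is determined by Riemann--Roch for Weil divisors on a Du Val surface, which requires local correction terms at each singular point coming from the local divisor class group. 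The strategy is the same in all three cases: compute $h^0(nA)$ in low degrees; identify a minimal system of homogeneous generators in the stated degrees; deduce that there is a unique relation whose degree matches that of the claimed hypersurface; normalize the coefficients; and derive the genericity condition that is equivalent to $X$ having at worst Du Val singularities.

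For part~\ref{proposition:ind=d}, $A^2=1/d$ and $-K_X\cdot A=1$. The Hilbert series matches that of a degree-$6$ hypersurface in $\PP(1,2,3,d)$: generators $y_1\in H^0(A)$, $y_2\in H^0(2A)$, $y_3\in H^0(3A)$, and one additional $y_d\in H^0(dA)$ that is not a polynomial in the first three (for $d\in\{1,2,3\}$ this extra generator sits in an already-populated degree, while for $d\in\{4,5,6\}$ it is a genuinely new generator). The first degree in which $h^0(nA)$ falls short of $\dim\Bbbk[y_1,y_2,y_3,y_d]_n$ is $n=6$, yielding a unique relation; after rescaling $y_2$ and $y_3$ this relation takes the form
\[
y_3^2+y_2^3+\lambda_1 y_1^4 y_2+\lambda_2 y_1^6+y_d\,\phi(y_1,y_2,y_d)=0,
\]
with $\phi$ of degree $6-d$. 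The slice $\{y_d=0\}$ is the Weierstrass cubic whose smoothness is exactly $4\lambda_1^3+27\lambda_2^2\neq 0$; by openness this is equivalent to $X$ having only Du Val singularities. Finally $d\leqslant 6$ is forced: for $d\geqslant 7$ the term $y_d\,\phi$ does not fit in degree $6$, the equation becomes independent of $y_d$, and $X$ degenerates to a cone, contradicting that it is a del Pezzo surface of degree $d$.

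Parts~\ref{proposition:ind=d/2} and~\ref{proposition:ind=d/3} are established along the same pattern. In (ii), $A^2=4/d$ and $-K_X\cdot A=2$, so $h^0(A)=2$ with generators $y_1,y_1'$; further generators are $y_2\in H^0(2A)$ and $x_3\in H^0(\tfrac{d}{2}A)$, and the unique degree-$4$ relation takes the stated form. The condition $\lambda\notin\{0,1\}$, equivalent to the quartic $y_1y_1'(y_1-y_1')(y_1-\lambda y_1')$ having four distinct roots, is precisely the Du Val condition: it ensures that the double cover $X\to\PP^1$ given by $[y_1:y_1']$ has reduced branch locus. In (iii), $A^2=3/2$ and $-K_X\cdot A=3$, producing three linear generators $y_1,y_1',y_1''$ and one more generator $y_2\in H^0(2A)$, with a single degree-$3$ relation of the claimed form.

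The main obstacle is the careful computation of $h^0(nA)$ when $A$ is not Cartier. The naive Riemann--Roch formula is off by a rational correction term at each singular point, computed from the class of $A$ in the local class group of the singularity; for an $\type{A}_{n-1}$ point this is a sum over fractional parts $\{k/n\}$, and analogous formulas apply in types $\type{D}$ and $\type{E}$. A secondary, closely related step is to verify that the chosen generators really do generate $R(X,A)$---i.e.\ that no hidden generator appears in higher degree---which follows once the Hilbert series of $R(X,A)$ is shown to agree term by term with that of the candidate weighted hypersurface. Once these two ingredients are in place, identifying the defining relation and extracting the genericity conditions on $\lambda_1,\lambda_2,\lambda$ are direct linear-algebra verifications.
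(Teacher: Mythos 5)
Your opening move---realizing $X$ as $\operatorname{Proj}\mathrm{R}(X,A)$ for the divisor $A$ with $-K_X\sim\ind(X)A$---coincides with the paper's, but the way you then propose to analyse $\mathrm{R}(X,A)$ has two genuine gaps. First, matching the Hilbert series of $\mathrm{R}(X,A)$ against that of a candidate weighted hypersurface does \emph{not} prove that $\mathrm{R}(X,A)$ is generated in the stated degrees: a graded ring can have the Hilbert series of $\Bbbk[y_1,y_2,y_3,y_d]/(f)$ while still needing extra generators in higher degree (compensated by extra relations), so surjectivity of the multiplication maps requires an independent argument that you do not supply. Moreover, the singular Riemann--Roch computation of $h^0(nA)$ on which your Hilbert series rests needs the local class of $A$ in $\Cl(X,P)$ at every singular point $P$---data that is not known a priori and would force a long case analysis. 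Second, the genericity conditions are not correctly derived. The slice $\{y_d=0\}$ is one particular member of $|-K_X|$, and a singular anticanonical member is perfectly compatible with $X$ being Du Val (anticanonical curves on Du Val del Pezzo surfaces are frequently cuspidal---see the degree-one analysis in Section~\ref{section:deg-1}); so ``by openness'' does not yield $4\lambda_1^3+27\lambda_2^2\neq 0$, and likewise the condition $\lambda\notin\{0,1\}$ in case \xref{proposition:ind=d/2} does not follow from your branch-locus remark. One must show that $y_d$ can be \emph{chosen} so that its zero locus is a smooth elliptic curve.

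The paper repairs all of this with a single device absent from your proposal: fix a general, hence smooth elliptic, curve $C\in|-K_X|$ and consider the restriction homomorphism $\Phi\colon\mathrm{R}(X,A)\to\mathrm{R}(C,A|_C)$. Kawamata--Viehweg vanishing makes $\Phi$ surjective, its kernel is the principal ideal generated by the degree-$\ind(X)$ equation $y_d$ of $C$, and the ring $\mathrm{R}(C,A|_C)$ of a polarization of degree $1$, $2$ or $3$ on an elliptic curve is classically generated in degrees $(1,2,3)$, $(1,1,2)$ or $(1,1,1)$ with a single Weierstrass-type relation whose discriminant condition (e.g. $4\lambda_1^3+27\lambda_2^2\neq 0$, or $\lambda\neq 0,1$) holds precisely because $C$ is smooth. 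An induction on degree using $\ker\Phi=(y_d)$ then gives generation of $\mathrm{R}(X,A)$, the dimension count $h^0(nA)=h^0(C,nA|_C)+h^0((n-d)A)$ in case \xref{proposition:ind=d} without any singular Riemann--Roch, the uniqueness of the relation, and the genericity conditions, all at once. Your observation that $d\leqslant 6$ in case \xref{proposition:ind=d} (otherwise the relation is independent of $y_d$ and $X$ would be a cone over an elliptic curve) is a sound supplement, but the core of the argument should be rebuilt around the restriction to $C$.
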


\begin{proof}
Let us only prove the~assertion~\ref{proposition:ind=d}, since the~assertions~\ref{proposition:ind=d/2} and~\ref{proposition:ind=d/3} can be proved similarly.
Let $C$ be a~general curve in $|-K_X|$. Then $C$ is a~smooth elliptic curve (see, for example, \cite{Demazure1980}).
Suppose that $\ind(X)=d$. Then $-K_X\sim dA$, where $A$ is a~Weil divisor on $X$.
Consider the~natural homomorphism of graded algebras
$$
\Phi:\mathrm{R}(X,A):=\bigoplus_{n\geqslant 0} H^0\Big(X,\OOO_X\big(nA\big)\Big)\longrightarrow \bigoplus_{n\geqslant 0} H^0\Big(C,\OOO_C\big(nA\big)\Big)=:\mathrm{R}(C,A)
$$
By the~Kawamata--Viehweg vanishing it is surjective. Note that $\OOO_C(A)$ is a~line bundle of degree~$1$.
It is well-known that $\mathrm{R}(C,A)$ is generated by $3$ elements $\bar{y}_1$, $\bar{y}_2$, $\bar{y}_3$ with $\deg \bar{y}_i=i$ such that
$$
\bar{y}_3^2+\bar{y}_2^3+ \lambda_1 \bar{y}_1^4\bar{y}_2 +\lambda_2 \bar{y}_1^6=0
$$
for some $\lambda_1$ and $\lambda_2$ in $\Bbbk$ such that $4\lambda_1^3+27\lambda_2^2\neq 0$.
The kernel of $\Phi$ is generated by a~homogeneous element $y_d$ of degree $d$.
Take arbitrary elements $y_1$, $y_2$ and $y_3$ in $\mathrm{R}(X,A)$ such that $\Phi(y_i)=\bar{y}_i$.
Then $\mathrm{R}(X,A)$ is generated by $y_1$, $y_2$, $y_3$ and $y_d$. This gives us an embedding
$$
X\cong\operatorname{Proj}\Big(\mathrm{R}\big(X,A\big)\Big) \hookrightarrow \operatorname{Proj}\Big(\Bbbk\big[y_1,y_2,y_3,y_d\big]\Big) \cong\PP(1,2,3,d)
$$
whose image is given by an equation of the~required form.
\end{proof}

\begin{remark}
\label{remark:canonical-embedding}
The embedding of the~surface $X$ described in Proposition~\ref{proposition:ind} is almost canonical.
It~only depends on the~choice of the~divisor class $A\in\Cl(X)$ such that $-K_X\sim \ind(X)A$, which~is uniquely defined modulo $\ind(X)$-torsion.
Thus, this embedding is $\Aut^0(X)$-equivariant.
\end{remark}

Using Proposition~\ref{proposition:ind}, we can describe many del Pezzo surfaces:

\begin{example}
\label{example:cubic-surface-2A2}
Suppose that $d=3$, $\ind(X)=3$, and $\Type(X)=2\type{A}_2$.
Using Proposition~\ref{proposition:ind}, we see that $X$ is a~hypersurface in $\PP(1,2,3,3)$ of degree $3$ that is given by
$$
y_3y_3'=y_2(y_2-y_1^2)(y_2-\lambda y_1^2),
$$
where $\lambda\in\Bbbk\setminus \{ 0, \, 1\}$.
\end{example}

\begin{example}
\label{example:Iskovskikh-surface}
Suppose that $d=4$, $\ind(X)=2$, and $\Type(X)=2\type{A}_1$.
Using Proposition~\ref{proposition:ind}, we see that $X$ is a~hypersurface in $\PP(1,1,2,2)$ of degree $4$ that is given by
$$
y_2y_2'=y_1y_1'(y_1'-y_1)(y_1'-\lambda y_1),
$$
where  $\lambda\in\Bbbk\setminus \{ 0, \, 1\}$. This surface is known as \emph{the~Iskovskikh surface} (see \cite{KunyavskijSkorobogatovTsfasman}).
\end{example}

Similarly, we can use Proposition~\ref{proposition:ind} to prove the~following result:

\begin{theorem}
\label{theorem:index>1}
Let $X$ be a Du Val del Pezzo surface, let $d:=K_X^2$.
Suppose that $d\geqslant 3$, $\ind(X)>1$, and the~surface $X$ is singular.
Then $X$ is a~hypersurface in a~weighted projective space $\PP$ such that one of the~following possibilities holds:
\newline\begin{center}\renewcommand{\arraystretch}{1.3}\rm
\begin{longtable}{|c|c|c|c|c|c|c|}
\hline
\quad \textnumero\quad&\quad$d$\quad\quad&\quad$\uprho$\quad\quad&\quad$\Type$\quad\quad&\quad$\ind$\quad\quad&\quad$\PP$\quad\quad&\quad\quad equation of $X$\quad\quad \quad \\
\hline
\ref{d=3:2A2}&\multirow3{*}{$3$}&$3$&$2\type{A}_2$&\multirow3{*}{$3$}&\multirow3{*}{$\PP(1,2,3,3)$ }&see Example~\ref{example:cubic-surface-2A2}\\
\cline{1-1}\cline{3-4}\cline{7-7}
\ref{d=3:2A2-A1}&&$2$&$2\type{A}_2\type{A}_1$&&&$y_3y_3'=y_2^2(y_2+y_1^2)$
\\
\cline{1-1}\cline{3-4}\cline{7-7}
\ref{d=3:3A2}&&$1$&$3\type{A}_2$&&&$y_3y_3'=y_2^3$
\\
\cline{1-1}\cline{3-4}\cline{7-7}
\ref{d=3:A5}&&$2$&$\type{A}_5$&&&$y_3^2=y_2^3+y_1^6+y_1y_2y_3'$
\\
\cline{1-1}\cline{3-4}\cline{7-7}
\ref{d=3:A5-A1}&&$1$&$\type{A}_5\type{A}_1$&&&$y_3^2=y_2^3+y_3'y_1y_2$
\\
\cline{1-1}\cline{3-4}\cline{7-7}
\ref{d=3:E6}&&$1$&$\type{E}_{6}$&&&$y_3^2=y_2^3+y_3'y_1^3$
\\
\hline
\ref{d=4:A3-A1}&\multirow3{*}{$4$}&$2$&$\type{A}_3\type{A}_1$&\multirow3{*}{$4$}&\multirow3{*}{$\PP(1,2,3,4)$ }&$y_3^2=y_1^6+y_2y_4$
\\
\cline{1-1}\cline{3-4}\cline{7-7}
\ref{d=4:A3-2A1}&&$1$&$\type{A}_32\type{A}_1$&&&$y_3^2=y_2y_4$
\\
\cline{1-1}\cline{3-4}\cline{7-7}
\ref{d=4:D5}&&$1$&$\type{D}_5$&&&$y_3^2=y_2^3+y_1^2y_4$
\\
\hline
\ref{d=4:2A1-8lines}&\multirow{7}{*}{$4$}&$4$&$2\type{A}_1$&\multirow{7}{*}{$2$}&\multirow3{*}{$\PP(1,1,2,2)$ }&see Example~\ref{example:Iskovskikh-surface}
\\
\cline{1-1}\cline{3-4}\cline{7-7}
\ref{d=4:3A1}&&$3$&$3\type{A}_1$&&&$y_2y_2'=y_1^2y_1'(y_1'+y_1)$
\\
\cline{1-1}\cline{3-4}\cline{7-7}
\ref{d=4:4A1}&&$2$&$4\type{A}_1$&&&$y_2y_2'=y_1^2y_1'^2$
\\
\cline{1-1}\cline{3-4}\cline{7-7}
\ref{d=4:A2-2A1}&&$2$&$\type{A}_22\type{A}_1$&&&$y_2y_2'=y_1^3y_1'$
\\
\cline{1-1}\cline{3-4}\cline{7-7}
\ref{d=4:A3-4lines}&&$3$&$\type{A}_3$&&&$y_2^2=y_2'y_1y_1'+y_1^4+y_1'^4$
\\
\cline{1-1}\cline{3-4}\cline{7-7}
\ref{d=4:D4}&&$2$&$\type{D}_4$&&&$y_2^2=y_2'y_1^2+y_1'^4$
\\
\hline
\ref{d=5:A4}&$5$&$1$&$\type{A}_4$&$5$&$\PP(1,2,3,5)$&$y_3^2+y_2^3+y_1y_5=0$
\\
\hline
\ref{d=6:A2-A1}&\multirow3{*}{$6$}&$1$&$\type{A}_2\type{A}_1$&$6$&$\PP(1,2,3)$&---
\\
\cline{1-1}\cline{3-7}
\ref{d=6:A1-3l}&&$3$&$\type{A}_1$&\multirow2{*}{$2$}&\multirow2{*}{$\PP(1,1,1,2)$ }&$y_1''y_2=y_1y_1'(y_1-y_1')$
\\
\cline{1-1}\cline{3-4}\cline{7-7}
\ref{d=6:2A1}&&$2$&$2\type{A}_1$&&&$y_1''y_2=y_1^2y_1'$
\\
\cline{1-1}\cline{3-7}
\ref{d=6:A2}&&$2$&$\type{A}_2$&$3$&$\PP(1,1,2,3)$&$y_1y_3=y_2^2-y_1'^4$
\\
\hline
\ref{d=8}&$8$&$1$&$\type{A}_1$&$4$&$\PP(1,1,2)$&---
\\
\hline
\end{longtable}
\end{center}
\end{theorem}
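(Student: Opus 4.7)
The plan is to apply Proposition~\ref{proposition:ind} case-by-case, indexed by the ratio $d/\ind(X)$. First I would verify that only the three ratios $d/\ind(X) \in \{1, 2, 3\}$ can occur under the hypotheses. Since $\ind(X) \mid d$, $\ind(X) > 1$ and $d \leq 9$, the a priori possibilities are $d/\ind(X) \in \{1, 2, 3, 4\}$. Ratios $1$ and $2$ are handled directly by Proposition~\ref{proposition:ind}\xref{proposition:ind=d} and \xref{proposition:ind=d/2}, with part~\xref{proposition:ind=d} forcing $d \leq 6$. Ratio $3$ requires $d \in \{6, 9\}$: the case $d = 9$ gives $X = \PP^2$, which is smooth and hence excluded, while $d = 6$, $\ind(X) = 2$ is covered by Proposition~\ref{proposition:ind}\xref{proposition:ind=d/3}. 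The only remaining possibility is $(d, \ind(X)) = (8, 2)$, which I would rule out using the classification of weak del Pezzo surfaces recalled at the start of Section~\ref{section:preliminaries}: the only singular Du Val del Pezzo of degree $8$ is the quadric cone, whose Weil divisor class group is generated by a ruling $L$ with $-K_X \sim 4L$, giving $\ind(X) = 4 \neq 2$.

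In each of the three surviving families I would substitute the model from Proposition~\ref{proposition:ind} and simplify: eliminate any variable that appears linearly in the defining equation (for instance, when $d = 6$, $\ind(X) = 6$ the polynomial $\phi$ has degree $0$, so $y_6$ appears linearly and can be solved for, collapsing $X$ onto $\PP(1,2,3)$; the analogous reduction for $d = 8$, $\ind(X) = 4$ gives $\PP(1,1,2)$), then normalize the residual coefficients using the torus action on the ambient weighted projective space. This yields a low-dimensional family of hypersurfaces in each case. For every normal form I would compute $\Sing(X)$ by the Jacobian criterion, identify each singular point as a specific Du Val type by matching its local analytic equation to the standard models ($\type{A}_n$, $\type{D}_n$, $\type{E}_6$), and compute $\uprho(X)$ either as $10 - d - \#\{(-2)\text{-curves on } \widetilde{X}\}$ or, when the ambient space is toric and well-understood, directly from the divisor class structure of the ambient. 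Matching the resulting data against the Big Table produces the listed rows.

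The main obstacle will be the family $(d, \ind(X)) = (4, 2)$, which accounts for six of the nineteen entries (namely \xref{d=4:2A1-8lines}, \xref{d=4:3A1}, \xref{d=4:4A1}, \xref{d=4:A2-2A1}, \xref{d=4:A3-4lines}, \xref{d=4:D4}). Here the equation has the shape $y_2^2 + y_1 y_1'(y_1 - y_1')(y_1 - \lambda y_1') + x_3 \phi(y_1, y_1', x_3) = 0$ with $\phi$ of degree $2$, and many distinct degenerations occur as $\lambda \to 0$ or $\lambda \to 1$ and as $\phi$ acquires common factors with the quartic in $(y_1, y_1')$; organizing these into exactly the six rows of the table, and verifying $\uprho(X)$ in each subcase, is where the bulk of the combinatorial work lies. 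The other cases, namely $(d, \ind(X)) \in \{(3,3), (4,4), (5,5), (6,6), (6,3), (6,2), (8,4)\}$, follow the same template but are controlled by at most one or two parameters and so are considerably shorter; in particular $(5,5)$ and $(8,4)$ each yield a single surface (\xref{d=5:A4} and \xref{d=8}), while $(6,6)$ and $(6,3)$ each yield one with rigid equation.
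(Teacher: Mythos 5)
Your proposal is correct and follows essentially the same route as the paper, whose proof simply invokes Proposition~\ref{proposition:ind} and then normalizes the resulting equations by coordinate changes (working out only the case $d=3$ explicitly); your organization by the ratio $d/\ind(X)$, the elimination of the extra cases $(d,\ind)=(8,2)$ and $d=9$, and the case-by-case normalization with singularity and Picard-rank checks are exactly the details the paper leaves to the reader. No gaps.
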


\begin{proof}
The required assertion follows from Proposition~\ref{proposition:ind}.
Let us show this in the~case $d=3$. Suppose that $d=3$ and $\ind(X)>1$.
Observe that $\ind(X)$ must divide $d$. Thus, we have $\ind(X)=3$.
By Proposition~\ref{proposition:ind}, $X$ is a~surface in $\PP(1,2,3,3)$ given by
$$
y_3^2+y_2^3+ \lambda_1 y_1^4y_2 +\lambda_2 y_1^6+\lambda_3x_3^2 +\lambda_4x_3y_1y_2+\lambda_5x_3y_1^3=0
$$
for some $\lambda_1$, $\lambda_2$, $\lambda_3$, $\lambda_4$ and $\lambda_5$ in $\Bbbk$.
If $\lambda_3\neq 0$, then completing the~square we reduce this equation to
the~defining equation of one the~surfaces~\ref{d=3:3A2}, \ref{d=3:2A2-A1} or~\ref{d=3:2A2},.
Thus, we may assume that $\lambda_3=0$.

If $\lambda_4\neq 0$, then we can use a~coordinate change
$x_3\mapsto \alpha x_3+\beta y_1^3$ and $y_2\mapsto \gamma y_2+\delta y_1^2$
for appropriate $\alpha$, $\beta$, $\gamma$ and $\delta$ in $\Bbbk$
to reduce our equation to
$$
y_3^2+y_2^3+ \lambda_2 y_1^6+x_3y_1y_2=0.
$$
If $\lambda_2=0$, this equation defines the~surface~\ref{d=3:A5-A1}.
On the~other hand, if $\lambda_2\ne 0$, we can scale the~coordinates to get $\lambda_2=1$,
so that we obtain the~defining equation of the~surface~\ref{d=3:A5}.

We may assume that $\lambda_3=\lambda_4=0$.
If $\lambda_5=0$, then $X$ has a~non-Du Val singularity at $(0:0:0:1)$, so that $\lambda_5\ne 0$.
Then we reduce our equation to the~defining equation of the~surface~\ref{d=3:E6}.
\end{proof}

\begin{remark}
Using Theorem~\ref{theorem:index>1}, we can easily obtain the~anticanonical embedding of the~surface~$X\hookrightarrow\PP^d$.
For instance, if $d=3$ and $\ind(X)=3$, the~map $\PP(1,2,3,3)\dasharrow\PP^3$ given by
$$
\big(y_1:y_2:y_3:y_3'\big)\longmapsto\big(y_1^3:y_1y_2:y_3:y_3'\big)
$$
defines an embedding $X\hookrightarrow \PP^3$, so that $X$ is a~cubic in $\PP^3$ given by
\begin{description}
\item[\ref{d=3:2A2}]
$x_0x_2x_3=x_1(x_1-x_0)(x_1-\lambda x_0)$, where $\lambda\in\Bbbk\setminus \{ 0,\,  1\}$;

\item[\ref{d=3:2A2-A1}]
$x_0x_2x_3=x_1^3+x_0x_1^2$;

\item[\ref{d=3:3A2}]
$x_0x_2x_3=x_1^3$;

\item[\ref{d=3:A5}]
$x_0x_2^2=x_1^3+x_0^3+x_0x_3x_1$;

\item[\ref{d=3:A5-A1}]
$x_0x_2^2=x_1^3 + x_0x_3x_1$;

\item[\ref{d=3:E6}]
$x_0x_2^2=x_1^3+ x_3x_0^2$.
\end{description}
\end{remark}

If $X$ is not weakly minimal, then $\ind(X)=1$.
In particular, if the~del Pezzo surface $X$ is smooth, then $\ind(X)=1$ unless $X\cong\PP^2$ or $X\cong\PP^1\times\PP^1$.
However, if $X$ is weakly minimal and~$d\geqslant 2$, we~cannot immediately conclude that $\ind(X)>1$.
Let us present two examples.

\begin{example}
\label{ex:A3quintic}
Let $X$ be a~quintic del Pezzo surface with $\uprho(X)=2$ admitting a conic bundle contraction $\psi_1\colon X\to \PP^1$.
It is easy to see from Lemma~\ref{extremal-contractions} that $\Type(X)=\type{A}_3$ and the~second extremal contraction is a birational $(1,4)$-contraction $\psi_2\colon X\to \PP^2$.
Then $X$ is weakly-minimal.
We have an $\Aut(X)$-equivariant morphism $\psi=(\psi_1,\psi_2)\colon X\to \PP^1\times \PP^2$ that is finite and birational onto its image,
which is given by
$$
\phi\big(v_0,v_1,u_0,u_1,u_2)=0,
$$
where $\phi$ is a bihomogeneous polynomial such that its degree with respect to $v_0,v_1$ equals $1$,
and its degree with respect to $u_0,u_1,u_2$ equals $2$, since $\psi_2$ is birational, $\psi_1$ is a conic bundle, $\uprho(X)=2$.
Let $P$ be the singular point of the surface $X$, and let $F$ be the fiber of $\psi_1$ that passes through $P$.
We may assume that $\psi(P)=(1: 0;\, 0:1:0)$.
Since $F$ is a multiple fiber of the conic bundle $\psi_1$, we may assume that $F$ is given by $u_2^2=0$.
Then
$$
\phi=u_2^2v_0+q(u_0,u_1,u_2)v_1,
$$
where $q$ is a quadratic form of rank $3$.
Changing coordinates, we may assume that $q=u_0^2+u_1u_2$, so~that $X$ is the~surface~\ref{d=5:A3}.
Let $\ind=\ind(X)$, and let $A$ be a Weil divisor on $X$ such that $-K_X\sim \ind A$.
Then $5=K_X^2=-\ind K_X\cdot A$ and $2=-K_X\cdot C=\ind A\cdot C$,
where $C$ is a general fiber of the~conic bundle~$\psi_1$.
Since $K_X\cdot A$ and $A\cdot C$ are integers, we have $\ind=1$.
\end{example}

\begin{example}
\label{ex:A4A1cubic}
Let $X$ be a~Du Val cubic surface in $\PP^3$ with $\Type(X)=\type{A}_4\type{A}_1$. Then $\uprho(X)=2$,
and~it~follows from \cite{Bruce1979} that $X$ is unique up to isomorphism and can be given by the~equation
$$
x_0x_2x_3+x^2_0 x_1+x^2_1x_3=0,
$$
so that $X$ is the~surfaces~\ref{d=3:A4-A1}.
Observe that $X$ contains exactly four lines:
$$
L_1=\{x_0=x_1=0\},\quad L_2=\{x_1=x_3=0\},\quad L_3=\{x_1=x_2=0\},\quad L_4=\{x_0=x_3=0\}
$$
and $-K_X\sim L_1+L_2+L_3\sim 2L_1+ L_4\sim 4L_1-L_2$. Then $\Cl(X)=\ZZ[L_1]\oplus \ZZ[L_2]$ and $\ind(X)=1$.
\end{example}

Thus, the~surfaces~\ref{d=3:A4-A1} or~\ref{d=5:A3} are weakly minimal and their Fano--Weil index is~$1$.
On the~other hand, we have the~following result:

\begin{theorem}
\label{theorem:Fano-Weil-index-big}
Let $X$ be a Du Val del Pezzo surface and let $d:=K_X^2$.
Suppose  $X$ is weakly minimal, $d\geqslant 3$, and $\ind(X)=1$.
Then $X$ is one of the~surfaces~\ref{d=3:A4-A1} or~\ref{d=5:A3}.
\end{theorem}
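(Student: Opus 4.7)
The plan is to combine the MMP on $X$ with Theorem~\ref{theorem:index>1} and a~direct analysis of the~terminal conic-bundle cases.

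Since $d\geq 3$ and $\ind(X)=1$, Lemma~\ref{lemma:rho=1=>index>1} forces $\uprho(X)\geq 2$, so $X$ admits a~non-trivial extremal Mori contraction $\psi\colon X\to Y$ described by Lemma~\ref{extremal-contractions} and Corollary~\ref{corollary:extremal-contractions-faces}. If $\psi$ is birational, then by Corollary~\ref{corollary:negative-curves-DP} it contracts a~line $E$ via a~$(1,n)$-contraction; weak minimality of $X$ prevents $E$ from lying in the~smooth locus, ruling out the~$(1,1)$-case and forcing $n\geq 2$. By~Remark~\ref{remark:min}, the~target $Y$ is then again a~weakly minimal Du Val del Pezzo, with $K_Y^2=d+n>d$ and $\uprho(Y)=\uprho(X)-1$.

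Iterating this step, I~would reduce to a~``terminal'' weakly minimal surface $Z$ which either has $\uprho(Z)=1$ or admits only conic-bundle extremal contractions. In the~first case Lemma~\ref{lemma:rho=1=>index>1} gives $\ind(Z)=K_Z^2\geq 3$ and $Z$ is listed in Theorem~\ref{theorem:index>1}. In the~second case, fix a~conic bundle $\pi\colon Z\to\PP^1$: its general fiber $F$ satisfies $-K_Z\cdot F=2$, so writing $-K_Z\sim\ind(Z)\,A$ yields $A\cdot F=2/\ind(Z)\in\ZZ_{\geq 0}$, whence $\ind(Z)\in\{1,2\}$. Using Lemma~\ref{extremal-contractions}\ref{extremal-contractions1} to enumerate the~admissible degenerate fibers (each imposing a~singularity of type $\type{A}_1$, $\type{A}_3$, or $\type{D}_n$) together with Lemma~\ref{lemma:conic-bundles} to constrain the~second extremal ray, one produces a~short list of admissible $Z$ for each degree $3\leq K_Z^2\leq 7$. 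Cross-referencing this list with Theorem~\ref{theorem:index>1} isolates exactly the~two candidates with $\ind(Z)=1$, namely~\ref{d=5:A3} and~\ref{d=3:A4-A1}, whose Fano--Weil indices were already verified in Examples~\ref{ex:A3quintic} and~\ref{ex:A4A1cubic}.

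It remains to show that $X$ coincides with its terminal model $Z$, i.e., that no intermediate $(1,n)$-contraction with $n\geq 2$ was performed. For this I~would analyse the~inverse operation: starting from any surface $Z$ appearing in Theorem~\ref{theorem:index>1} or from~\ref{d=5:A3} or~\ref{d=3:A4-A1}, perform a~weighted $(1,n)$-blow up at a~smooth point off all lines, and use Lemma~\ref{lemma:Cl}\ref{lemma:Cl:convex} to decompose $-K_X$ as a~non-negative integer combination of the~augmented collection of lines and compute $\ind(X)$ directly. The~expected outcome is $\ind(X)\geq 2$ for every such blow up, so that the~only way to preserve $\ind(X)=1$ is to take $X=Z$. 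The~hardest part is precisely this last bookkeeping: verifying, across all admissible towers of weighted blow ups compatible with weak minimality, that the~Fano--Weil index is never pulled back down to~$1$; finiteness of the~case analysis follows from $\uprho(X)\leq 7$ together with the~Du Val singularity constraints on del Pezzo surfaces.
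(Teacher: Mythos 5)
Your overall strategy — run the MMP on $X$ down to a terminal weakly minimal model $Z$, classify the possible $Z$, then climb back up controlling the Fano--Weil index — is genuinely different from the paper's. The paper instead splits on $\uprho(X)=2$ versus $\uprho(X)\geqslant 3$: for $\uprho(X)=2$ it writes $-K_X\simQ\alpha_1L_1+\alpha_2L_2$ for the two generating lines and solves the resulting intersection-theoretic system case by case (using the torsion covers of Lemma~\ref{lemma:Cl}\ref{lemma:Cl:tors} to kill the residual cases), and for $\uprho(X)\geqslant 3$ it performs only \emph{one} contraction and exploits the explicit equations of the degree $5$ and $6$ surfaces. Several of your ingredients are sound (weak minimality forces $n\geqslant 2$ in every $(1,n)$-contraction, Remark~\ref{remark:min} propagates weak minimality, and the divisibility $\ind(Z)\mid 2$ in the presence of a conic bundle), but the plan has a gap that is not mere bookkeeping.

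The decisive step is your last one: you assert that the ``expected outcome'' of a weighted $(1,n)$-blow up at a smooth point off the lines of any surface on your list is $\ind(X)\geqslant 2$, so that $X$ must coincide with its terminal model $Z$. This is false, and the two surfaces the theorem is about are themselves the counterexamples. The surface~\ref{d=5:A3} is a $(1,4)$-weighted blow up of $\PP^2$ (Example~\ref{ex:A3quintic}), a $\uprho=1$ surface of index $3$, yet $\ind=1$; the surface~\ref{d=3:A4-A1} contracts via a $(1,5)$-contraction to $\PP(1,1,2)$ (index $4$) and via a $(1,2)$-contraction to the quintic surface~\ref{d=5:A4} (index $5$), yet $\ind=1$. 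So $X$ need \emph{not} equal $Z$, and determining \emph{which} towers of weighted blow ups drag the index down to $1$ is precisely the content of the theorem; it cannot be dispatched by asserting that none do. Relatedly, neither target surface would ever surface as a terminal model in your scheme:~\ref{d=3:A4-A1} admits no conic bundle at all (both of its extremal rays are birational, since both lines have negative self-intersection), and~\ref{d=5:A3} still carries a birational ray, so your MMP would contract both further; they can only be recovered during the climb-back, exactly where the argument breaks. Finally, your terminal list is incomplete as stated: a $\uprho(Z)=1$ terminal model can be the smooth surface $\PP^2$ or the surface $\PP(1,1,2)$, and the only-conic-bundles case includes $\PP^1\times\PP^1$, none of which appear in Theorem~\ref{theorem:index>1} (which classifies \emph{singular} surfaces of index $>1$), so the cross-referencing step does not by itself isolate the index-one candidates.
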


\begin{proof}
Observe that $\uprho(X)\geqslant 2$ by Lemma~\ref{lemma:rho=1=>index>1}.
First, let us consider the~case where $\uprho(X)=2$.
In~this case, the~Mori cone $\NE(X)$ is generated by two lines $L_1$ and $L_2$ such that $L_1\cap L_2\neq \varnothing$.
Without loss of generality, we may assume that $L_1^2\geqslant L_2^2$.
Then
$$
-K_X\simQ \alpha_1L_1+\alpha_2L_2
$$
for some $\alpha_1\in\QQ_{>0}$ and $\alpha_2\in\QQ_{>0}$.
Since $-K_X\cdot L_1=-K_X\cdot L_2=1$ and $K_X^2=d$, we get
\begin{equation}
\label{eq:rho=2}
\left\{\aligned
&\alpha_1+\alpha_2=d,\\
&\alpha_1 L_1^2 +\alpha_2 L_1\cdot L_2=1,\\
&\alpha_1 L_1\cdot L_2 +\alpha_2 L_2^2=1.\\
\endaligned
\right.
\end{equation}
Let $\psi_1\colon X\to Y_1$ and $\psi_2\colon X\to Y_2$ be the~contractions of the~extremal rays that are generated by the~lines $L_1$ and $L_2$, respectively.

Assume that $\psi_1$ is a conic bundle. Since $X$ is weakly minimal, by Lemma~\ref{lemma:conic-bundles}\ref{conic:bundle:3} we have $d\geqslant 4$.
In particular, we see that the~anticanonical model of the surface $X$ is an intersection of quadrics.
Let~$C_1$ be a general fiber of $\psi_1$.
Then $C_1\cdot L_2=1$ and $C_1\sim 2L_1$. Hence, $L_1\cdot L_2=\frac{1}{2}$ and $L_1^2=0$.
Then \eqref{eq:rho=2} gives $\alpha_2=2$, $\alpha_1=d-2$ and $L_2^2=1-\frac{d}{4}$.
To proceed, we may assume that $d\leqslant 6$.
If~$d=5$, then $L_2^2=-\frac{1}{4}$ and $\psi_2$ is an $(1,4)$-contraction by Lemma~\ref{extremal-contractions}, so that $X$ is the~surface~\ref{d=5:A3}.
If~$d=6$,~then $\alpha_1=4$, $L_2^2=-\frac{1}{2}$ and $\psi_2$ is an $(1,2)$-contraction. Then $4L_1+2L_2$ is a~Cartier divisor, so that
$$
-K_X\sim 4L_1+2L_2=2(2L_1+L_2),
$$
which is impossible, since $\ind(X)=1$. Finally, if $d=4$, then $L_2^2=0$ and $\psi_2$ is also a conic bundle.
Since $2L_1+2L_2$ is Cartier, we have $-K_X\sim 2L_1+2L_2$ and so $\ind(X)>1$, a contradiction.

Thus, we may assume that both $\psi_1$ and $\psi_2$ are birational.

Each line $L_1$ and $L_2$ contains exactly one singular point of the~surface $X$ by Lemma~\ref{extremal-contractions}.
Let~$P_1$ be the~singular point contained in $L_1$, and let $P_2$ be the~singular point contained in $L_2$.
By Lemma~\ref{extremal-contractions}, the~points $P_1$ and $P_2$ are singular points of types $\type{A}_{n_1}$ and $\type{A}_{n_2}$
for some positive integers $n_1$ and $n_2$.
Then
$$
-\frac1{n_1+1}=L_1^2\geqslant L_2^2=-\frac1{n_2+1}
$$
by Lemma~\ref{extremal-contractions}, so that $n_1\geqslant n_2\geqslant 1$.

Suppose that $P_1\ne P_2$. Then $L_1\cap L_2$ is a~smooth point of the~surface $X$, so that $L_1\cdot L_2=1$.
Then \eqref{eq:rho=2} gives
$$
\left\{\aligned
&\alpha_1+\alpha_2=d,\\
&-\alpha_1 +\alpha_2 (n_1+1)=n_1+1,\\
&\alpha_1 (n_2+1)-\alpha_2=n_2+1.\\
\endaligned
\right.
$$
Note also that $d+n_1+n_2\leqslant 8$, since $\uprho(\widetilde X)=10-d$.
Eliminating $\alpha_1$ and $\alpha_2$, we get
$$
d(n_1n_2 +n_2 +n_1) = 2n_1n_2 +3n_1+3n_2 +4
$$
This give us the~following solutions:
\begin{itemize}
\item
$d=4$, $n_1=n_2=1$, $\alpha_1=\alpha_2=2$, $-K_X\simQ 2(L_1+L_2)$,
\item
$d=3$, $n_1=n_2=2$, $\alpha_1=\alpha_2=3/2$, $-K_X\simQ \frac 32(L_1+L_2)$,
\item
$d=3$, $n_1=4$, $n_2=1$, $\Type(X)=\type{A}_4\type{A}_1$, $\alpha_1=\frac53$, $\alpha_2=\frac{4}3$, $-K_X\simQ \frac13(5L_1+4L_2)$.
\end{itemize}
If $d=4$, then $2(L_1+L_2)$ is a~Cartier divisor, which gives
$$
-K_X\sim 2(L_1+L_2),
$$
which is impossible.
If $d=3$ and $n_1=n_2=2$, then $X$ is a~cubic surface in $\PP^3$,
so that $X$ contains a~line $L$ such that $L$ passes through $P_1$ and $P_2$ and
$$
-K_X\sim L_1+L_2+L
$$
which gives $-K_X\simQ 3L$, because $-K_X\simQ\frac{3}{2}(L_1+L_2)$.
In this case, the~divisor $3L$ is Cartier, so~that $-K_X\sim 3L$, which contradicts $\ind(X)=1$.
Thus, we conclude that $d=3$, $n_1=4$ and~$n_2=1$. Then $X$ is the~surface~\ref{d=3:A4-A1}.
Hence, to proceed, we may assume that $P_1=P_2$.

Let $n=n_1=n_2$. Then $L_1^2=L_2^2=-\frac1{n+1}$. Moreover, we have $L_1\cdot L_2=\frac{k}{n+1}$ for some $k\in \ZZ_{>0}$.
Then \eqref{eq:rho=2} gives $\alpha_1=\alpha_2=\frac{d}2$ and $2(n+1)=d(k-1)$, so that
$$
-K_X\simQ \textstyle{ \frac{d}2} \big(L_1+L_2\big).
$$
Note also that $d+n\leqslant 8$, since $\uprho(\widetilde X)=10-d$.
Then $d\ne 5$ and $d\ne 7$, because $2(n+1)=d(k-1)$.
Likewise, if $d=6$, then $n=2$, so that $-K_X\simQ 3(L_1+L_2)$ and $\Cl(X)$ is torsion free by Lemma~\ref{lemma:Cl},
which gives $-K_X\sim 3(L_1+L_2)$, which contradicts $\ind(X)=1$.
Hence, either $d=3$ or $d=4$.

Suppose that $d=4$. Then $n=3$, because $2(n+1)=d(k-1)$ and $d+n\leqslant 8$.
Since $\uprho(X)=2$, we see that $X$ has a~singular point of type $\type{A}_1$.
On the~other hand, we have $-K_X\simQ 2(L_1+L_2)$.
Since $\ind(X)=1$, we have $-K_X\not\sim 2(L_1+L_2)$, so that $K_X+2(L_1+L_2)$ is a~non-trivial torsion element in $\Cl(X)$.
Now, applying Lemma~\ref{lemma:Cl}, we obtain a~double cover $\pi\colon Y\to X$ that is \'etale outside of the~point $L_1\cap L_2$.
Then $Y$ is a~del Pezzo surface of degree $8$ such that it contains two singular points of type $\type{A}_1$, which is absurd.
This shows that $d\ne 4$.

Therefore, we see that $d=3$. Since $2(n+1)=d(k-1)$ and $d+n\leqslant 8$, we have $n=2$ or $n=5$.
Since $X$ is a~cubic surface in $\PP^3$, it contains a~line $L$ such that
$$
-K_X\sim L_1+L_2+L,
$$
so that $L\simQ\frac12(L_1+L_2)$, because $-K_X\simQ\frac32(L_1+L_2)$. This gives $-K_X\simQ 3L$. But $-K_X\not\sim 3L$.
In~particular, we have $n\ne 2$, because $3L$ is a~Cartier divisor if $n=2$.
We conclude that $n=5$.
Now, using Lemma~\ref{lemma:Cl}, we conclude that there is a~finite Galois cover $\pi\colon Y\to X$ of degree $r\geqslant 2$,
which is \'etale outside of the~point $L_1\cap L_2$. Here, $r$ be the~order of the~torsion divisor $K_X+3L$.
By~construction, the~surface $Y$ is a~del Pezzo surface of degree $rd$, so that either $r=2$ or $r=3$.
If $r=3$, then $Y\cong\PP^2$, which is impossible, since $\uprho(Y)\geqslant \uprho(X)=2$.
Thus, we have $r=2$. Then
$$
-K_X\sim 3L-(K_X+3L)\sim 3L-(K_X+3L)+4(K_X+3L)\sim 3L+3(K_X+3L)\sim 3(K_X+4L),
$$
which is impossible, since $\ind(X)=1$. The obtained contradiction shows that $\uprho(X)\ne 2$.

We see that $\uprho(X)\geqslant 3$ and $X$ is singular. Then $d\leqslant 6$.

Let $\psi\colon X\to Y$ be an extremal Mori contraction.
Since $\uprho(X)\geqslant 3$, the~morphism $\psi$ is birational,
so that $\psi$ is a $(1,m)$-contraction  of a line $L\subset X$ by Lemma~\ref{extremal-contractions}.
Then $Y$ is a weakly minimal Du Val del Pezzo surface such that $K_Y^2=d+m$ with $\uprho(Y)=\uprho(X)-1\geqslant  2$ (see Remark~\ref{remark:min}).
In~particular, we have $d+m\ne 7$, because a Du Val del Pezzo surface of degree $7$ is not weakly minimal (see Example~\ref{example:dP7-A1}).
Note that $m\geqslant 2$, since $X$ is weakly minimal.

Consider the~case $d=6$.
Using the~Noether formula, we see that $\uprho(X)=3$ and $\Type(X)=\type{A}_1$.
Then $m=2$, $K_Y^2=8$, $\uprho(Y)=2$, so that $Y\cong \PP^1\times\PP^1$.
Then $K_X\sim\psi^* K_Y+2L$ is divisible by~$2$, which is a contradiction.
Thus, we have $d\ne 6$.

Consider the~case $d=5$.
Since $d+m\ne 7$, we have $m>2$.
Then $\uprho(X)=3$ and $\Type(X)=\type{A}_2$ by the~Noether formula,
so that $X$ is not weakly minimal by \cite[Proposition~8.5]{Coray1988}. This contradicts our assumption.

Consider the~case $d=4$.
Then $m\ne 3$,  since $d+m\ne 7$.
If $m>3$, it follows from the~Noether formula that $m=4$, $\uprho(X)=3$ and $\Type(X)=\type{A}_3$,
so that $Y\cong\mathbb{P}^1\times\mathbb{P}^1$, which gives
$$
-K_X\sim 2\big(\widetilde{F}_1+\widetilde{F}_2\big),
$$
where $\widetilde{F}_1$ and $\widetilde{F}_2$ are proper transforms on $X$ of the~curves in $Y$ of bi-degree $(1,0)$ and $(0,1)$
that contains $\psi(L)$, respectively.
This contradicts our assumption $\ind(X)=1$. So, we see that all extremal contractions on $X$ are birational $(1,2)$-contractions.
Then $\ind(Y)>1$, since we already dealt with sextic del Pezzo surfaces.
By Theorem~\ref{theorem:index>1}, we see that  $Y$ is one of the~surfaces \ref{d=6:A1-3l}, \ref{d=6:2A1}, \ref{d=6:A2}.
If $Y$ is the~surface~\ref{d=6:A2}, then it has a birational $(1,3)$-contraction,
so that $X$ also has a birational $(1,3)$-contraction by Corollary~\ref{corollary:extremal-contractionsDP},
which is a contradiction.
Then $Y$ is one of the~surfaces \ref{d=6:A1-3l} or \ref{d=6:2A1}.
By Lemma~\ref{lemma:Cl}\ref{lemma:Cl:gens}, we~have
$$
-K_Y\sim 2\sum_{i=1}^{s} a_i M_i
$$
for some lines $M_1,\ldots,M_s$ on the~surface $Y$ and some integers $a_1,\ldots,a_s$.
Since $\psi(L)$ is a smooth point of $Y$ that does not lie on a line by Corollary~\ref{corollary:extremal-contractionsDP},
we obtain
$$
-K_X\sim 2\sum_{i=1}^s a_i \widetilde{M}_i-2L\sim2\Big(\sum_{i=1}^s a_i \widetilde{M}_i-L\Big),
$$
where $\widetilde{M}_i$ is a proper transform on $X$ of the~line $M_i$. This contradicts our assumption $\ind(X)=1$.

Finally, we consider the~case $d=3$.
Then $m\ne 4$,  since $d+m\ne 7$.
Moreover, since $X$ is weakly minimal, there exists no dominant morphisms from $X$ to a curve by Lemma~\ref{lemma:conic-bundles}\ref{conic:bundle:3}, and the~same holds for $Y$.
Using this, we conclude that $m\ne 5$.
Thus, we have the~following possibilities:
\begin{itemize}
\item either $K_Y^2=5$ and $m=2$,
\item or $K_Y^2=6$ and $m=3$.
\end{itemize}
Moreover, if $K_Y^2=5$, then $Y$ is not the~surface \ref{d=5:A3}, because del Pezzo surface \ref{d=5:A3} admits a~dominant morphism to $\PP^1$ (see Example~\ref{ex:A3quintic}).
Therefore, we conclude that $\ind(Y)>1$, because we already dealt with weakly minimal Du Val del Pezzo surfaces of degree $5$ and $6$.

Now, using Theorem~\ref{theorem:index>1}, we see that $K_Y^2\ne 5$, because $\uprho(Y)>1$.
Therefore, we have $K_Y^2=6$. Then $Y$  is the~surface \ref{d=6:A2}, \ref{d=6:2A1} or \ref{d=6:A1-3l} again by Theorem~\ref{theorem:index>1}.
If $Y$  is the~surface \ref{d=6:2A1}, then
$$
\frac{y_1}{y_1''}=\frac{y_1'^2}{y_2}
$$
on the surface $Y$, so that the~map $Y\dashrightarrow \PP^1$ given by
$$
(y_1:y_1':y_1'':y_2)\longmapsto(y_1:y_1'')=(y_1'^2:y_2)
$$
is a morphism, which is a contradiction. Similarly, we obtain a contradiction when $Y$  is the~surface \ref{d=6:A1-3l},
because the~map $Y \dashrightarrow \PP^1$ given by
$$
(y_1:y_1':y_1'':y_2)\longmapsto(y_1:y_1')=(y_1''(y_1'+y_1''):y_2)
$$
is a morphism in this case. Thus, we see that $Y$ is the~surface \ref{d=6:A2}.
Then $X$ is a cubic surface such that $\Type(X)=2\type{A}_2$.
Now, using \cite{Bruce1979}, we conclude that $X$ is one of the~surfaces~\ref{d=3:2A2},
so that $\ind(X)=3$ by Theorem~\ref{theorem:index>1},
which contradicts our assumption.
\end{proof}

\section{The proof of Main Theorem: higher degree cases}
\label{section:dP-large-degree1}
Let $X$ be a Du Val del Pezzo surface of degree $d$ whose automorphism group $\mathrm{Aut}(X)$ is infinite.
If $d\geqslant 8$, then $X$ is either
$\PP^2$, $\PP^1\times\PP^1$,  $\FF_1$ or $\PP(1,1,2)$.
In each of this case, the~corresponding automorphism group is well-known and listed in Big Table
(cases
\ref{d=9:P2}, \ref{d=8:P1-P1}, \ref{d=8:F1}, \ref{d=8} respectively).

If $d\leqslant 7$, we have an $\Aut^0(X)$-equivariant diagram \eqref{equation:min-resolution}.
If $d=7$, then $X$ is one of the~del Pezzzo surfaces \ref{d=7} and \ref{d=7:smooth},
and the~morphism $\varphi$ in \eqref{equation:min-resolution} is a blow up of two (possibly infinitely near) points.
From this we obtain the~following

\begin{lemma}
\label{lemma:DP7}
Let $X$ be a Du Val del Pezzo surface of degree $7$,
and let $U$ be the~complement in the~surface $X$ to the~union of all lines.
Then Main Theorem holds for $X$, the~subset $U$ is the~open orbit of the~group $\Aut^0(X)$, and $U\cong\Aff^2$.
\end{lemma}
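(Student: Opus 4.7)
The plan is to treat the two possibilities for $X$ separately using the $\Aut^0(X)$-equivariant diagram \eqref{equation:min-resolution}. Since $K_X^2=7$, we have $\uprho(\widetilde{X})=3$, so $\varphi\colon\widetilde{X}\to\PP^2$ blows up exactly two points, which are either distinct (in which case $X=\widetilde{X}$ is the~surface~\ref{d=7:smooth}) or infinitely near (in which case $X$ is the~singular surface~\ref{d=7} of Example~\ref{example:dP7-A1}).

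In the smooth case, equivariance of \eqref{equation:min-resolution} identifies $\Aut^0(X)$ with the connected component of the stabilizer in $\PGL_3(\Bbbk)$ of the unordered pair $\{P_1,P_2\}$; choosing coordinates so that $P_1=(1:0:0)$ and $P_2=(0:1:0)$, this component is
\[
\left\{\left.\begin{pmatrix}a&0&e\\ 0&b&f\\ 0&0&1\end{pmatrix}\ \right|\ a,b\in\Bbbk^{\ast},\ e,f\in\Bbbk\right\}\cong\BB_2\times\BB_2,
\]
and acts transitively on the affine chart $\{x_2\neq 0\}=\PP^2\setminus\overline{P_1P_2}$. In the singular case, Example~\ref{example:dP7-A1} already gives $\Aut^0(X)\cong\BB_3$ as the stabilizer in $\PGL_3(\Bbbk)$ of a flag $(\ell,P)$ with $P\in\ell$; its open orbit on $\PP^2$ is $\PP^2\setminus\ell\cong\Aff^2$.

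Next, I would pull these open orbits back along $\varphi$. In both cases every centre of $\varphi$ lies on the distinguished line (in the infinitely near case, the second centre $Q\in E_1$ corresponds precisely to the tangent direction along $\ell$), so $\varphi^{-1}$ of the affine chart is an isomorphism onto its image, while the preimage of the distinguished line is set-theoretically the union of all $(-1)$-curves on $\widetilde{X}$ together with the unique $(-2)$-curve present in the singular case. Composing with $\mu$ contracts the $(-2)$-curve to the $\type{A}_1$ singularity and sends each $(-1)$-curve to a line on $X$; hence $\mu\circ\varphi^{-1}$ restricts to an isomorphism from $\Aff^2$ onto $U$, which exhibits $U$ as the open $\Aut^0(X)$-orbit and proves $U\cong\Aff^2$.

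The Main Theorem for $d=7$ then follows immediately: $\Aut^0(X)$ is positive-dimensional in both cases (so $\Aut(X)$ is infinite), and the two surfaces~\ref{d=7:smooth} and~\ref{d=7} appear in Big Table with precisely the groups $\BB_2\times\BB_2$ and $\BB_3$ recovered above. The only mildly delicate point is the blow-up bookkeeping in the infinitely near case — verifying that the proper transforms of $\ell$ and of $E_1$, together with $E_2$, account for all the negative curves catalogued in Example~\ref{example:dP7-A1} — which amounts to a quick self-intersection computation on $\FF_1$ followed by the second blow-up.
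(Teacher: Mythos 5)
Your argument is correct and is essentially the proof the paper intends: the paper derives the lemma directly from the observation that $\varphi$ in \eqref{equation:min-resolution} blows up two points that are either distinct or infinitely near, together with Example~\ref{example:dP7-A1} for the singular case, which is exactly your case division and your identification of $\Aut^0(X)$ with the stabilizer of the pair of centres (giving $\BB_2\times\BB_2$ and $\BB_3$, matching entries~\ref{d=7:smooth} and~\ref{d=7} of Big Table). Your pullback of the open orbit $\PP^2\setminus\ell\cong\Aff^2$ through $\varphi$ and $\mu$ to identify $U$ is the same bookkeeping the paper leaves implicit, and it checks out against the dual graph in Example~\ref{example:dP7-A1}.
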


All del Pezzo surfaces of degree $6$ have infinite automorphism groups, so that all of them appear in our Big Table.
These are the~del Pezzo surfaces~\ref{d=6:A2-A1}, \ref{d=6:A2}, \ref{d=6:2A1}, \ref{d=6:A1-3l}, \ref{d=6:A1-4l} and~\ref{d=6:smooth}.
Going through these six cases one by one, we obtain

\begin{lemma}
\label{corollary:d-6}
Let $X$ be a Du Val del Pezzo surface of degree $6$,
and let $U$ be the~complement in the~surface $X$ to the~union of all lines.
Then Main Theorem holds for $X$, the~subset $U$ is the~open orbit of the~group $\Aut^0(X)$, and $U\cong \Aff^2$ in the~cases~\ref{d=6:A2-A1}, \ref{d=6:A2}, \ref{d=6:2A1}, and \ref{d=6:A1-3l}.
\end{lemma}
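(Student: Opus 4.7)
The plan is to dispatch the six isomorphism types of Du Val del Pezzo surfaces of degree $6$ --- namely \ref{d=6:A2-A1}, \ref{d=6:A2}, \ref{d=6:2A1}, \ref{d=6:A1-3l}, \ref{d=6:A1-4l}, \ref{d=6:smooth} --- one at a time. The Main Theorem part reduces, for each $X$, to exhibiting an infinite connected subgroup of $\Aut(X)$ and matching it with the corresponding Big Table entry. The substantive content of the lemma beyond this is (i) that the complement $U$ of the union of lines is the open $\Aut^0(X)$-orbit, and (ii) that $U\cong\Aff^2$ in four of the six cases.

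For the four surfaces \ref{d=6:A2-A1}, \ref{d=6:A2}, \ref{d=6:2A1}, \ref{d=6:A1-3l}, Theorem~\ref{theorem:index>1} supplies an explicit weighted projective embedding of $X$, and by Remark~\ref{remark:canonical-embedding} this embedding is $\Aut^0(X)$-equivariant. I would read off the lines directly from the equation (cross-checking with Appendix~\ref{section:appendix}), exhibit $U$ as an affine open inside the ambient weighted projective space by inverting a suitable product of coordinates, and identify $U$ with $\Aff^2$ by a direct calculation. For instance, in case \ref{d=6:A2-A1} one has $X\cong\PP(1,2,3)$, the only line is $\{y_1=0\}$, and
\[
U=X\setminus\{y_1=0\}\cong\operatorname{Spec}\Bbbk\bigl[y_2/y_1^2,\ y_3/y_1^3\bigr]\cong\Aff^2.
\]
The connected group $\Aut^0(X)$ acts on these coordinates by affine transformations combining a diagonal torus with unipotent subgroups read off from the monomials permitted by the defining equation; together with Corollary~\ref{corollary:connected-group-rational-surface}, which bounds the structure of $\Aut^0(X)$, dimension counting then forces transitivity on $U$. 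The three remaining indexed cases are handled in exactly the same style, working in $\PP(1,1,1,2)$ or $\PP(1,1,2,3)$ as dictated by Theorem~\ref{theorem:index>1}.

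For the remaining two surfaces I would argue via diagram~\eqref{equation:min-resolution}. The smooth case \ref{d=6:smooth} is the classical toric del Pezzo of degree $6$: $\Aut^0(X)\cong\Gm^2$ acts with open orbit equal to the complement of the six lines, and this orbit is $\Gm^2$, not $\Aff^2$. For \ref{d=6:A1-4l}, the morphism $\varphi$ in~\eqref{equation:min-resolution} realises $\widetilde X$ as a blow-up of $\PP^2$ at a prescribed configuration of three (possibly infinitely near) points, and $\Aut^0(X)$ is recovered as the subgroup of $\Aut(\PP^2)$ preserving this configuration; the open orbit is again the complement of the lines, but is isomorphic to $\Gm\times\Ga$ rather than $\Aff^2$, which is why these two surfaces are excluded from the strong conclusion. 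The main obstacle throughout is the bookkeeping required to explicitly produce the unipotent one-parameter subgroups of $\Aut^0(X)$ in the four indexed cases and to verify that, combined with the obvious torus, they already act transitively on $U$; once transitivity is established, the identification $U\cong\Aff^2$ follows immediately from the weighted projective description.
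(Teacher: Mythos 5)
Your proposal is correct in substance but takes a genuinely different route from the paper for the four indexed cases. The paper treats all six surfaces uniformly through the $\Aut^0(X)$-equivariant diagram~\eqref{equation:min-resolution}: it identifies, for each type, the configuration of three (possibly infinitely near) points of $\PP^2$ blown up by $\varphi$, reads off $\Aut^0(X)$ as the stabilizer of that configuration in $\PGL_3(\Bbbk)$ (giving $\BB_3$, $\UU_3\rtimes\Gm$, $\BB_2\times\BB_2$, $\Ga^2\rtimes\Gm$, $\BB_2\times\Gm$, $\Gm^2$), and then disposes of the orbit assertions in one line by quoting Corollary~\xref{corollary:Ga} for the four cases admitting a faithful $\Ga^2$-action and Lemma~\xref{lemma:DP7} for \ref{d=6:A1-4l} and \ref{d=6:smooth}. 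Your explicit weighted-projective computations via Theorem~\xref{theorem:index>1} and Remark~\xref{remark:canonical-embedding} buy concreteness (explicit coordinates on $U$ and explicit one-parameter subgroups), but at two costs you should be aware of. First, Theorem~\xref{theorem:index>1} is a classification of surfaces \emph{given} their type and index; it does not by itself tell you that the six listed types exhaust all Du Val del Pezzo surfaces of degree $6$, which is needed for the clause ``Main Theorem holds for $X$'' and which the blow-up description supplies for free by enumerating admissible point configurations. Second, ``dimension counting forces transitivity'' is not quite right as stated ($\Gm^2$ acts on $\Aff^2$ with open orbit a proper subset); what actually closes the argument is exhibiting a copy of $\Ga^2$ acting by translations on the affine chart, which is precisely what the paper's Corollary~\xref{corollary:Ga} packages --- since you do promise to produce the unipotent subgroups explicitly, this is a matter of phrasing rather than a gap. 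Your treatment of \ref{d=6:smooth} and \ref{d=6:A1-4l} coincides with the paper's, and the extra identifications of their open orbits ($\Gm^2$ and $\Gm\times\Ga$) are correct but not required by the statement.
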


\begin{proof}
If $X$ is the~surface~\ref{d=6:smooth}, then $\varphi$ in \eqref{equation:min-resolution} is a blow up of three distinct non-collinear points,
so that $X$ is toric and $\Aut^0(X)\cong \Gm^2$.

Likewise, if $X$ is the~surface~\ref{d=6:A1-3l}, then $\varphi$ is the~blow up of three distinct collinear points in~$\PP^2$.
Using this, it is not hard to see that $\Aut^0(X)\cong \Ga^2\rtimes\Gm$ in this case.

For infinitely near points we use the~notation of \cite{Dolgachev}.

If $X$ is the~surface~\ref{d=6:A2-A1}, then the~morphism $\varphi$ is the~blow up of three infinitely near collinear points $P_1\prec P_2\prec P_3$ in the~plane $\PP^2$ in the~notations of \cite{Dolgachev},
which implies that $\Aut^0(X)\cong \BB_3$.

Similarly, if $X$ is the~surface~\ref{d=6:A2}, then $\varphi$ is the~blow up of three infinitely near non-collinear points $P_1\prec P_2\prec P_3$,
which implies that $\Aut^0(X)\cong \UU_3\rtimes \Gm$.

If $X$ is the~surface~\ref{d=6:2A1}, then $\varphi$ is the~blow up of three collinear points $P_1$, $P_2$ and $P_3$ such that the~points $P_1$ and $P_2$ are distinct and $P_3\succ P_1$,
which implies that $\Aut^0(X)\cong \BB_2\times\BB_2$.

Finally, if $X$ is the~surface~\ref{d=6:A1-4l}, then $\varphi$ is the~blow up of three non-collinear points $P_1$, $P_2$ and~$P_3$,
so that $P_1$ and $P_2$ are distinct, but $P_3\succ P_1$.
Hence, in this case, we have $\Aut^0(X)\cong \BB_2\times \Gm$.

The last assertions follow from Corollary~\ref{corollary:Ga} in the~cases~\ref{d=6:A2-A1}, \ref{d=6:A2}, \ref{d=6:2A1}, and \ref{d=6:A1-3l},
and it follows from  Lemma~\ref{lemma:DP7} in~the~cases \ref{d=6:A1-4l} and~\ref{d=6:smooth}.
\end{proof}

Similarly, all singular del Pezzo surfaces of degree $5$ also have infinite automorphism groups.
These are the~surfaces \ref{d=5:A4}, \ref{d=5:A3}, \ref{d=5:A2-A1}, \ref{d=5:A2},  \ref{d=5:2A1} and \ref{d=5:A1} in Big Table.

\begin{lemma}
\label{lemma:d=5}
Let $X$ be a Du Val del Pezzo surface of degree $5$,
and let $U$ be the~complement in the~surface $X$ to the~union of all lines.
Then Main Theorem holds for $X$, the~subset $U$ is the~open orbit of the~group $\Aut^0(X)$
in the~cases~\ref{d=5:A4}, \ref{d=5:A3}, \ref{d=5:A2-A1}, \ref{d=5:A2}, \ref{d=5:2A1},
and $U\cong \Aff^2$ in the~cases~\ref{d=5:A4}, \ref{d=5:A3}.
\end{lemma}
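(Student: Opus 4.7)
The plan is to treat the six cases one at a time via the $\Aut^0(X)$-equivariant diagram \eqref{equation:min-resolution}. Since $d=5\leqslant 7$, the morphism $\varphi$ presents $\widetilde X$ as the blow up of $\PP^2$ at four (possibly infinitely near) points, and the mutual position of these points is uniquely determined by the pair $(\Type(X),\numl(X))$ via the standard classification of weak del Pezzo surfaces of degree~$5$. In each case I read off $\Aut^0(X)$ as the connected component of the stabilizer in $\PGL_3(\Bbbk)$ of this marked configuration; the resulting groups match the entries in Big Table and are all positive-dimensional, which proves Main Theorem for these surfaces.

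For the open orbit assertion, in the cases \ref{d=5:A2-A1} and~\ref{d=5:A2} the four marked points on $\PP^2$ lie (together with the infinitely near data) on a single line, so the unipotent translation subgroup of $\PGL_3$ fixing that line pointwise lifts to $\widetilde X$ and descends to a faithful $\Ga^2$-action on $X$; Corollary~\ref{corollary:Ga} then identifies the open orbit with the complement of the union of lines. The case \ref{d=5:2A1} is toric with $\Aut^0(X)\cong\Gm^2$, and the open orbit coincides with the complement of the torus-invariant divisors, which are exactly the lines by Lemma~\ref{lemma:Cl}\ref{lemma:Cl:gens}.

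The cases \ref{d=5:A4} and \ref{d=5:A3} admit a more explicit treatment. In the $\type{A}_4$ case, Theorem~\ref{theorem:index>1} gives
\[
X=\big\{y_3^2+y_2^3+y_1y_5=0\big\}\subset\PP(1,2,3,5).
\]
The unique line is $\{y_1=0\}$, and on the affine chart $\{y_1\ne 0\}\cong\Aff^3$ with coordinates $(y_2/y_1^2,y_3/y_1^3,y_5/y_1^5)$ the defining equation solves for $y_5$ and exhibits $U=X\setminus\{y_1=0\}\cong\Aff^2$. Example~\ref{ex:A3quintic} treats \ref{d=5:A3} in an analogous manner starting from its $\PP^1\times\PP^2$ model: on the chart $\{u_2v_1\ne 0\}$ the equation solves for $v_0$ and parametrizes $U$ by $(u_0,u_1)\in\Aff^2$.

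The main obstacle will be the bookkeeping of the blow-up configurations on $\PP^2$ in each singularity type and verifying that the connected stabilizer in $\PGL_3$ matches the group listed in Big Table. The case \ref{d=5:A1} requires the most care, since the singularity is mildest and $\numl(X)$ is largest, so one must pin down the precise position of the four marked points and exclude accidental extra automorphisms; here the lemma only asserts Main Theorem, consistent with the absence of a large unipotent action on this surface.
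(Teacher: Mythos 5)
Your overall strategy (read off $\Aut^0(X)$ as the connected stabilizer in $\PGL_3(\Bbbk)$ of the blown-up configuration, then handle the orbit statements case by case) is workable, and your chart computations exhibiting $U\cong\Aff^2$ for \ref{d=5:A4} and \ref{d=5:A3} are correct. However, there is a genuine error in your treatment of the cases \ref{d=5:A2-A1} and \ref{d=5:A2}: these surfaces do \emph{not} admit a faithful $\Ga^2$-action. Three independent checks show this. First, Corollary~\ref{corollary:Ga} would force $\numl(X)=\uprho(X)$, whereas these surfaces have $(\uprho,\numl)=(2,3)$ and $(3,4)$ respectively. Second, their automorphism groups are $\BB_2\times\Gm$ and $\BB_2$, whose unipotent radicals are one-dimensional. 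Third, your geometric premise is impossible: if all four blown-up points (counted with the infinitely near data) lay on a single line of $\PP^2$, the proper transform of that line on $\widetilde X$ would be a $(-3)$-curve, contradicting the fact that $\widetilde X$ is a weak del Pezzo surface. So the mechanism you propose for the open-orbit assertion in these two cases collapses, and Corollary~\ref{corollary:Ga} cannot be invoked there.

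The paper gets these cases (together with \ref{d=5:2A1}) by a different route: each such $X$ is the blow-up $\psi\colon X\to Y$ of a degree-$6$ surface at a smooth point $P$ lying in the open $\Aut^0(Y)$-orbit and not on a line (Corollary~\ref{corollary:extremal-contractionsDP}), so $\Aut^0(X)$ is the connected stabilizer of $P$ in $\Aut^0(Y)$, and the orbit description descends from Lemma~\ref{corollary:d-6}. Note in this descent that the complement of the lines on $X$ is strictly smaller than $\psi^{-1}(U_Y\setminus\{P\})$, since new lines appear through $P$ (e.g.\ $\numl$ jumps from $2$ to $5$ in case \ref{d=5:2A1}); these new lines are exactly the closures of the one-dimensional orbits of the stabilizer, which is what makes the open-orbit claim come out right. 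Your toric argument for \ref{d=5:2A1} is fine. For \ref{d=5:A4} and \ref{d=5:A3} you should still justify transitivity of $\Aut^0(X)$ on $U$, which the paper does by exhibiting an explicit $\Ga^2$ inside $\Aut^0(X)$ (translations in the affine chart) and then applying Corollary~\ref{corollary:Ga}; and the deferred case \ref{d=5:A1} still needs the actual verification that the stabilizer of its configuration is $\Gm$.
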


\begin{proof}
If $X$ is weakly minimal and $\ind(X)>1$, then $X$ is the~surface \ref{d=5:A4} by Theorems~\ref{theorem:Fano-Weil-index-big} and~\ref{theorem:index>1}.
Then the~group $\Aut^0(X)$ consists of the~transformations that send the point $(y_1:y_2:y_3:y_5)$ to
$$\hspace*{-0.4cm}
\Big(y_1:t^2y_2+ay_1^2:t^3y_3+by_1^3+cy_1y_2:t^6y_5-(a^3+b^2)y_1^5-(3a^2t^2+2bc)y_1^3y_2-2bt^3y_1^2y_3-(3at^4+c^2)y_1y_2^2-2ct^3y_2y_3\Big),
$$
where $t\in\Bbbk^\ast$ and $a,b,c\in\Bbbk$.
This gives $\Aut^0(X)\cong\UU^3\rtimes\Gm$ (cf. Corollary~\ref{corollary:connected-group-rational-surface}, see Corollary~\ref{Cor:Cl=Z:Aut}).

If $X$ is weakly minimal and $\ind(X)=1$, then $X$ is the~del Pezzo surface \ref{d=5:A3}  by Theorem~\ref{theorem:Fano-Weil-index-big},
and its $\mathrm{Aut}^0(X)$-equivariant embedding $X\hookrightarrow\PP^1\times \PP^2$ is described in Example~\ref{ex:A3quintic}.
In~this case, the~group $\Aut(X)$ contains a~two-dimensional unipotent subgroup
\[
(v_0:v_1;\, u_0:u_1:u_2)\longmapsto\big(v_0-(a_1^2+a_2)v_1:v_1;\, u_0+a_1u_2:u_1-2a_1u_0+a_2 u_2:u_2\big)
\]
and a one-dimensional torus
\[
(v_0:v_1;\, u_0:u_1:u_2)\longmapsto (v_0:t^{-2} v_1;\, t u_0:t^2 u_1: u_2),
\]
where $a_1,a_2\in\Bbbk$ and $t\in\Bbbk^\ast$. This implies that $\Aut(X)\cong\Ga^2\rtimes\Gm$ as required.

We may assume that $X$ is not weakly minimal.
Then there is a birational morphism $\psi\colon X\to Y$
such that $Y$ is one of the~surfaces \ref{d=6:A2-A1}, \ref{d=6:A2}, \ref{d=6:2A1}, \ref{d=6:A1-3l},
and $\psi$ is a blow up of a smooth point $P\in Y$.
By~Corollary~\ref{corollary:extremal-contractionsDP} and Lemma~\ref{corollary:d-6},
the~point $P$ is contained in the~open orbit of the~group $\mathrm{Aut}^0(Y)$.
Since $\Aut^0(X)$ is the~connected component of the~stabilizer in $\Aut^0(Y)$ of the~point $P$,
this gives the~required description of the~group $\mathrm{Aut}^0(X)$ in Big Table.

The last assertions follow from Corollary~\ref{corollary:Ga} in the~cases~~\ref{d=5:A4}, \ref{d=5:A3},
and it follows from Lemma~\ref{corollary:d-6} in the~cases \ref{d=5:A2-A1}, \ref{d=5:A2}, \ref{d=5:2A1}.
\end{proof}

Let us conclude this section by proving Main Theorem for Du Val del Pezzo surfaces of degree~$4$.

\begin{proposition}
\label{proposition:d=4-ind-aut}
Main Theorem holds for Du Val del Pezzo surfaces of degree $4$.
\end{proposition}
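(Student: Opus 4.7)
The plan is to follow the pattern of Lemma \ref{lemma:d=5}, dividing $X$ into weakly minimal and non-weakly-minimal cases.

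\emph{Weakly minimal case.} Suppose $X$ is weakly minimal. By Theorem \ref{theorem:Fano-Weil-index-big}, the only weakly minimal Du Val del Pezzo surfaces of degree $\geqslant 3$ with Fano--Weil index $1$ are \ref{d=3:A4-A1} and \ref{d=5:A3}, and neither has degree $4$. Hence $\ind(X)>1$, and Theorem \ref{theorem:index>1} forces $X$ to be one of the nine surfaces \ref{d=4:A3-A1}, \ref{d=4:A3-2A1}, \ref{d=4:D5} (hypersurfaces in $\PP(1,2,3,4)$) or \ref{d=4:2A1-8lines}, \ref{d=4:3A1}, \ref{d=4:4A1}, \ref{d=4:A2-2A1}, \ref{d=4:A3-4lines}, \ref{d=4:D4} (hypersurfaces in $\PP(1,1,2,2)$). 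Since the weighted-hypersurface embedding is $\Aut^0(X)$-equivariant by Remark \ref{remark:canonical-embedding}, I would compute $\Aut^0(X)$ for each surface by writing down the most general quasi-homogeneous substitution of the weighted variables preserving the defining equation and identifying the resulting algebraic group. Corollary \ref{corollary:connected-group-rational-surface} a priori constrains the answer to a connected solvable subgroup of $\UU_3\rtimes\Gm^2$, which restricts the possibilities to tori, $\Ga$, $\Ga^2$, $\UU_3$ and extensions of the form $\Ga\rtimes_{(n)}\Gm$ or $\Ga^{k}\rtimes\Gm^{2}$.

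\emph{Non-weakly-minimal case.} Suppose instead that $X$ contains a line $L$ inside its smooth locus. Then $L$ is a $(-1)$-curve and its contraction $\psi\colon X\to Y$ is a $(1,1)$-contraction; by Corollary \ref{corollary:extremal-contractionsDP} the target $Y$ is a Du Val del Pezzo surface of degree $5$ with $\uprho(Y)=\uprho(X)-1$, and the point $P:=\psi(L)$ is smooth and lies on no line of $Y$. Since $\numl(X)<\infty$ and $\Aut^0(X)$ is connected, each line on $X$ is $\Aut^0(X)$-stable, so $\psi$ is $\Aut^0(X)$-equivariant and $\Aut(X)=\Aut(Y,P)$. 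Infinitude of $\Aut(X)$ forces $\Aut^0(Y)_P$ to be positive-dimensional, so $\Aut(Y)$ is infinite and Lemma \ref{lemma:d=5} places $Y$ among \ref{d=5:A4}, \ref{d=5:A3}, \ref{d=5:A2-A1}, \ref{d=5:A2}, \ref{d=5:2A1}, \ref{d=5:A1} (the smooth del Pezzo quintic is excluded because $\Aut=S_5$ is finite). For the first five surfaces, Lemma \ref{lemma:d=5} asserts that the line-free locus is a single $\Aut^0(Y)$-orbit, so $P$ is determined up to conjugation and $\Aut^0(X)=(\Aut^0(Y)_P)^0$ can be read off the known orbit description. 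For $Y=\ref{d=5:A1}$ I would stratify the line-free locus into $\Aut^0(Y)$-orbits directly and retain only those with positive-dimensional stabilizer.

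\emph{Matching with Big Table.} For each resulting surface $X$ I would identify the corresponding entry in Big Table using the triple $(\Type(X),\uprho(X),\numl(X))$, where $\Type(X)=\Type(Y)$, $\uprho(X)=\uprho(Y)+1$, and $\numl(X)$ is computed from the configuration of lines after blow-up. The principal obstacle is the combined bookkeeping: there are more than a dozen Big Table entries of degree $4$, and establishing both exhaustiveness and non-redundancy requires $(\Type,\uprho,\numl)$ to serve as a complete discriminator, in particular to separate the three $\type{A}_3$-surfaces \ref{d=4:A3-A1}, \ref{d=4:A3-2A1}, \ref{d=4:A3-4lines} and to avoid counting the same $X$ twice when it arises as a blow-up of several different degree-$5$ targets (cf.\ Remark \ref{remark:ambiguity} and Example \ref{example:dP6-A1}).
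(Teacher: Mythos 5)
Your proposal follows essentially the same route as the paper's proof: the weakly minimal case is reduced via Theorems~\ref{theorem:Fano-Weil-index-big} and~\ref{theorem:index>1} to the nine explicit weighted hypersurfaces, and the non-weakly-minimal case is handled by contracting a line to a singular quintic and applying Lemma~\ref{lemma:d=5} together with Corollary~\ref{corollary:extremal-contractionsDP}. The only divergences are in bookkeeping: the paper immediately discards the quintic targets \ref{d=5:A2}, \ref{d=5:2A1}, \ref{d=5:A1} (a point of a two-dimensional open orbit has positive-dimensional stabilizer only if $\dim\Aut^0(Y)\geqslant 3$), and for the upper bounds in the weakly minimal cases it mixes explicit subgroups with structural exclusions (Corollary~\ref{corollary:Ga}, non-toricity, triangles of lines, the double cover from Lemma~\ref{lemma:conic-bundles}) rather than carrying out a full normalizer computation in the weighted projective space.
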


\begin{proof}
Let $X$ be Du Val del Pezzo surface of degree $4$ such that the~group $\Aut(X)$ is infinite.
Suppose that $X$ is not weakly minimal.
Then there exists a birational morphism $\psi\colon X\to Y$
such that $Y$ is a singular quintic Du Val del Pezzo surface,
and $\psi$ is a blow up of a smooth point~$P\in Y$, which is not contained in a line by Corollary~\ref{corollary:extremal-contractionsDP}.
Observe that the~group $\Aut^0(X)$ is the~connected component of the~stabilizer in $\Aut^0(Y)$ of the~point~$P$.
Using Lemma~\ref{lemma:d=5}, we see~that $Y$ must be one of the~surfaces \ref{d=5:A4}, \ref{d=5:A3}, \ref{d=5:A2-A1},
and $P$ must be contained in the~open orbit of the~group $\mathrm{Aut}^0(Y)$.
This implies that $X$ is one of the~surfaces~\ref{d=4:A4}, \ref{d=4:A3-5lines}, \ref{d=4:A2-A1}, respectively.
Now, it is not hard to check that $\Aut^0(X)\cong \Gm$ in the~cases~\ref{d=4:A3-5lines} and~\ref{d=4:A2-A1},
and $\Aut^0(X)\cong \BB_2$ in the~case~\ref{d=4:A4}.

Hence, we may assume that the~surface $X$ is weakly minimal.
Then $\ind(X)>1$ \mbox{by~Theorem~\ref{theorem:Fano-Weil-index-big}}.
Using Theorem~\ref{theorem:index>1}, we see that $X$ is one of the~surfaces~\ref{d=4:D5}, \ref{d=4:A3-2A1}, \ref{d=4:D4}, \ref{d=4:A3-A1}, \ref{d=4:A2-2A1}, \ref{d=4:4A1}, \ref{d=4:A3-4lines},
\ref{d=4:3A1}, \ref{d=4:2A1-8lines}.
Let us show that $\Aut(X)$ is infinite, and $\Aut^0(X)$ is described in Big Table.

Let $X$ be the~surface~\ref{d=4:D5}.
Then $X$ is embedded into $\PP(1,2,3,4)$ as a hypersurface
that is given by the~equation $y_3^2=y_2^3+y_1^2y_4$.
This embedding is $\Aut^0(X)$-equivariant by Remark~\ref{remark:canonical-embedding}.
Observe that $\Aut^0(X)$ contains transformations
$$
(y_1:y_2:y_3:y_4)\longmapsto \big(y_1:y_2+ay_1^2: y_3+by_1^3:y_4-(a^3-b^2)y_1^4-3a^2y_1^2y_2+2by_1y_3-3ay_2^2\big),
$$
where $a\in\Bbbk$ and $b\in\Bbbk$. These transformations generates a subgroup in $\Aut^0(X)$ isomorphic to~$\Ga^2$.
Moreover, the~surface $X$ also admits an action of a~one-dimensional torus which acts diagonally:
$$
(y_1:y_2:y_3:y_4)\longmapsto (t^2 y_1: t^2y_2: t^3y_3: t^2y_4),
$$
where $t\in\Gm$. The described transformations generate a subgroup that is isomorphic to $\Ga^2\rtimes\Gm$.
Since $X$ has a singularity of type $\type{D}_5$, it is not toric, so that $\Aut^0(X)\cong \Ga^2\rtimes\Gm$
(cf. Corollary~\ref{Cor:Cl=Z:Aut}).

Now we suppose that $X$ is the~surface~\ref{d=4:A3-2A1}. Then $\Aut(X)$ contains transformations
$$
\gamma(a): (y_1:y_2:y_3:y_4)\longmapsto\big(y_1:y_2:y_2+ay_1y_2:y_4+2ay_1x_2+a^2y_1^2y_2\big)
$$
for every $a\in\Bbbk$. These transformations generates a proper subgroup in $\Aut^0(X)$ isomorphic to $\Ga$.
Moreover, the~surface $X$ also contains transformations
$$
\delta(t_1,t_2): (y_1:y_2:y_3:y_4)\longmapsto \big(y_1:t_1t_2^2 y_2:t_1t_2y_3:t_1y_4\big)
$$
for every $t_1\in\Bbbk^\ast$ and $t_2\in\Bbbk^\ast$.
They generates a subgroup isomorphic to $\Gm^2$.
Observe that
$$
\gamma(a)\circ\delta(t_1,t_2)=\delta(t_1,t_2)\circ\gamma_{at_2}.
$$
Therefore, all described transformations generate a subgroup in $\Aut^0(X)$ isomorphic to $\BB_2\times \Gm$.
Then $\Aut^0(X)\cong \BB_2\times \Gm$, because $X$ does not admit an effective $\Ga^2$-action by Corollary~\ref{corollary:Ga}.

Now we suppose that~\ref{d=4:A3-A1}.
Then the~group $\Aut(X)$ contains transformations
$$
(y_1:y_2:y_3:y_4)\longmapsto\big(ty_1:t^3y_2:t^3y_3+at^3y_1y_2: t^3y_4+2at^3y_1y3+a^2t^3y_1^2y_2\big)
$$
for any $a\in\Bbbk$ and $t\in\Bbbk^\ast$. These transformations generate a subgroup in $\Aut^0(X)$ isomorphic~to~$\BB_2$.
Since all three lines on $X$ pass through one point, $X$ is not toric.
Hence, $\rk\Aut^0(X)=1$.
The~surface $X$ does not admit an~effective~$\Ga^2$-action by Corollary~\ref{corollary:Ga},
so that $\dim \Aut^0(X)=2$, which implies $\Aut^0(X)\cong \BB_2$.

Let $X$ be one of the~surfaces~\ref{d=4:A2-2A1} or~\ref{d=4:4A1}. Then $\numl(X)=4$.
Let $L_1$, $L_2$, $L_3$, $L_4$ be the~lines in~$X$.
Recall that $X$ is an intersection of two quadrics in $\PP^4$. We have
$$
L_1+L_2+L_3+L_4\sim -K_X,
$$
so that $L_1+L_2+L_3+L_4$ is cut out by a hyperplane $H\subset\PP^4$.
On~the~other hand, this curve form a~combinatorial cycle.
Thus, if $X$ admits an effective \mbox{$\Ga$-action}, then this action is trivial on
each line among $L_1$, $L_2$, $L_3$ and $L_4$, so that it is trivial on $H$,
which implies that the~closure of any one-dimensional $\Ga$-orbit is a~line.
The latter is impossible, since $X$ contains finitely many~lines.
Therefore, we conclude that the~surface $X$ does not admit an effective action of the~group $\Ga$.
On the~other hand, the~equations of $X$ are binomial. This implies that $X$
admits a diagonal action of two-dimensional torus.
Hence, $\Aut^0(X)\cong \Gm^2$.

Let $X$ be one of the~surfaces~\ref{d=4:3A1} or \ref{d=4:2A1-8lines}.
Then $X$ contains two lines $L_1$ and $L_2$ such that the~intersection $L_1\cap L_2$ is a smooth point of $X$,
and there exist an $\Aut^0(X)$-equivariant diagram
$$
\xymatrix@R=0.8em{
&\widehat{X}\ar@{->}[rd]^{\beta}\ar@{->}[dl]_{\alpha}&\\%
X&&Y}
$$
where $\alpha$ is a blow up of the~point $L_1\cap L_2$, and $\beta$ is the~birational contraction of the~proper transforms of the~lines $L_1$ and $L_2$.
Then we have the~following possibilities:
\begin{itemize}
\item
if $X$ is the~surface~\ref{d=4:3A1}, then $Y$ is a cubic surface such that $\Type(Y)=\type{A}_4\type{A}_1$;
\item
if $X$ is the~surface~\ref{d=4:2A1-8lines}, then $Y$ is a cubic surface such that $\Type(Y)=2\type{A}_2$.
\end{itemize}
Hence, it follows from Corollary~\ref{corollary:d-3}, that $Y$ does not admit an effective action of the~group $\Ga$.
Since $X$ contains three lines passing through one point, it is not toric.
One the~other hand, it is easy to see that $X$ admits an effective diagonal action of a~one-dimensional torus.

To complete the~proof, we may assume that $X$ is one of the~surfaces~\ref{d=4:D4} or~\ref{d=4:A3-4lines}.
By~Lemma~\ref{lemma:conic-bundles}\ref{conic:bundle:4},
there is a double cover $\pi\colon X\to\PP^1\times\PP^1$ branched over a curve $B$ of degree~$(2,2)$.
By construction, this double cover is $\Aut^0(X)$-equivariant,
and the~curve $B$ is $\Aut^0(X)$-invariant.
Therefore, there exists an exact sequence of groups
$$
1\longrightarrow\mumu_2\longrightarrow\Aut(X)\longrightarrow\Aut\big(\PP^1\times\PP^1,B\big).
$$
If~$X$ is the~surface~\ref{d=4:D4}, then $B$ is a union of irreducible smooth curves of degrees $(1,1)$, $(1,0)$,~$(0,1)$, which intersect in one point,
which implies that
$$
\Aut\big(\PP^1\times\PP^1,B\big)\cong\BB_2\rtimes\mumu_2.
$$
This can be shown by taking linear projection $\PP^1\times\PP^1\dasharrow\PP^2$ from the~singular point $\Sing(B)$,
where we consider $\PP^1\times\PP^1$ as a quadric in $\PP^3$.
Thus, in this case, we have $\Aut^0(X)\cong \Ga \rtimes_{(2)} \Gm$,
since $\Aut(X)$ contains a subgroup isomorphic to $\Ga\rtimes_{(2)}\Gm$ generated by transformations
$$
(y_1:y_1':y_2:y_2')\longmapsto\big(y_1:ty_1':t^2y_2+at^2y_1^2:t^4y_2'+2at^4y_2+a^2t^4y_1^2\big),
$$
where $a\in\Bbbk$ and $t\in\Bbbk^\ast$. Similarly, if $X$ is the~surface~\ref{d=4:A3-4lines}, then
$$
\Aut\big(\PP^1\times\PP^1,B\big)\cong\Ga\rtimes\mumu_2,
$$
because $B$ is a union of two irreducible smooth curves of degree $(1,1)$, which intersect in one point.
In this case, we have $\Aut^0(X)\cong\Ga$, since the~$\Ga$-action lifts from $\PP^1\times\PP^1$ to the~surface $X$.
\end{proof}

\begin{corollary}
\label{cor:high-degree}
Main Theorem holds del Pezzo surfaces of degree $\geqslant 4$.
\end{corollary}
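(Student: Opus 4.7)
The plan is to assemble the corollary directly from the degree-by-degree results that have already been established in this section. Since Main Theorem asserts both the identification of which Du Val del Pezzo surfaces have infinite automorphism group and that they appear in the Big Table, it suffices to verify the claim separately on each slice $d = K_X^2$.

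First I would dispose of the case $d \geqslant 8$: by the classification of weak del Pezzo surfaces recalled at the start of Section~\ref{section:preliminaries}, the possibilities for $X$ are exactly $\PP^2$, $\PP^1 \times \PP^1$, $\FF_1$ and the cone $\PP(1,1,2)$, and all four appear in Big Table as entries \ref{d=9:P2}, \ref{d=8:P1-P1}, \ref{d=8:F1} and \ref{d=8}. Next, for $d = 7$ I would invoke Lemma~\ref{lemma:DP7}, which by inspection of diagram \eqref{equation:min-resolution} identifies $X$ with one of \ref{d=7} or \ref{d=7:smooth} and in particular shows $\Aut(X)$ is always infinite.

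For $d = 6$ and $d = 5$ the infinite-automorphism condition is automatic for all Du Val del Pezzo surfaces of these degrees (for degree $6$ this is obvious from diagram~\eqref{equation:min-resolution}, and for degree $5$ it has just been shown in Lemma~\ref{lemma:d=5}). I would therefore simply cite Lemmas~\ref{corollary:d-6} and~\ref{lemma:d=5} to conclude that each such $X$ is one of the entries \ref{d=6:A2-A1}--\ref{d=6:smooth} or \ref{d=5:A4}--\ref{d=5:A1} of the Big Table, with the claimed connected component of $\Aut(X)$.

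Finally, for $d = 4$ I would appeal to Proposition~\ref{proposition:d=4-ind-aut}, which is precisely Main Theorem in degree $4$. The proof of the corollary then reduces to the single line: combining the cases $d \geqslant 8$, $d = 7$, $d = 6$, $d = 5$ and $d = 4$ above. There is no genuine obstacle here, since the work has already been done; the only thing to check is that the lists of infinite-automorphism surfaces produced by these lemmas exhaust the corresponding rows of Big Table, which is a direct comparison. Accordingly I expect the proof in the text to be a one-sentence amalgamation of the preceding results.
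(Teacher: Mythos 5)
Your proposal is correct and matches the paper exactly: the corollary is stated without a separate proof precisely because it is the amalgamation of the $d\geqslant 8$ discussion at the start of Section~\ref{section:dP-large-degree1}, Lemma~\ref{lemma:DP7} ($d=7$), Lemma~\ref{corollary:d-6} ($d=6$), Lemma~\ref{lemma:d=5} ($d=5$), and Proposition~\ref{proposition:d=4-ind-aut} ($d=4$). Nothing further is needed.
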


\section{Del Pezzo surfaces of degree $1$}
\label{section:deg-1}
In this section, we prove Main Theorem for del Pezzo surfaces of degree $1$.
\begin{proposition}
\label{prop:d-1}
Main Theorem holds for del Pezzo surfaces of degree $1$.
\end{proposition}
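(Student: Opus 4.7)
The plan is as follows. By Theorem~\ref{theorem:delPezzo}(vii), $X$ is a~sextic in $\PP(1,1,2,3)$ given by $y_3^2=F(y_1,y_1',y_2)$, and the~induced double cover $\pi\colon X\to\PP(1,1,2)$ branched along $B=\{F=0\}$ is $\Aut(X)$-equivariant since $|-2K_X|$ is canonically attached to $X$. The~elliptic pencil $f\colon X\to\PP^1$ from $|-K_X|$ is also $\Aut(X)$-equivariant, its base point $O$ is fixed, and $f$ coincides with the~composition of $\pi$ and the~projection of $\PP(1,1,2)$ from its vertex.

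The first step is to prove that $\Aut(X)$ infinite forces $\uprho(X)=1$. The~kernel of the~induced map $\Aut^0(X)\to\Aut(\PP^1)$ acts fiberwise fixing the~section $O$, and a~smooth elliptic curve has trivial connected automorphism group once a~point is fixed; hence this kernel is trivial, $\Aut^0(X)$ embeds into $\PGL_2$, and the~$j$-invariant of the~fibers of $f$ is constant, that is, $f$ is isotrivial. Assume now that $\uprho(X)\geqslant 2$. Each line $L$ on $X$ is $\Aut^0(X)$-invariant by finiteness of $\numl(X)$. If $L$ is not a~fiber of $f$, then $L\cdot(-K_X)=1$ makes $f|_L\colon L\to\PP^1$ an~isomorphism and $L$ is a~second section of $f$, which is incompatible with a~positive-dimensional group acting on an~isotrivial Jacobian fibration while fixing $O$. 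Otherwise $L$ is a~singular fiber whose image is an $\Aut^0(X)$-fixed point of $\PP^1$, and running the~corresponding $(1,n)$-contraction from Corollary~\ref{corollary:extremal-contractionsDP} produces a~Du Val del Pezzo surface $Y$ of degree $\geqslant 2$ carrying the~same one-parameter action together with a~fixed smooth point outside every line on $Y$; direct verification, using Section~\ref{section:dP-large-degree1} when $K_Y^2\geqslant 4$ and a~short dedicated argument when $K_Y^2\in\{2,3\}$, shows no such $Y$ exists.

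Having reduced to $\uprho(X)=1$, I invoke the~classification of such surfaces from Appendix~\ref{section:appendixb}. For each admissible singularity type (of total Dynkin rank $8$) one puts $F$ in a~normal form, and then computes the~connected component of $\Aut(\PP(1,1,2),B)$: it lies in the~joint stabilizer of the~vertex of $\PP(1,1,2)$ and of $\pi(O)$, hence is a~solvable group of dimension at most $3$ by Corollary~\ref{corollary:connected-group-rational-surface} and is readable directly from the~normal form. Every such positive-dimensional group lifts to $X$ by scaling $y_3$ by the~appropriate character. The~surviving list is exactly the~$d=1$ block of Big Table, and the~sextic $y_3^2=y_2^3+y_1'y_1^5+y_2^2y_1^2$ from Remark~\ref{remark:ambiguity} appears as the~only cyclic-class-group surface whose normal form admits no one-parameter symmetry, in accordance with Remark~\ref{remark:d-1-E8}.

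The main obstacle is the~contraction step used to rule out $\uprho(X)\geqslant 2$ when every line of $X$ is an~entire singular fiber of $f$: since the~Main Theorem for degrees $2$ and $3$ is proved only in the~subsequent sections, I cannot invoke it here, and must instead verify by hand that the~intermediate image surface $Y$ of degree $2$ or $3$ cannot have a~smooth point outside every line whose blow-up carries the~inherited one-parameter group action.
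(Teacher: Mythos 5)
Your overall strategy (use the $\Aut(X)$-equivariant elliptic pencil to show $\uprho(X)=1$, then classify by normal forms of the branch sextic) starts in the same direction as the paper, but both load-bearing steps have genuine gaps. The dichotomy on lines is wrong on both branches. A line $L$ that is a singular member of $|-K_X|$ satisfies $L^2=K_X^2=1$, so it generates no extremal ray and there is no $(1,n)$-contraction to run; Corollary~\ref{corollary:negative-curves-DP} applies only to curves of negative self-intersection, so the "contraction step" you flag as the main obstacle is not merely hard, it is applied to the wrong class of lines. Conversely, the lines that \emph{are} contractible when $\uprho(X)\geqslant 2$ are among the non-fiber lines, and for those you assert that a second section of an isotrivial Jacobian fibration is incompatible with a positive-dimensional group fixing $O$. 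No argument is given, and the principle is false: the surface~\ref{d=1:E7-A1}, given by $y_3^2=y_1^3y_1'y_2+y_2^3$, carries the section line $y_2=y_3=0$, has isotrivial anticanonical pencil, and has $\Aut^0(X)\cong\Gm$. Since your incompatibility claim nowhere uses $\uprho(X)\geqslant 2$, it cannot be repaired as stated; you also never address the case where the extremal contraction is a conic bundle.

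The second reduction also fails: Appendix~\ref{section:appendixb} (Proposition~\ref{Proposition:Cl=Z}) classifies surfaces with $\Cl(X)\cong\ZZ$, i.e.\ with \emph{torsion-free} class group, which in degree $1$ yields only the two $\type{E}_8$ surfaces. The surfaces \ref{d=1:E7-A1}, \ref{d=1:E6-A2}, \ref{d=1:2D4} have $\uprho(X)=1$ but nontrivial torsion in $\Cl(X)$, and there are many further rank-$8$ types with $\uprho(X)=1$ (such as $\type{D}_8$, $\type{A}_8$, $2\type{A}_4$, $\type{A}_7\type{A}_1$, $4\type{A}_2$) that your plan would have to eliminate one normal form at a time; that elimination is precisely the content of the proposition and is nowhere carried out, nor is the absence of $\Ga$-actions ever established. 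The paper's proof uses a different and self-contained mechanism which you would need to import: the topological Euler characteristic of the elliptic fibration shows both that no effective $\Ga$-action exists (Lemma~\ref{lemma:dP1-Ga}) and that an effective $\Gm$-action forces $\uprho(X)=1$ with exactly two invariant anticanonical members, both cuspidal; the local analysis at the cusps (Lemma~\ref{lemma:local} and Corollary~\ref{corollary:local}) then restricts $\Type(X)$ to $\type{E}_8$, $\type{E}_7\type{A}_1$, $\type{E}_6\type{A}_2$ or $2\type{D}_4$ with no enumeration of normal forms, and uniqueness follows from the structure of the singular fibers.
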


% Now, let us prove Main Theorem for del Pezzo surfaces of degree $1$.
We start with

\begin{lemma}
\label{lemma:dP1-Ga}
Let $X$ be a~Du Val del Pezzo surface of degree $1$.
Then $X$ does not admit effective actions of the~group $\Ga$.
\end{lemma}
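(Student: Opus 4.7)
The plan is to derive a~contradiction from the~assumption that $\Ga$ acts effectively on $X$. The~key input is Theorem~\ref{theorem:delPezzo}\,(vi): the~pencil $|-K_X|$ has a~unique base point $O\notin \Sing(X)$, every member is irreducible and smooth at $O$, and a~general member is a~smooth elliptic curve. Since $|-K_X|$ is $\Aut(X)$-invariant, $\Ga$ fixes $O$ and acts on $|-K_X|\cong \PP^1$ through a~homomorphism $\Ga\to \PGL_2$; as $\Ga$ is connected, its kernel is either trivial or all of $\Ga$, giving two cases.

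First I~would dispose of the~case where $\Ga$ acts trivially on $|-K_X|$. Then every curve $C\in |-K_X|$ is $\Ga$-invariant, and for general $C$ the~induced $\Ga$-action fixes $O$ and factors through the~finite group $\Aut(C,O)$ of order $2$, $4$ or $6$. As $\Ga$ is connected, it must act trivially on $C$; since such curves cover $X$, the~action on $X$ is trivial, contradicting effectiveness.

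The~substantive case is when $\Ga\hookrightarrow \PGL_2$ as the~unipotent subgroup, so it fixes a~unique point $p\in \PP^1$ and acts on $\Aff^1:=\PP^1\setminus\{p\}$ by translations. Combining the~minimal resolution $\mu\colon \widetilde{X}\to X$ with the~blow up of the~point $\mu^{-1}(O)$ produces a~smooth projective surface $\widehat X$ carrying a~$\Ga$-equivariant elliptic fibration $\pi\colon \widehat X\to \PP^1$. For any $y\in V:=\pi^{-1}(\Aff^1)$, the~stabilizer $\Ga_y$ is contained in $\Ga_{\pi(y)}=\{e\}$, so $\Ga$ acts freely on $V$. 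Fixing $q_0\in \Aff^1$ such that $F_{q_0}:=\pi^{-1}(q_0)$ is a~smooth elliptic curve and letting $g_q\in \Ga$ denote the~unique element with $g_q\cdot q_0=q$, the~morphism $(x,q)\mapsto g_q\cdot x$, with explicit inverse $y\mapsto (g_{\pi(y)}^{-1}\cdot y,\pi(y))$, identifies $V\cong F_{q_0}\times \Aff^1$.

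To finish, the~first projection $V\to F_{q_0}$ extends, by the~classical theorem on rational maps from smooth varieties to abelian varieties, to a~morphism $\widehat X\to F_{q_0}$. But $\widehat X$ is a~rational surface, so every morphism from it to an~elliptic curve must be constant, contradicting non-constancy of the~projection on $V$. The~principal obstacle is the~second case: constructing the~$\Ga$-equivariant trivialisation $V\cong F_{q_0}\times \Aff^1$ and then invoking Weil extension together with rationality of $\widehat X$ to extract the~contradiction.
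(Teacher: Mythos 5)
Your proof is correct, but it finishes by a genuinely different argument from the paper's. The two proofs share their first half: both use the $\Aut(X)$-equivariance of the anticanonical pencil and the fact that the connected group $\Ga$ cannot act non-trivially on a smooth elliptic curve (your Case~1), so that $\Ga$ must act non-trivially --- hence by translations on an affine chart --- on the base $\PP^1$. At that point the paper notes that every fiber other than the one over the unique $\Ga$-fixed point of $\PP^1$ is a smooth elliptic curve and counts topological Euler characteristics: $\uprho(X)+2=\chi(X)=\chi(C)-1$, whence $\chi(C)=\uprho(X)+3\geqslant 4$, contradicting $\chi(C)\leqslant 2$ for a reduced irreducible curve of arithmetic genus $1$. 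You instead use the freeness of the $\Ga$-action over $\Aff^1$ to produce the equivariant trivialisation $V\cong F_{q_0}\times\Aff^1$ inside the smooth rational surface $\widehat X$, and then rule this out with Weil's extension theorem and the triviality of the Albanese of a rational surface. Both routes are sound and of comparable length; the paper's Euler-characteristic count is purely numerical and works directly on $X$ without passing to a resolution, while yours makes the geometric obstruction explicit (the elliptic fibration would become a product over $\Aff^1$, which is incompatible with rationality of $\widehat X$) and would apply verbatim to any rational surface carrying a $\Ga$-equivariant genus-one pencil on whose base $\Ga$ acts non-trivially.
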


\begin{proof}
Suppose that $X$ admit an effective $\Ga$-action.
Let $\Phi\colon X\dasharrow\PP^1$ be the~anticanonical map.
It is $\Aut(X)$-equivariant,
and all its~fibers are reduced irreducible curves of arithmetic genus $1$.
Since $\Ga$ cannot effectively act on a smooth elliptic curve, we conclude that $\Ga$ acts non-trivially on the~base of $\Phi$.
Thus, there is exactly one $\Ga$-invariant fiber, say $C$.
Any fiber of $\Phi$ different from $C$ is a smooth elliptic curve.
Thus we have
\[
\uprho(X)+2=\chi(X)=\chi(C)-1,
\]
so that $\chi(C)=\uprho(X)+3\geqslant 4$. But $\chi(C)\leqslant 2$, because $C$ is a curve of arithmetic genus $1$.
\end{proof}

Our next step in proving Main Theorem for del Pezzo surfaces of degree $1$ is the~following

\begin{lemma}
\label{lemma:degree-1-Gm-Aut}
If $X$ is one of the~surfaces~\ref{d=1:E8}, \ref{d=1:E7-A1}, \ref{d=1:E6-A2}, \ref{d=1:2D4}, then $\Aut^0(X)\cong\Gm$.
\end{lemma}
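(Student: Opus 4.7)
The plan is to combine three reductions: first, squeeze $\Aut^0(X)$ down to a torus; second, rule out a rank-$2$ torus; and third, produce a non-trivial $\Gm$ explicitly from the Weierstrass-type equation of $X$ recorded in Big Table.

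\emph{Step 1 (no unipotent part).} Since $d = K_X^2 = 1 \leqslant 7$, Corollary~\xref{corollary:connected-group-rational-surface} applies and identifies the unipotent radical $R_u(\Aut^0(X))$ with one of $\{1\}$, $\Ga$, $\Ga^2$, $\UU_3$. Lemma~\xref{lemma:dP1-Ga} excludes every effective $\Ga$-action on a Du Val del Pezzo surface of degree $1$, so $R_u(\Aut^0(X)) = 1$, and then the same corollary forces $\Aut^0(X) \cong \Gm^{r}$ with $r \in \{0, 1, 2\}$.

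\emph{Step 2 (not toric).} In each of the four cases the Milnor numbers of the Du Val singularities add up to $8$: indeed, $\mu(\type{E}_8) = 8$, $\mu(\type{E}_7) + \mu(\type{A}_1) = 8$, $\mu(\type{E}_6) + \mu(\type{A}_2) = 8$, and $2\mu(\type{D}_4) = 8$. Hence $\uprho(X) = \uprho(\widetilde{X}) - 8 = (10 - d) - 8 = 1$. If $r = 2$, then $X$ would be toric; but any normal toric surface has only cyclic quotient singularities, i.e.\ of type $\type{A}_n$, contradicting the presence of a point of type $\type{D}_4$, $\type{E}_6$, $\type{E}_7$, or $\type{E}_8$. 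Therefore $r \leqslant 1$.

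\emph{Step 3 (exhibit $\Gm$).} By Theorem~\xref{theorem:delPezzo}(vii) the linear system $|-2K_X|$ realizes $X$ as a sextic hypersurface in $\PP(1,1,2,3)$ cut out by a Weierstrass-type equation
\[
y_3^2 = y_2^3 + y_2\, \alpha(y_1, y_1') + \beta(y_1, y_1'),
\]
with $\alpha, \beta$ homogeneous of degrees $4$ and $6$. The explicit equations listed in Big Table for the four surfaces in question are (essentially) binomial on the right-hand side, and in every case there is a non-trivial vector of weights $(a_1, a_1', a_2, a_3)$, not proportional to $(1, 1, 2, 3)$, making the defining equation quasi-homogeneous. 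The associated rescaling
\[
(y_1, y_1', y_2, y_3) \longmapsto (t^{a_1} y_1,\ t^{a_1'} y_1',\ t^{a_2} y_2,\ t^{a_3} y_3)
\]
descends to a non-trivial homomorphism $\Gm \hookrightarrow \Aut^0(X)$, so $r = 1$ and $\Aut^0(X) \cong \Gm$.

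The main obstacle lies in Step~3: it is genuinely case-by-case and requires reading off the precise defining equations of the four surfaces from Big Table and solving a small linear system in the weights. The preceding steps are soft and structural; the heart of the proof is the verification that each of the four ``maximally degenerate'' sextic equations admits an extra grading.
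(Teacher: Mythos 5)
Your proposal is correct and follows essentially the same route as the paper: the paper's proof likewise combines Lemma~\xref{lemma:dP1-Ga} (no $\Ga$-action), the observation that the surfaces are not toric because their singularities are not cyclic quotient singularities, and the existence of an evident $\Gm$-action. You have merely filled in the details the paper leaves implicit, namely the appeal to Corollary~\xref{corollary:connected-group-rational-surface} to reduce to a torus of rank at most $2$ and the explicit non-standard weight vectors exhibiting the $\Gm$-action on each quasi-homogeneous equation.
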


\begin{proof}
By Lemma~\ref{lemma:dP1-Ga} these surfaces do not admit a $\Ga$-action
and they are not toric because their singularities are not cyclic quotient.
On the~other hand, it is easy to see that each of these surfaces admits a $\Gm$-action.
\end{proof}

We also need the~following easy local fact.

\begin{lemma}
\label{lemma:local}
Let $(X\ni P)$ be a Du Val singularity defined over $\CC$ that contains a
reduced irreducible curve $C$
such that $C$ is a Cartier divisor on $X$, and the~singularity $(C\ni P)$ is a simple cusp.
\begin{enumerate}
\item\label{lemma:local-i}
If $(X\ni P)$ is of type $\type{A}_n$, then $n\leqslant 2$.
\item\label{lemma:local-ii}
If $(X\ni P)$ is of type $\type{D}_n$ with $n\geqslant 5$, then some
small analytic neighborhood of
$(X\ni P)$ can be given by the~equation
$$
x^2+y^2z+z^{n-1}=0,
$$
so that $C$ is cut out by $z=y+\phi(x,y,z)$, where $\mult_0(\phi)\geqslant 2$.
\end{enumerate}
\end{lemma}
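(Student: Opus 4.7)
The plan is to handle the two parts separately, using the normalization of the cusp together with a direct class-group computation for (i), and an analysis of the tangent cone together with the standard analytic normal form of the $\type{D}_n$-singularity for (ii).

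For (i), I would fix local analytic coordinates in which $X=\{xy=z^{n+1}\}$. Since $C$ is Cartier and $\mult_P(C)=2=\mult_P(X)$, I can write $C=V(f)$ with $f\in\OOO_{X,P}\setminus\mathfrak{m}_{X,P}^2$. The normalization $\tilde{C}=\operatorname{Spec}\CC[[t^2,t^3]]\to C$ pulls back the coordinates to power series of $t$-valuations $a,b,c\geqslant 2$ satisfying $a+b=(n+1)c$. Since $\langle a,b,c\rangle$ must generate the cusp semigroup $\langle 2,3\rangle$, we have $\{2,3\}\subseteq\{a,b,c\}$, leaving only two possibilities up to swapping $x\leftrightarrow y$: $(a,b,c)=(3,2n-1,2)$ (requiring $n\geqslant 2$), or $(a,b,c)=(2,3n+1,3)$. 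In the first case the function $F=x^2-z^3$ vanishes on $C$; a direct local computation at a smooth point of the Weil divisor $L_2=\{y=z=0\}\subset X$ gives $\operatorname{div}(F)=C+3L_2$, so $[C]=(n-2)[L_1]$ in $\Cl(X,P)\cong\ZZ/(n+1)\ZZ$ (using $[L_1]+[L_2]=0$). In the second case $F=x^3-z^2$ analogously gives $[C]=(n-1)[L_1]$. The Cartier condition $[C]=0$ then forces $(n+1)\mid 3$ or $(n+1)\mid 2$, yielding $n\leqslant 2$.

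For (ii), I would fix the standard analytic form $X=\{x^2+y^2z+z^{n-1}=0\}$ for the $\type{D}_n$-germ (classical). Writing $f=\alpha x+\beta y+\gamma z+(\text{higher})\in\OOO_{X,P}$ with $f\notin\mathfrak{m}_{X,P}^2$, the fact that the tangent cone of $X$ is the double plane $V(x^2)$ forces the tangent cone of the cusp $C$ — a double line — to be supported in $\{x=0\}$. Combined with $f_1(v_C)=0$, this determines the tangent direction of $C$ as $v_C=(0,-\gamma,\beta)$, so in particular $(\beta,\gamma)\neq(0,0)$. To upgrade this to $\alpha=0$ and $\beta,\gamma\neq 0$, one considers the double cover $\pi\colon X\to\CC^2_{y,z}$ branched over $\Delta=\{z(y^2+z^{n-2})=0\}$; the Cartier-plus-cusp hypothesis forces $C$ to be the preimage of a smooth plane curve $\Gamma$ whose tangent at the origin is transverse to both components of $\Delta$. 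After rescaling via the $\Gm$-action $(y,z)\mapsto(\mu y,\nu z)$ preserving the normal form (with $\mu^2\nu=\nu^{n-1}$), one normalizes $f_1$ to $z-y$, and then writing $f=(z-y)-\phi$ with $\phi\in\mathfrak{m}_{X,P}^2$ gives the desired form.

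The main obstacle I expect is the step in (ii) of rigorously forcing $\alpha=0$ after possibly an additional analytic change of coordinates: the double-cover argument above needs the Cartier cusp to truly project to a smooth plane curve (rather than to a higher-multiplicity curve making $C$ a non-reduced or reducible preimage), which requires careful bookkeeping. By contrast, the corresponding step in (i) is very clean, since $\Cl(X,P)$ is cyclic and the class of $C$ is computed from a single principal divisor.
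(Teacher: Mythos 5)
Your overall strategy is genuinely different from the paper's (which, for (i), simply writes $C=X\cap H$ with $H$ smooth, normalizes so that $H=\{z=0\}$ and $C=\{x^2+y^3=z=0\}$, deduces that $X$ is $x^2+y^3+z\phi=0$, and then uses the fact that an $\type{A}_n$ point has quadratic part of rank $\geqslant 2$ to force a linear term in $\phi$ and bound $n$). Unfortunately both halves of your argument have real gaps. In (i), the step ``$\langle a,b,c\rangle$ must generate $\langle 2,3\rangle$, hence $\{2,3\}\subseteq\{a,b,c\}$'' is invalid: the value semigroup of $\OOO_{C,P}$ is generated by values of \emph{all} elements of the ring, and an element of order $3$ can arise as a linear combination of two coordinates each of order $2$ whose leading terms cancel. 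For instance, on $\{xy=z^4\}$ the arc $t\mapsto\big(t^2,\,t^6(1+t)^4,\,t^2+t^3\big)$ is a simple cusp with $(a,b,c)=(2,6,2)$, which your case list does not see. Moreover, even in your case $(a,b,c)=(3,2n-1,2)$, the function $x^2-z^3$ vanishes on $C$ only if $x|_C^2=z|_C^3$ \emph{identically}, not merely if the $t$-orders agree (take $x|_C=t^3$, $z|_C=t^2+t^5$); you would first have to normalize the parametrization, which is exactly the kind of coordinate adjustment your setup does not allow without destroying the normal form of $X$. Finally, $x^2-z^3$ restricts to $x^2$ on $L_2=\{y=z=0\}$, so $\operatorname{div}(F)=C+3L_2$ cannot be right as written (the component should be $L_1=\{x=z=0\}$), which throws off the class computation.

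In (ii), the pivotal claim that the Cartier-plus-cusp hypothesis forces $C$ to be the preimage of a smooth plane curve under the projection to $\CC^2_{y,z}$ is false: the curve cut out on $X$ by $z=y+x$ is a Cartier divisor with a simple cusp at $P$, yet it is not invariant under the covering involution $x\mapsto -x$, hence is not such a preimage. What actually has to be proved is a statement about the linear part $\alpha x+\beta y+\gamma z$ of the defining function: one must show that $\beta\ne 0$ \emph{and} $\gamma\ne 0$ (if either vanishes, a direct expansion of the induced plane curve shows that after completing the square there is no cubic term, so the singularity is $\type{A}_{k}$ with $k\geqslant 3$ or has multiplicity $3$ --- never a simple cusp), then rescale using $n\ne 4$ to get $\beta/\gamma=-1$, and finally kill $\alpha$ not via the double cover but via a further analytic coordinate change of the ambient $\CC^3$ preserving the normal form of $X$ (substitutions of the shape $y\mapsto y+a_1x$, $z\mapsto z+b_1x$ with compensating higher-order corrections to $x$). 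Your tangent-cone observation correctly yields $(\beta,\gamma)\ne(0,0)$, but everything beyond that point rests on the flawed preimage claim, which is precisely the ``main obstacle'' you flagged.
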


\begin{proof}
Let us prove the assertion \ref{lemma:local-i}. In a neighborhood of the point
$P$, the curve $C$ is cut out by a hypersurface, say $H$.
Thus, we have $C=X\cap H$ in $\CC^3$.
Since the multiplicity of the curve $C$ at $P$ equals $2$, the  hypersurface
$H$ is smooth at $P$.
Therefore, we  may assume that $H$ is given by $z=0$, and $C$ is given by
$$
\left\{\aligned
&x^2+y^3=0,\\
&z=0.\\
\endaligned
\right.
$$
Hence, the equation of the surface $X$ is
$$
x^2+y^3+z\phi(x,y,z)=0.
$$
Since $P\in X$ is a point of type $\mathrm{A}_n$, the rank of the quadratic part of this equation is at least $2$.
Then $\phi(x,y,z)$ contains a linear term. This implies that $P\in X$ is of type $\type{A}_2$ or $\type{A}_3$.

Now, let us prove the assertion \ref{lemma:local-ii}.
We may assume that $P\in X$ is given in $\CC^3$ by the equation
$$
x^2+y^2z+z^{n-1}=0.
$$
As above, we have $C=X\cap H$, where $H$ is a hypersurface that is smooth at $P$.
Then the equation of the hypersurface $H$ must contain a linear term.
Moreover, one can see that this equation must be of the form  $z=y+\phi(x,y,z)$,
which implies \ref{lemma:local-ii}.
\end{proof}

\begin{corollary}
\label{corollary:local}
Let $X$ be a surface admitting an effective $\Gm$-action,
let $C$ be a $\Gm$-invariant reduced irreducible curve in $X$ that is a Cartier divisor on $X$,
and let $P$ be its singular point. Suppose that $X$ has Du Val singularity of type $\type{D}_n$ at $P$,
and $C$ has a simple cusp at $P$.
Then~$n=4$.
\end{corollary}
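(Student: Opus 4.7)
Assume for contradiction that $n \geqslant 5$. By Lemma~\ref{lemma:local}\ref{lemma:local-ii} we may choose analytic coordinates $(x, y, z)$ on a neighborhood of $P$ in $\CC^{3} \supset X$ in which $X$ is cut out by $F = x^{2} + y^{2} z + z^{n-1}$ and $C$ is the zero locus on $X$ of $g = z - y - \phi(x, y, z)$ with $\mult_{0} \phi \geqslant 2$. The initial forms $F_{2} = x^{2}$ and $g_{1} = z - y$ identify the reduced tangent cone of $C$ at $P$ with the line $\{x = 0,\ y = z\} \subset T_{P}\CC^{3}$, spanned by $(0, 1, 1)$. Since $P$ is $\Gm$-fixed and $C$ is $\Gm$-invariant, this tangent line is stable under the linearized $\Gm$-action on $T_{P}\CC^{3}$.

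The key step is to show that the basis $(x, y, z)$ provided by the lemma automatically diagonalizes the linearized $\Gm$-action on $T_{P}\CC^{3}$. Since $(F)$ is a $\Gm$-invariant principal ideal, $\sigma \cdot F = u(\sigma, \cdot)\,F$ for some unit $u$ in the analytic local ring at $P$. Writing $\sigma \cdot x_{i} = \sum_{j} A_{ij}(\sigma)\,x_{j} + (\text{higher order})$, the degree-$2$ jet of this identity gives $A_{12} = A_{13} = 0$ and $A_{11}(\sigma)^{2} = u(\sigma, 0)$; the degree-$3$ jet, in which the right-hand side has no $y^{3}$ or $z^{3}$ contributions, forces $A_{22}^{2} A_{32} = A_{23}^{2} A_{33} = 0$, whence $A_{23} \equiv A_{32} \equiv 0$ by continuity at $\sigma = 1$. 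Thus $\Gm$ acts diagonally in Lemma's basis, and matching the three semi-invariant monomials $x^{2}$, $y^{2} z$, $z^{n-1}$ of $F$ yields $2 w_{x} = 2 w_{y} + w_{z} = (n-1) w_{z}$, so $(w_{x}, w_{y}, w_{z})$ is proportional to $(n-1, n-2, 2)$. After rescaling $\Gm$ one may take equality.

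For $n \geqslant 5$ the weights $w_{y} = n - 2$ and $w_{z} = 2$ are distinct, so the only $\Gm$-invariant lines inside the tangent-cone plane $\{x = 0\} \subset T_{P}\CC^{3}$ are the $y$- and $z$-axes, neither of which is the line spanned by $(0, 1, 1)$. This contradicts the $\Gm$-invariance of the tangent line established at the end of the first paragraph, giving $n \leqslant 4$; since a $\type{D}_{n}$ singularity requires $n \geqslant 4$, we conclude $n = 4$.

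The delicate point, I expect, is the diagonalization: Lemma's coordinates are not assumed to be $\Gm$-equivariant, yet the semi-invariance of $F$ merely modulo the unit $u = u_{0} + u_{1} x + u_{2} y + u_{3} z + \cdots$ must force the Jacobian of the $\Gm$-action at $P$ to be diagonal in this basis, and the bookkeeping when matching cubic jets requires care to account for all contributions from the higher-order terms of $u$.
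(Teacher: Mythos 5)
Your underlying mechanism is the same as the paper's: for $n\geqslant 5$ the $\Gm$-weights of the two ``horizontal'' tangent directions at a $\type{D}_n$ point must differ, which is incompatible with the invariance of the cuspidal tangent direction $y=z$. The paper gets this almost for free by first choosing a $\Gm$-equivariant embedding of the germ into $\CC^3$ with \emph{semi-invariant} coordinates (possible since $\Gm$ is linearly reductive): then a semi-invariant generator of the invariant principal ideal $(F)$ has all monomials of equal weight, so $2w_x=2w_y+w_z=(n-1)w_z$ follows at once from the normal form, and the local equation $z-y-\phi$ of $C$ cannot be semi-invariant. You instead work in the (non-equivariant) coordinates of Lemma~\xref{lemma:local} and try to recover the weights by matching jets of the identity $\sigma^*F=uF$. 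Your degree-$2$ and degree-$3$ computations are correct and do show that $\CC\,x$ is invariant in the cotangent space, that the induced action on the plane $\{x=0\}\subset T_PX$ is diagonal with characters $A_{22},A_{33}$, and that $A_{11}^2=u(0)=A_{22}^2A_{33}$. (The claim that the action is fully diagonal in this basis is not established --- $A_{21}$ and $A_{31}$ are uncontrolled --- but this is harmless, since only the restriction to $\{x=0\}$ matters; you should also say a word about why the action extends to the ambient germ so that $\sigma^*F=uF$ makes sense.)

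The genuine gap is the step where you assert that the weights are proportional to $(n-1,n-2,2)$. The relations you have actually proved, namely $2w_x=2w_y+w_z$, are perfectly consistent with $w_y=w_z$ (e.g.\ $(3,2,2)$), and in that case \emph{every} line in the plane $\{x=0\}$ is invariant and your tangent-cone argument produces no contradiction. The missing relation $(n-1)w_z=2w_x$ comes from the monomial $z^{n-1}$, i.e.\ from matching jets in degree $n-1\geqslant 4$, and there the computation is no longer clean: reducing modulo $(x,y)$, the quadratic part of $\sigma^*x$ contributes a term $c_2^2z^4$ and the quadratic part of $\sigma^*y$ contributes $d_2^2A_{33}z^5$, etc., so one gets identities such as $c_2^2+A_{33}^{4}=u(0)$ for $n=5$ rather than $A_{33}^{n-1}=u(0)$. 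Controlling these cross-terms is exactly the ``bookkeeping'' you defer, and it is not routine; as written the proof does not exclude $A_{22}=A_{33}$ and is therefore incomplete. The most economical repair is the paper's: invoke linear reductivity to choose the coordinates semi-invariant \emph{before} normalizing the equation, after which all three weight relations are immediate and the jet analysis becomes unnecessary.
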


\begin{proof}
Suppose that $n>4$. There exists a $\Gm$-equavariant embedding of the germ
$P\in X$ to $\CC^3$.
Let us choose $\Gm$-semi-invariant  coordinates in $\CC^3$.
By Lemma~\ref{lemma:local}, we see that $C=X\cap H$, where the equation of $H$  has the form  $z=y+\phi(x,y,z)$.
But this equation cannot be $\Gm$-semi-invariant, which is a contradiction.
\end{proof}

Now, we are ready to prove Proposition~\xref{prop:d-1}.

\begin{proof}[Proof of Proposition~\xref{prop:d-1}]

Let $X$ be a~Du Val del Pezzo surface of degree $1$.
Then $X$ does not admit an effective $\Ga$-action by Lemma~\ref{lemma:dP1-Ga}.
Thus, the~group $\Aut(X)$ is infinite if and only if $X$ admits an effective action of the~group $\Gm$.

Suppose that $X$ admits an effective $\Gm$-action.
By Lemma~\ref{lemma:degree-1-Gm-Aut},
to complete the~proof, it is~enough to show that $X$ is one of the~surfaces~\ref{d=1:E8}, \ref{d=1:E7-A1}, \ref{d=1:E6-A2}, \ref{d=1:2D4}.

Let $\Phi\colon X\dasharrow\PP^1$ be the~anticanonical map.
It is $\Gm$-equivariant, and all its~fibers are reduced irreducible curves of arithmetic genus $1$.
Since $\Gm$ cannot effectively act on a smooth elliptic curve, we conclude that $\Gm$ acts non-trivially on the~base of $\Phi$.
There are exactly two $\Gm$-invariant fibers. Denote them by $C_1$ and $C_2$.
Any fiber of $\Phi$ different from $C_1$ and $C_2$ is a smooth elliptic curve.
Thus we have
\[
\uprho(X)+2=\chi(X)=\chi(C_1)+\chi(C_2)-1,
\]
so that $\chi(C_1)+\chi(C_2)=\uprho(X)+3\geqslant 4$.
Since $\chi(C_i)\leqslant 2$, we have $\uprho(X)=1$ and $\chi(C_1)=\chi(C_2)=2$.
This means in particular that $C_1$ and $C_2$ are cuspidal curves of arithmetic genus $1$.

Let $P_i$ be the~singular point of $C_i$. Then
$$
\varnothing \neq\Sing(X)\subset\big\{P_1,\, P_2\big\}.
$$
The singularity of the~surface $X$ at the~point $P_1$ is of type $\type{A}_{n_1}$, $\type{D}_{n_1}$ or $\type{E}_{n_1}$ for some $n_1\geqslant 1$.
Likewise,~if $X$ is singular at $P_2$, then
$P_2$ is a singular point of type $\type{A}_{n_2}$, $\type{D}_{n_2}$ or $\type{E}_{n_2}$ for some~$n_2\geqslant 0$,
where $n_2=0$ simply means that the~point $P_1$ is the~only singular point of the~del Pezzo surface~$X$.
Without loss of generality, we may assume that $n_1\geqslant n_2$.
Since $\uprho(X)=1$, $n_1+n_2=8$ and $n_1\geqslant 4$.
Now, using Corollary~\ref{corollary:local},
we obtain the~following possibilities for $\Type(X)$: $\type{E_8}$, $\type{E_7}\type{A_1}$, $\type{E_6}\type{A_2}$, $2\type{D_4}$.
But $X$ is uniquely determined by $\Type(X)$ and the~fact
that $|-K_X|$ has two singular curves that are both cuspidal.
This follows from \cite[Theorem~1.2]{Ye2002} and \cite[Table~4.1]{Ye2002},
see also Remark~\ref{remark:d-1-E8}.
Thus, we conclude that $X$ is one of the~surfaces~\ref{d=1:E8}, \ref{d=1:E7-A1}, \ref{d=1:E6-A2}, \ref{d=1:2D4} (see e.g. \cite[Satz~2.11]{Brieskorn}).
\end{proof}

Let us conclude this section by an observation that the~surfaces \ref{d=1:E7-A1}, \ref{d=1:E6-A2}, \ref{d=1:2D4}
can be obtained as finite quotients of other surfaces in Big Table.
The~surface \ref{d=1:E7-A1} is the~quotient of the~surface~\ref{d=2:E6}~by~$\mumu_2$.
The~surface \ref{d=1:E6-A2} is the~quotient of~the~cubic surface \ref{d=3:D4} by $\mumu_3$,
and~\ref{d=1:2D4} is the~quotient of a special member of the~family~\ref{d=4:2A1-8lines} by $\mumu_2\times\mumu_2$.
In all the~cases the~action of the~group is free outside the~singular locus.
This observation can be used to obtain the~description of the~surfaces \ref{d=1:E7-A1}, \ref{d=1:E6-A2},~\ref{d=1:2D4}.
To show  this one can look at the~exact sequence
\[
0\longrightarrow \Pic(X) \overset{\alpha}  \longrightarrow \Cl(X) \overset{\beta} \longrightarrow \bigoplus_{P\in X}\Cl(X,P),
\]
where $\Cl(X,P)$ is the~local Weil divisor class group of the point $P\in X$.
The~map $\alpha$ is a primitive embedding. Hence,  we have $\Cl(X)=\Pic(X)\oplus \Cl(X)_{\mathrm{tors}}$.
By Corollary~\ref{corollary:Cl=Z}, we have
$$
\Cl(X)_{\mathrm{tors}}\neq 0.
$$
By Lemma~\ref{lemma:Cl}\ref{lemma:Cl:tors},
the~group $\Cl(X)_{\tors}$ defines a~Galois abelian cover $\pi\colon X^\prime\to X$
which is \'etale outside of the~locus $\Sing(X)$ and whose degree is $|\Cl(X)_{\tors}|$.
Using the~local description of such covers (see \cite{YPG,Brieskorn}),
we see that $\Type(X^\prime)=\type{E}_6$, $\type{D}_4$, $2\type{A}_1$ in the cases  \ref{d=1:E7-A1}, \ref{d=1:E6-A2}, \ref{d=1:2D4}, respectively.

\section{Del Pezzo surfaces of degree $2$}
\label{section:deg-2}

In this section, we prove Main Theorem for del Pezzo surfaces of degree $2$.
To do this, we need one (probably known) result about singular cubic and quartic curves (cf. \cite{Hui,Wall}).

\begin{proposition}
\label{proposition:quartic-curves}
Let $C$ be a reduced cubic or quartic curve in $\PP^2$ such that $\Aut(\PP^2,C)$ is infinite.
Then the~curve $C$ and the~group $\Aut^0(\PP^2,C)$ are given in the~following table:
\newline\begin{center}\renewcommand{\arraystretch}{1.3}\rm
\begin{longtable}{|p{0.8\textwidth}|c|}
\hline
\quad\centering\text{Equation of the~curve $C$ up to the~action of $\PGL_3(\Bbbk)$} \quad&\quad $\Aut^0(\PP^2,C)$ \quad\\
\hline\hline
\centering$x_0x_1(x_0+x_1)=0$&$\Ga^2\rtimes\Gm$
\\
\hline
\centering$x_0x_1x_2=0$&$\Gm^2$
\\
\hline
\centering$x_0(x_0x_2+x_1^2)=0$&$\BB_2$
\\
\hline
\centering$x_0^2x_2+x_1^3=0$&$\Gm$
\\
\hline
\centering$x_1(x_0x_2+x_1^2)=0$&$\Gm$
\\
\hline
\centering$x_0x_1(x_0-x_1)(x_0-\lambda x_1)=0$ for $\lambda\in\Bbbk\setminus\{0,1\}$&$\BB_2$
\\
\hline
\centering$x_0(x_0^2x_2+x_1^3)=0$&$\Gm$
\\
\hline
\centering$x_0x_1(x_0x_2+x_1^2)=0$&$\Gm$
\\
\hline
\centering$(x_0x_2+x_1^2)^2-x_0^4=0$&$\Ga$
\\
\hline
\centering$x_2(x_0^2x_2+x_1^3)=0$&$\Gm$
\\
\hline
\centering$x_0x_1x_2(x_1+x_2)=0$&$\Gm$
\\
\hline
\centering$x_0x_1(x_0x_1+x_2^2)=0$&$\Gm$
\\
\hline
\centering$x_0^3x_2+x_1^4=0$&$\Gm$
\\
\hline
\centering$x_1(x_0^2x_2+x_1^3)=0$&$\Gm$
\\
\hline
\centering$(x_2^2+x_0x_1)(x_2^2+\lambda x_0x_1)=0$ for $\lambda\in\Bbbk\setminus\{0,1\}$&$\Gm$
\\
\hline
\end{longtable}
\end{center}
\end{proposition}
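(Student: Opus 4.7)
The plan is to determine the connected component $G := \Aut^{0}(\PP^{2}, C)$ as an algebraic subgroup of $\PGL_{3}(\Bbbk)$, and then for each possibility list the $G$-invariant reduced curves of degree $3$ or $4$.

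The first step is to show that $G$ is solvable. The only positive-dimensional connected non-solvable subgroups of $\PGL_{3}(\Bbbk)$ are, up to conjugation, $\PGL_{3}(\Bbbk)$ itself and the image of $\PGL_{2}(\Bbbk)$ under the symmetric-square representation, whose only proper invariant subvariety of $\PP^{2}$ is a smooth conic $Q$ of degree $2$. Since $\PGL_{3}(\Bbbk)$ has no proper invariant subvariety of $\PP^{2}$ at all, and since $Q$ is the unique proper invariant of the $\PGL_{2}(\Bbbk)$ above, neither preserves a reduced curve of degree $3$ or $4$, and so $G$ is solvable.

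The second step is to place $G$ in standard form. By the Borel fixed-point theorem applied to $\PP^{2}$ and its dual, the connected solvable group $G$ fixes a point $P_{0}\in\PP^{2}$ and a line $\ell_{0}\subset\PP^{2}$. After conjugation in $\PGL_{3}(\Bbbk)$, I may take $P_{0}=(0{:}0{:}1)$ and $\ell_{0}=\{x_{0}=0\}$ or $\ell_{0}=\{x_{2}=0\}$ depending on whether $P_{0}\in\ell_{0}$; in either case $G$ sits inside $\BB_{3}$ (respectively inside the torus $\Gm^{2}\subset\BB_{3}$). The possibilities for $G$ up to conjugation are then the connected subgroups of $\BB_{3}$, which form the short list already appearing in the Notations of Section~\ref{section:intro}: subtori of $\Gm^{2}$, connected unipotent subgroups of $\UU_{3}$, and their semidirect products such as $\Ga\rtimes_{(n)}\Gm$, $\Ga^{2}\rtimes\Gm$, $\UU_{3}\rtimes\Gm$, and $\BB_{3}$ itself.

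The third step is, for each candidate $G$, to determine the $G$-invariant reduced curves of degree $3$ or $4$. The key constraint is that the finite set $\Sing(C)\cup(C\cap\ell_{0})$ is $G$-invariant and hence pointwise fixed by $G$; this pins down the combinatorial type of $C$ (its irreducible components and the position of its singular points relative to the flag $(P_{0},\ell_{0})$), and, after exhausting the remaining conjugation freedom coming from the normalizer of $G$ in $\PGL_{3}(\Bbbk)$, leaves only a discrete set of normal forms for the defining polynomial of $C$, together with the continuous parameter $\lambda$ in the two entries of the table where it appears. Concretely, the invariants can be computed explicitly by linear algebra, viewing the space of homogeneous forms $H^{0}(\PP^{2},\OOO_{\PP^{2}}(d))$ for $d=3,4$ as an $\mathfrak{sl}_{3}(\Bbbk)$-module and cutting out the annihilator of the candidate equation.

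The main obstacle is bookkeeping: many candidate groups $G$ must be considered, and for each several configurations of $C$ are possible. A delicate part is verifying that the $G$ listed in the table is the full identity component and not merely a subgroup; this is ensured by computing the Lie algebra of $\Aut^{0}(\PP^{2},C)$ as the stabilizer of the defining equation in $\mathfrak{sl}_{3}(\Bbbk)$ and matching dimensions with the claimed $G$. A parallel effort is needed to distinguish nearby configurations such as four concurrent lines versus three concurrent lines plus one transverse line, which have genuinely different stabilizers and must be separated by inspection of incidence data rather than by degree and singularity type alone.
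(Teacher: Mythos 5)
Your route is genuinely different from the paper's: you classify the possible connected stabilizers inside $\PGL_3(\Bbbk)$ first and then look for invariant curves, whereas the paper works curve-first, splitting $C$ by its combinatorial type (irreducible, line plus cubic, two conics, two lines plus a conic, four lines) and then separately analysing which curves admit a $\Ga$-action and which admit a $\Gm$-action, computing each stabilizer at the end by direct calculation. Your plan could be made to work, but as written it has two concrete gaps. First, the classification of non-solvable connected subgroups of $\PGL_3(\Bbbk)$ in your first step is false: besides $\PGL_3(\Bbbk)$ and the irreducibly embedded $\PGL_2(\Bbbk)$ there are the two maximal parabolic subgroups $\Ga^2\rtimes\GL_2(\Bbbk)$ and the reducibly embedded copies of $\SL_2(\Bbbk)$ and $\GL_2(\Bbbk)$ (and various groups in between). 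The conclusion you want is still true, but the correct argument is that any connected non-solvable subgroup contains an image of $\SL_2(\Bbbk)$ acting either irreducibly or reducibly on $\Bbbk^3$; in the two cases the only invariant irreducible curves are a smooth conic, respectively a line, and since a connected group fixes each irreducible component of $C$, no reduced curve of degree $3$ or $4$ can be invariant. (Note also that solvability is dispensable: for completeness of the table it suffices that an infinite $\Aut(\PP^2,C)$ contains a copy of $\Ga$ or $\Gm$, which holds for any infinite linear algebraic group.)

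Second, and more seriously for the execution, the connected subgroups of $\BB_3$ do not form a short list up to conjugacy: the one-dimensional subtori of $\Gm^2$ fall into infinitely many conjugacy classes (one for each primitive character), as do the groups $\Ga\rtimes_{(n)}\Gm$, so ``for each candidate $G$ list the invariant curves'' is not a finite case analysis as described. The standard repair --- and what the paper actually does --- is to run the analysis only for the two one-parameter subgroups $\Ga$ and $\Gm$, classify the invariant reduced cubics and quartics using the orbit structure of a single such action, and only afterwards compute the full identity component of the stabilizer of each normal form. A smaller slip: your ``key constraint'' that $\Sing(C)\cup(C\cap\ell_0)$ is a finite, hence pointwise fixed, set fails exactly when $\ell_0$ is a component of $C$, which happens for many rows of the table, so that part of the bookkeeping needs a separate branch.
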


\begin{proof}
If $C$ is one of the~curves in the~table, we can explicitly describe $\Aut^0(\PP^2,C)$ by finding all elements in $\Aut(\PP^2)\cong\PGL_3(\Bbbk)$
that leaves every irreducible component of the~curve $C$ invariant.
For example, if $C$ is given by $x_0x_1(x_0+x_1)=0$, then $\Aut^0(\PP^2,C)$ consists of the~transformations
$$
(x_0:x_1:x_2)\longmapsto\big(tx_0:tx_1:x_2+ax_0+bx_1\big)
$$
for any $t\in\Bbbk^\ast$, $a\in\Bbbk$ and $b\in\Bbbk$. Thus, in this case, we have $\Aut^0(\PP^2,C)\cong\Ga^2\rtimes\Gm$ as required.

Similarly, if $C$ is given by $x_0(x_0x_2+x_1^2)=0$, then $\Aut^0(\PP^2,C)$ consists of the~transformations
$$
(x_0:x_1:x_2)\longmapsto\Big(t^2x_0:tx_1+ax_0:x_2-\frac{2a}{t}x_1-\frac{a^2}{t^2}x_0\Big)
$$
for any $t\in\Bbbk^\ast$ and $a\in\Bbbk$, so that $\Aut^0(\PP^2,C)\cong\Ga\rtimes_{(1)}\Gm$.
Likewise, if $C$ is the~cubic $x_0^2x_2+x_1^3=0$, then $\Aut^0(\PP^2,C)$ consists of the~transformations
$(x_0:x_1:x_2)\longmapsto(t^3x_0:t^2x_1:x_2)$, where $t\in\Bbbk^\ast$.
Thus, in this case, we have $\Aut^0(\PP^2,C)\cong\Gm$.

The computations are very similar in all remaining~cases.
For instance, if $C$ is the~quartic curve that is given by $(x_0x_2+x_1^2)^2-x_0^4=0$, then $\Aut(\PP^2,C)$ consists of the~transformations
$$
(x_0:x_1:x_2)\longmapsto\Big(\zeta^2 x_0:\zeta x_1-\frac{\zeta a}2x_0:x_2+ax_1-\frac{a^2}{4}x_0\Big)
$$
where $\zeta\in\{\pm 1,\pm i\}$ and $a\in\Bbbk$, which implies that $\Aut(\PP^2,C)\cong\Ga\rtimes\mumu_4$, so that $\Aut^0(\PP^2,C)\cong\Ga$.
We leave the~computations in the~remaining cases to the~reader.

Therefore, to complete the~proof, we must show that $C$ is one of the~curves listed in the~table.
If $C$ is the~cubic curve, then $C$ must be singular.
On the~other hand, all singular cubic curves are already listed in the~table except for the~nodal one that is given by
$$
x_2(x_0^2+x_1^2)+x_1^2=0.
$$
However, if $C$ is this curve, then $\Aut(\PP^2,C)$ is finite.
Therefore, we may assume that $\deg(C)=4$. Then we may have the~following five cases:
\begin{enumerate}
\item\label{plane-quadrtic1}
the~curve $C$ is irreducible;
\item\label{plane-quadrtic2}
$C=C_1+C_2$, where $C_1$ is a line and $C_2$ is an irreducible cubic;
\item\label{plane-quadrtic3}
$C=C_1+C_2$, where $C_1$ and $C_2$ are irreducible conics;
\item\label{plane-quadrtic4}
$C=C_1+C_2+C_3$, where $C_1$ and $C_2$ are lines, and $C_2$ are irreducible conic;
\item\label{plane-quadrtic5}
$C=C_1+C_2+C_3+C_3$, where $C_1$, $C_2$, $C_3$ and $C_4$ are lines.
\end{enumerate}
Moreover, by our assumption, the~group $\Aut(\PP^2,C)$ contains a subgroup isomorphic to either $\Ga$ or $\Gm$~(or~both).
We deal with these two (slightly overlapping) possibilities separately.

Suppose that $\Aut(\PP^2,C)\supset \Ga$.
Since each irreducible component is $\Ga$-invariant, we conclude that the~case \ref{plane-quadrtic2} is impossible.
Likewise, if $C$ is irreducible, then $C$ has a $\Ga$-open orbit $U\cong\Aff^1$.
Hence the~curve $C$ must be rational and its normalization morphism must be a homeomorphism, and the~complement $C\setminus U$
is a single point, say $P$.
The projection $C\dashrightarrow \PP^1$ from $P$ must be $\Ga$-equivariant, so it is an isomorphism on $U$.
This implies that $P$ is a triple point and there is exactly one line $L\subset \PP^2$ such that $C\cap L=P$.
We may assume that $P=(0:0:1)$ and $L$ is given by $x_0=0$.
Then the~equation of $C$ has the~form $x_0^3x_2+x_1^4=0$.
But then \mbox{$\Aut^0(\PP^2,C)\cong\Gm$}, which is a contradiction.
Hence, we conclude that case \ref{plane-quadrtic1} is also impossible.

If we are in case \ref{plane-quadrtic3}, then the~$\Ga$-action of each irreducible conic $C_1$ and $C_2$ is effective,
so that the~intersection $C_1\cap C_2$ consists of one point.
In this case, in appropriate projective coordinates, the~curve $C$ is given by
$$
\big(x_1x_2+x_0^2+x_1^2\big)\big(x_1x_2+x_0^2-x_1^2\big)=0,
$$
so that $C$ is listed in the~table as required.

If we are in case \ref{plane-quadrtic4} or case \ref{plane-quadrtic5}, then the~closure of any one-dimensional $\Ga$-orbit is a~line in the~pencil generated by $C_1$ and $C_2$,
which implies that $C$ is~a~union of four lines passing through one point.
Hence, we are in case \ref{plane-quadrtic5}, and the~curve $C$ can be given by
$$
x_0x_1(x_0-x_1)(x_0-\lambda x_1)=0
$$
for some $\lambda\in\Bbbk\setminus\{0,1\}$, so that $C$ is in the~table as well.

To complete the~proof, we may assume that $\Aut(\PP^2,C)\supset \Gm$.
If $C$ is irreducible, then it can be given as the~closure of the~image of the~map $t\mapsto(1:t:t^4)$,
so that $C$ is the~curve $x_0^3x_2-x_1^4=0$ in the~table.
Hence, we may assume that $C$ is reducible, i.e. we are not in case \ref{plane-quadrtic1}.

Suppose that $\Gm$ acts trivially on some irreducible component of the~curve $C$.
This component must be a line, so that we are in one of the~cases \ref{plane-quadrtic2}, \ref{plane-quadrtic4} or \ref{plane-quadrtic5}.
Without loss of generality, we may assume that $\Gm$ acts trivially on the~line $C_1$.
Then there exists a~$\Gm$-fixed point $O\in \PP^2\setminus C_1$,
so~that the~closure of any $\Gm$-orbit in $\PP^2$ is a~line connecting $O$ and a~point in $C_1$.
This implies that we are in case \ref{plane-quadrtic5}, and
the~lines $C_2$, $C_3$, $C_4$ all pass through $O$, so that $C$ is given by
$$
x_0x_1x_2(x_1+x_2)=0
$$
in appropriate projective coordinates. This curve is in the~table.
Therefore, to complete the~proof, we may assume that $\Gm$ acts effectively on each irreducible component of the~curve $C$.

If we are in case \ref{plane-quadrtic5}, then each line among $C_1$, $C_2$, $C_3$ and $C_4$ meet the~union of the~remaining lines in at most two points.
This implies that all these four lines must pass through one point.
Such curve is in the~table and we already met it earlier in the~proof.
Hence, case \ref{plane-quadrtic5} is done.

Suppose that we are in case \ref{plane-quadrtic3}, so that $C=C_1+C_2$, where both $C_1$ and $C_2$ are irreducible conics.
Then the~intersection $C_1\cap C_2$ consists of at most two points.
Moreover, the~intersection cannot consists of one point, since otherwise we would have $\Aut(\PP^2,C)\cong\Ga$.
Hence, we see that the~intersection $C_1\cap C_2$ consists of exactly two points.
Then the~curve $C$ can be given by
$$
(x_2^2+x_0x_1)(x_2^2+\lambda x_0x_1)=0
$$
for some $\lambda\in\Bbbk\setminus\{0,1\}$.
This curve is also in the~table. Thus, case \ref{plane-quadrtic3} is also done.

Now we suppose that we are in case \ref{plane-quadrtic4}. Then $C_1$ and $C_2$ are lines, and $C_3$ is a conic.
Then
$$
\#\Big(C_3\cap\big(C_1\cup C_2\big)\Big)\leqslant 2,
$$
so that at least one of the~lines $C_1$ and $C_2$ must be tangent to $C_3$.
If only one of them is tangent, then $C$ can be given by $x_0x_1(x_0x_2+x_1^2)=0$.
Similarly, if both lines are tangent to the~conic $C_3$,
then $C$ can be given by $x_0x_1(x_0x_1+x_2^2)=0$.
In both subcases, the~curve $C$ is in the~table.

Finally, we suppose that we are in case \ref{plane-quadrtic2}.
Then $C_2$ is a cuspidal cubic curve.
Now, choosing appropriate coordinates on $\PP^2$, we may assume that $C_2$ is given by
$$
x_0^2x_2-x_1^3=0,
$$
and the~$\Gm$-action on $\PP^2$ is described earlier in the~proof.
Then the~$\Gm$-action on $C_2$ has exactly two fixed points: the~points $(0:0:1)$ and $(1:0:0)$.
If the~line $C_1$ passes through both of them, then the~curve $C$ is given by
$$
x_0x_1(x_0x_2+x_1^2)=0.
$$
Similarly, if $(1:0:0)\in C_1$ and $(0:0:1)\notin C_1$, then the~curve $C$ is given by $x_2(x_0^2x_2+x_1^3)=0$.
Vice versa, if $(1:0:0)\notin C_1$ and $(0:0:1)\in C_1$, then the~curve $C$ is given by $x_0(x_0^2x_2+x_1^3)=0$.
In every subcase, we see that the~quartic curve $C$ is listed in the~table as required.
\end{proof}

Using Proposition~\ref{proposition:quartic-curves}, we immediately obtain

\begin{corollary}
\label{corollary:d-2}
Main Theorem holds for del Pezzo surfaces of degree $2$.
\end{corollary}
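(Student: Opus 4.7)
The strategy is to reduce the classification for degree $2$ to the classification of reduced plane quartics with infinite projective stabilizer, which is precisely the content of Proposition~\ref{proposition:quartic-curves}.

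Let $X$ be a Du Val del Pezzo surface with $K_X^2=2$. By Theorem~\ref{theorem:delPezzo}, the anticanonical linear system is base point free and realizes $X$ as a double cover $\Phi\colon X\to\PP^2$ branched over a reduced quartic curve $B\subset\PP^2$. The cover $\Phi$ is $\Aut(X)$-equivariant, and since $B$ determines $\Phi$ up to the Galois involution, pulling back automorphisms along $\Phi$ yields a short exact sequence
\[
1\longrightarrow\mumu_2\longrightarrow\Aut(X)\longrightarrow\Aut(\PP^2,B)\longrightarrow 1,
\]
where $\mumu_2$ is generated by the cover involution. Because $\mumu_2$ is discrete and central, the restriction to connected components induces an isomorphism $\Aut^0(X)\cong\Aut^0(\PP^2,B)$. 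Hence $\Aut(X)$ is infinite if and only if $\Aut(\PP^2,B)$ is infinite. Moreover, $X$ has only Du Val singularities if and only if $B$ has only singularities of type $\type{A}_n$, $\type{D}_n$ or $\type{E}_n$ in the classical sense for plane curves (with the double cover then acquiring the corresponding Du Val point).

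Assuming $\Aut(\PP^2,B)$ is infinite, Proposition~\ref{proposition:quartic-curves} lists the ten (families of) reduced quartics $B$ up to $\PGL_3(\Bbbk)$, together with $\Aut^0(\PP^2,B)$. For each such $B$, the plan is to (i) examine the singularities of $B$ to determine whether the double cover $X$ has only Du Val singularities, (ii) when it does, read off $\Type(X)$ from the local types of $\Sing(B)$ (for instance a node of $B$ gives an $\type{A}_1$ on $X$, a cusp gives $\type{A}_2$, a tacnode gives $\type{A}_3$, and transverse triple and higher multiplicity points give non-Du Val singularities and must be discarded), (iii) identify $X$ with the corresponding entry in Big Table, and (iv) transport the described $\Aut^0(\PP^2,B)$-action to an $\Aut^0(X)$-action through the isomorphism above. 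For example, $B=x_0x_1(x_0-x_1)(x_0-\lambda x_1)=0$ (four lines through a point, with $\lambda\neq 0,1$) produces a surface with $\Type(X)=\type{A}_7$ or a nearby type and $\Aut^0(X)\cong\BB_2$, matching the entry~\ref{d=2:A7}; the irreducible quartic $x_0^3x_2+x_1^4=0$ produces the surface~\ref{d=2:E6} with $\Aut^0(X)\cong\Gm$; and so on through the remaining cases. Each of the quartics in the table whose singularities are all simple (ADE) yields exactly one surface of Big Table of degree $2$.

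The main obstacle is purely bookkeeping in step (i)--(iv): for each of the ten entries of the table in Proposition~\ref{proposition:quartic-curves} one must compute the local analytic type of $B$ at each of its singularities, determine whether the resulting double cover has only Du Val singularities (eliminating, e.g., branch curves with a triple point or a singularity worse than $\type{E}_8$), work out $\Type(X)$, and then match the data against Big Table. The lifting of $\Aut^0(\PP^2,B)$ to $\Aut^0(X)$ is automatic from the isomorphism established above, so no further group-theoretic work is needed beyond the explicit formulas for $\Aut^0(\PP^2,B)$ already produced in the proof of Proposition~\ref{proposition:quartic-curves}. Once every case has been matched, the list of degree~$2$ del Pezzo surfaces with infinite automorphism group coincides with the entries of Big Table of degree~$2$, which is precisely the statement of Main Theorem in this degree.
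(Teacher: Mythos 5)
Your overall strategy is exactly the paper's: realize $X$ as the anticanonical double cover of $\PP^2$ branched in a reduced quartic $B$, use equivariance to pass to $\Aut^0(\PP^2,B)$, and then read everything off from Proposition~\ref{proposition:quartic-curves}. However, two of your concrete claims are wrong in ways that would derail the case analysis. First, your rule that ``transverse triple and higher multiplicity points give non-Du Val singularities and must be discarded'' is false for ordinary triple points: the double cover of $w^2=\ell_1\ell_2\ell_3$ is a $\type{D}_4$ point, which is Du Val (and indeed this contradicts your own correct statement that Du Val singularities of $X$ correspond exactly to ADE singularities of $B$, since an ordinary triple point is the simple curve singularity $\type{D}_4$). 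Applying your rule would wrongly discard the quartic $x_0x_1x_2(x_1+x_2)=0$, which produces the surface~\ref{d=2:D4-3A1} with $\Type(X)=\type{D}_43\type{A}_1$. The only quartic in the table that must be excluded is the four concurrent lines $x_0x_1(x_0-x_1)(x_0-\lambda x_1)=0$, whose quadruple point gives a simple elliptic (non-Du Val) singularity.

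Second, your worked example is wrong on all counts: you match that same four-concurrent-lines quartic to the surface~\ref{d=2:A7} with $\Aut^0(X)\cong\BB_2$, but (a) that quartic is precisely the excluded one, (b) the branch curve of~\ref{d=2:A7} is $(x_0x_2+x_1^2)^2-x_0^4=0$, and (c) its automorphism group is $\Ga$, not the two-dimensional group $\BB_2$ --- no Du Val del Pezzo surface of degree $2$ has $\dim\Aut^0(X)\geqslant 2$. A smaller point: the kernel $\mumu_2$ of $\Aut(X)\to\Aut(\PP^2,B)$ need not be disjoint from $\Aut^0(X)$ a priori (when $\Aut^0(\PP^2,B)\cong\Gm$ the induced map could be an isogeny rather than an isomorphism), so ``isomorphism on identity components'' requires a word of justification; this does not affect the isomorphism type of $\Aut^0(X)$, but as stated it is unproved. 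With the exclusion criterion corrected and the dictionary between singularities of $B$ and entries of Big Table redone carefully, your argument becomes the paper's proof.
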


\begin{proof}
Let $X$ be a~Du Val del Pezzo surface of degree $2$.
Then $X$ is a hypersurface in $\PP(1,1,1,2)$  that is given by
$$
w^2=\phi_4(x_0,x_1,x_2),
$$
where $\phi_4(x_0,x_1,x_2)$ is a homogeneous polynomial of degree $4$.
The natural projection to $\PP^2$ gives a~double cover $\pi\colon X\to \PP^2$.
Let $B$ be the~branch curve of this double cover.
Then $B$ is the~quartic curve in $\PP^2$ that is given by $\phi_4(x_0,x_1,x_2)=0$.

Since the~double cover $\pi$ is $\Aut(X)$-equivariant, it gives a homomorphism
$\Aut^0(X)\to\Aut^0(\PP^2,B)$,
whose kernel is either trivial or isomorphic to $\mumu_2$.
Thus, the~curve $B$ must be one of the~quartic curves listed in the~table in Proposition~\ref{proposition:quartic-curves}
except for the~quartic curve consisting of four lines passing through one point, because $X$ has Du Val singularities.
Now, going through the~equations listed in the~table in Proposition~\ref{proposition:quartic-curves},
we obtain all possibilities for the~polynomial~$\phi_4(x_0,x_1,x_2)$.
This~shows that if $\Aut(X)$ is infinite, then
\begin{itemize}
\item
either $\Aut(X)\cong\Ga$ and $X$ is the~surface~\ref{d=2:A7},
\item
or $\Aut(X)\cong\Gm$ and $X$ is one of the~surfaces
\ref{d=2:E7}, \ref{d=2:D6-A1}, \ref{d=2:A5-A2}, \ref{d=2:D4-3A1}, \ref{d=2:2A3-A1}, \ref{d=2:E6}, \ref{d=2:D5-A1}, \ref{d=2:2A3}.
\end{itemize}

Vice versa, if $X$ is the~surface~\ref{d=2:A7}, then the~group $\Ga$ acts on $X$ as follows:
$$
\big(x_0:x_1:x_2:w\big)\longmapsto\big(x_0+tx_1:x_1:x_2-2tx_0-t^2x_1:w\big),
$$
where $t\in\Ga$. Thus, in this case, we have $\Aut(X)\cong\Ga$. Similarly, if $X$ is one of the~del Pezzo surfaces
\ref{d=2:E7}, \ref{d=2:D6-A1}, \ref{d=2:A5-A2}, \ref{d=2:D4-3A1}, \ref{d=2:2A3-A1}, \ref{d=2:E6}, \ref{d=2:D5-A1}, \ref{d=2:2A3},
then $X$ admits an effective action of the~group $\Gm$, so~that~$\Aut(X)\cong\Gm$ as listed in Big Table.
\end{proof}

\section{Cubic surfaces}
\label{section:cubic-surfaces}

Now, we prove Main Theorem for del Pezzo surfaces of degree $3$, which easily follows from \cite{Sakamaki2010}.
Nevertheless, we prefer to give an independent proof here.

\begin{lemma}
\label{lemma:cubic:Ga-Gm}
Let $X$be a~Du Val cubic surface in $\mathbb{P}^3$.
\begin{enumerate}
\item \label{lemma:cubic:Ga-Gm:1}
If $X$ contains three lines $L_1$, $L_2$, $L_3$ that meet each other at three distinct points \mbox{\textup(a triangle\textup)},
then $X$ does not admit an effective action of the~group $\Ga$.
\item \label{lemma:cubic:Ga-Gm:2}
If $X$ is toric, then $\uprho(X)=1$, $\numl(X)=3$, and the~toric boundary is composed of three lines forming a triangle.
\end{enumerate}
\end{lemma}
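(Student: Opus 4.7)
For \ref{lemma:cubic:Ga-Gm:1} I would argue by contradiction. If $\Ga \subset \Aut(X)$ acts effectively, then since the anticanonical embedding $X \hookrightarrow \PP^3$ is $\Aut(X)$-equivariant, the action extends to a linear $\Ga$-action on $\PP^3$. Because $\numl(X) < \infty$ and $\Ga$ is connected, each $L_i$ is $\Ga$-invariant, hence the three points $P_{ij} := L_i \cap L_j$ are $\Ga$-fixed. As $L_1 \cong \PP^1$ contains two distinct fixed points $P_{12}, P_{13}$, the $\Ga$-action on $L_1$ is trivial, and similarly for $L_2$ and $L_3$. The three lines are coplanar (since they pairwise intersect at distinct points), spanning a unique plane $\Pi$; by a degree count $\Pi \cap X = L_1 + L_2 + L_3$, so $\Pi$ is $\Ga$-invariant. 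Since $L_1 \cup L_2 \cup L_3$ is not contained in any line of $\Pi$, the induced $\Ga$-action on $\Pi$ must be trivial.

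Choosing coordinates with $\Pi = \{x_0 = 0\}$, the $\Ga$-action has the form $x_i \mapsto x_i + t a_i x_0$ for $i = 1,2,3$ and some nonzero $(a_1,a_2,a_3)$. Writing $F$ for the cubic defining $X$ and $F_0 := F|_{x_0 = 0}$, infinitesimal invariance forces $\sum_i a_i \partial_{x_i} F_0 = 0$. After a linear change in $(x_1,x_2,x_3)$ sending $(a_i)$ to $(1,0,0)$, this reads $\partial_{x_1} F_0 = 0$, so $F_0 \in \Bbbk[x_2,x_3]$. Every linear factor of $F_0$ then vanishes at $x_2 = x_3 = 0$, which would force $L_1, L_2, L_3$ to be concurrent at $(0:1:0:0) \in \Pi$, contradicting the triangle hypothesis. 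The step I expect to require the most care is this infinitesimal computation reducing the triangle configuration to a pencil.

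For \ref{lemma:cubic:Ga-Gm:2} let $T = \Gm^2 \subset \Aut(X)$ be the big torus and $D_1, \ldots, D_n$ the $T$-invariant prime divisors (one per ray of the fan); since a complete fan in $\mathbb{R}^2$ has at least three rays, $n \geq 3$. The toric identity $-K_X = \sum_i D_i$, combined with $(-K_X) \cdot D_i = \deg D_i \geq 1$ (because the anticanonical embedding is an isomorphism onto its image on each $D_i$), gives
\[
3 \;=\; (-K_X)^2 \;=\; \sum_{i=1}^n (-K_X) \cdot D_i \;\geq\; n,
\]
forcing $n = 3$ and $\deg D_i = 1$ for every $i$. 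Hence $\uprho(X) = n - 2 = 1$, each $D_i$ is a line, and the three maximal cones of the fan correspond to three pairwise distinct torus-fixed points $p_{ij} = D_i \cap D_j$, so $D_1 \cup D_2 \cup D_3$ is a triangle.

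Finally, $\numl(X) = 3$ because any line on $X$ is $T$-invariant (as $\numl(X) < \infty$ and $T$ is connected), while every irreducible $T$-invariant curve on a toric surface is the closure of a one-dimensional $T$-orbit, hence coincides with some $D_i$. The only substantive input for \ref{lemma:cubic:Ga-Gm:2} is the combination of the toric identity $\sum D_i = -K_X$ with the degree bound $(-K_X)^2 = 3$; I expect this part to be straightforward relative to the infinitesimal obstruction in \ref{lemma:cubic:Ga-Gm:1}.
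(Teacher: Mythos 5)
Your proposal is correct and follows essentially the same route as the paper for both parts: in (i) each line is $\Ga$-invariant with two fixed points, so the action is trivial on the triangle and hence on the plane it spans, and in (ii) the count $3=(-K_X)^2=\sum_i(-K_X)\cdot D_i\geqslant r=\uprho(X)+2$ is exactly the paper's argument. The only divergence is the endgame of (i): the paper observes that a $\Ga$-action on $\PP^3$ fixing a hyperplane pointwise has every orbit closure contained in a line, contradicting $\numl(X)<\infty$, whereas you run an equivalent (and also valid) infinitesimal computation showing that $F_0$ would depend on only two variables, forcing the three lines to be concurrent.
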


\begin{proof}
If $X$ contains a triangle and admits an effective $\Ga$-action,
then  the~$\Ga$-action is trivial on the~triangle,
so that this action is trivial on the~hyperplane in $\PP^3$ that passes through the~triangle,
which implies that the~closure of any $\Ga$-orbit in $X$ is contained in a~line.
The latter is impossible, since $X$ contains finitely many lines.
This proves \ref{lemma:cubic:Ga-Gm:1}

To prove \ref{lemma:cubic:Ga-Gm:2}, suppose that the~surface $X$ is toric.
Let $D=D_1+\cdots+D_r$ be the~toric boundary.
Then $r=\uprho(X)+2$. Since every line on $X$ is torus-invariant and $D\sim -K_X$, we have
$$
3=\sum_{i=1}^r (-K_X)\cdot D_i\geqslant r.
$$
Therefore, we conclude that $\uprho(X)=1$, $r=3$ and $-K_X\cdot D_1=-K_X\cdot D_2=-K_X\cdot D_3=1$.
Moreover, the~lines $D_1$, $D_2$, $D_3$ form a triangle, because the~pair $(X,D)$ has log canonical singularities.
\end{proof}

Now, we are ready to prove

\begin{proposition}
\label{proposition:deg-3-ind-3}
Main Theorem holds for weakly minimal cubic surfaces.
\end{proposition}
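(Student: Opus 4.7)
The plan is to combine Theorems~\ref{theorem:Fano-Weil-index-big} and~\ref{theorem:index>1} to reduce the~problem to an explicit finite list of surfaces, and then to determine $\Aut^0(X)$ case by case using a mixture of direct construction and the~structural bounds from Section~\ref{section:preliminaries}.

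Let $X$ be a~weakly minimal Du Val cubic surface. Since $\ind(X)$ divides $K_X^2=3$, either $\ind(X)=1$ or $\ind(X)=3$. In the~first case, Theorem~\ref{theorem:Fano-Weil-index-big} forces $X$ to be the~surface~\ref{d=3:A4-A1}, whose defining equation is recorded in Example~\ref{ex:A4A1cubic}. In the~second case every weakly minimal cubic must be singular (smooth cubics have $\ind=1$ and are never weakly minimal), so Theorem~\ref{theorem:index>1} lists $X$ as one of \ref{d=3:2A2}, \ref{d=3:2A2-A1}, \ref{d=3:3A2}, \ref{d=3:A5}, \ref{d=3:A5-A1}, \ref{d=3:E6}, each with a concrete defining equation. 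The~proposition thus reduces to the~verification of the~Main Theorem on these seven surfaces, which I plan to carry out one by one.

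For each of the~seven surfaces I would first produce a positive-dimensional algebraic subgroup of $\Aut(X)$ by inspection of the~defining equation. The~(near-)binomial shape of these equations makes this straightforward: the~equation $x_0x_2x_3=x_1^3$ of~\ref{d=3:3A2} is binomial, giving a faithful $\Gm^2$-action (so $X$ is toric, consistent with Lemma~\ref{lemma:cubic:Ga-Gm}\ref{lemma:cubic:Ga-Gm:2}); the~equation of~\ref{d=3:E6} admits an action of $\Ga\rtimes\Gm$ obtained by combining the~obvious diagonal torus with a one-parameter family of shears; and the~surface~\ref{d=3:A4-A1} carries the~$\Gm$-action $(x_0:x_1:x_2:x_3)\mapsto(tx_0:x_1:t^{-1}x_2:t^2x_3)$ preserving $x_0x_2x_3+x_0^2x_1+x_1^2x_3$. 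The~remaining cases \ref{d=3:2A2}, \ref{d=3:2A2-A1}, \ref{d=3:A5}, \ref{d=3:A5-A1} are handled analogously, each admitting a visible torus or $\Ga\rtimes\Gm$ subgroup.

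The main obstacle is the~matching upper bound: showing that the~subgroup constructed coincides with $\Aut^0(X)$. I plan to combine three structural restrictions. First, Corollary~\ref{corollary:connected-group-rational-surface} forces $\Aut^0(X)$ to be solvable of dimension at most~$5$, with reductive rank at most~$2$ and unipotent radical embedding into~$\UU_3$. Second, Lemma~\ref{lemma:cubic:Ga-Gm}\ref{lemma:cubic:Ga-Gm:1} rules out any effective $\Ga$-action whenever $X$ contains a triangle of lines, a condition that is verified directly on each defining equation (and which holds, for instance, on \ref{d=3:3A2}, \ref{d=3:2A2}, \ref{d=3:2A2-A1}, \ref{d=3:A4-A1}). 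Third, Corollary~\ref{corollary:Ga} rules out faithful $\Ga^2$-actions whenever $\numl(X)\neq\uprho(X)$. In each of the~seven cases these three restrictions together with a~simple dimension count pin down the~isomorphism type of $\Aut^0(X)$ recorded in Big Table; the~finest book-keeping is required to distinguish the~various semidirect products $\Ga\rtimes_{(n)}\Gm$ that arise, which I would do by computing the~weight with which the~constructed torus acts on the~one-parameter unipotent subgroup.
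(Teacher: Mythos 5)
Your reduction to the seven surfaces \ref{d=3:A4-A1}, \ref{d=3:2A2}, \ref{d=3:2A2-A1}, \ref{d=3:3A2}, \ref{d=3:A5}, \ref{d=3:A5-A1}, \ref{d=3:E6} via Theorems~\ref{theorem:Fano-Weil-index-big} and~\ref{theorem:index>1} is exactly what the paper does, and your treatment of the cases containing a triangle of lines (\ref{d=3:A4-A1}, \ref{d=3:3A2}, \ref{d=3:2A2-A1}, \ref{d=3:2A2}) and of \ref{d=3:A5-A1}, \ref{d=3:E6} matches the paper's: exhibit the subgroup, kill the unipotent radical with Lemma~\ref{lemma:cubic:Ga-Gm}\ref{lemma:cubic:Ga-Gm:1} or kill $\Ga^2$ with Corollary~\ref{corollary:Ga}, exclude toricity with Lemma~\ref{lemma:cubic:Ga-Gm}\ref{lemma:cubic:Ga-Gm:2}, and count dimensions.

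However, there is a genuine gap in the case \ref{d=3:A5}. You assert that this surface, like the others, admits ``a visible torus or $\Ga\rtimes\Gm$ subgroup''; it does not. Big Table records $\Aut^0(X)\cong\Ga$ for \ref{d=3:A5}, and the hard step there is to prove that $X$ admits \emph{no} effective $\Gm$-action, so that $\Aut^0(X)$ is unipotent and then one-dimensional by Corollary~\ref{corollary:Ga}. None of your three structural restrictions can exclude a torus: the three lines of \ref{d=3:A5} are concurrent, so there is no triangle, Corollary~\ref{corollary:connected-group-rational-surface} allows $\Ga\rtimes\Gm$, and Corollary~\ref{corollary:Ga} only bounds the unipotent part. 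The paper's argument is genuinely geometric: if $\Gm$ acted effectively, one of the concurrent lines would carry a $\Gm$-fixed smooth point $P$, and the $\Gm$-equivariant elementary transformation (blow up $P$, contract the proper transform of that line) would produce a Du Val del Pezzo surface of degree $2$ with a singular point of type $\type{A}_6$, contradicting the degree-$2$ classification already established in Corollary~\ref{corollary:d-2}. Without this (or some substitute, e.g.\ a direct computation showing the equation $y_3^2=y_2^3+y_1^6+y_1y_2y_3'$ admits no nontrivial semi-invariant grading), your case analysis cannot pin down $\Aut^0$ for \ref{d=3:A5}, and the proof is incomplete.
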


\begin{proof}
Let $X$ be a weakly minimal Du Val cubic surface.
If $\ind(X)=1$, then Theorem~\ref{theorem:Fano-Weil-index-big} implies that $X$ is the~surfaces~\ref{d=3:A4-A1},
and its basic properties are described in Example~\ref{ex:A4A1cubic}.
In this case, the~surface $X$ admits an algebraic torus action
\[
(x_0,x_1,x_2,x_3) \longmapsto (x_0,t x_1,t^2 x_2,t^{-1} x_3).
\]
Since $X$ contains three lines passing through one point, it is not toric.
Since $X$ contains a triangle, it does not admit an unipotent group action by Lemma~\ref{lemma:cubic:Ga-Gm},
so that $\Aut^0(X)\cong \Gm$ as required.

Thus, to complete the~proof, we may assume that $\ind(X)>1$.
By Theorem~\ref{theorem:index>1}, we have only the~following possibilities: \ref{d=3:E6}, \ref{d=3:A5-A1}, \ref{d=3:3A2}, \ref{d=3:A5}, \ref{d=3:2A2-A1}, \ref{d=3:2A2}.

Consider the~cases~\ref{d=3:3A2},~\ref{d=3:2A2-A1},~\ref{d=3:2A2}.
From the~equations in Theorem~\ref{theorem:index>1}, we see that $X$ contains a~triangle that is cut out by $y_1y_2=0$.
Then by Lemma~\ref{lemma:cubic:Ga-Gm}\ref{lemma:cubic:Ga-Gm:1},
we~conclude that the~unipotent radical of $\Aut^0(X)$ is trivial.
The~surface~\ref{d=3:3A2} is a toric cubic surface because its equation is binomial,
and the~surfaces~\ref{d=3:2A2-A1} and ~\ref{d=3:2A2} are not toric by Lemma~\ref{lemma:cubic:Ga-Gm}\ref{lemma:cubic:Ga-Gm:2}.
Therefore, if $X$ is the~surface~\ref{d=3:3A2}, then $\Aut^0(X)\cong\Gm^2$.
Similarly, if $X$ is one of the~surfaces~\ref{d=3:2A2-A1} or~\ref{d=3:2A2}, then we have $\Aut^0(X)\cong\Gm$,
because $X$ admits a diagonal effective action of the~group $\Gm$.

Now, we suppose that $X$ is the~surface~\ref{d=3:A5}. Then $\Type(X)=\type{A}_5$,
and it follows from Theorem~\ref{theorem:index>1} that $X$ is a hypersurface in $\PP(1,2,3,3)$ that is given~by
$$
y_3^2=y_2^3+y_1^6+y_1y_2y_3'.
$$
Let $L_1$, $L_2$, $L_3$ be the~curves $y_1=y_3^2-y_2^3=0$, $y_2=y_3-y_1^3=0$, $y_2=y_3+y_1^3=0$, respectively.
Then~$L_1$, $L_2$ and $L_3$ are lines meeting at one point.
If $X$ admits an effective action of the~group~$\Gm$, then $L_3$ contains a $\Gm$-fixed point $P\notin\Sing(X)$,
and there exists a $\Gm$-equivariant diagram
$$
\xymatrix@R=0.8em{
&\widehat{X}\ar@{->}[rd]^{\beta}\ar@{->}[dl]_{\alpha}&\\%
X&&Y}
$$
where $\alpha$ is the~blow up of the~point $P$, the~morphism $\beta$ is the~birational contraction of the~proper transform of the~line $L_3$,
and $Y$ is a singular del Pezzo surface of degree $2$ such that $\Type(X)=\type{A}_6$.
The~latter contradicts Corollary~\ref{corollary:d-2}, so that $X$ does not admit an effective action of the~group~$\Gm$.
Then $\Aut^0(X)$ is unipotent.
By Corollary~\ref{corollary:Ga}, the~surface $X$ does not admit an effective~$\Ga^2$-action.
Then $\dim \Aut^0(X)\leqslant 1$. On the~other hand, the~group $\Aut(X)$ contains transformations
$$
\big(y_1:y_2:y_3:y_3'\big)\longmapsto
\big(y_1:y_2:y_3+ay_1y_2:y_3'+2ay_3+a^2y_1y_2\big),
$$
where $a\in\Bbbk$. They generate a group isomorphic to $\Ga$. Then $\Aut^0(X)\cong\Ga$ by Corollary~\ref{corollary:connected-group-rational-surface}.

Let $X$ be the~surface~\ref{d=3:A5-A1}.
As in the~previous case, the~surface $X$ is a sextic hypersurface in the~weighted projective space $\PP(1,2,3,3)$.
But now the~surface $X$ is given by $y_3^2=y_2^3+y_3'y_1y_2$.
Observe that the~group $\Aut(X)$ contains transformations
$$
(y_1:y_2:y_3:y_3')\longmapsto\big(y_1:t^2y_2:t^3y_3+at^3y_1y_2:t^4y_3'+2at^4y_3+a^2t^4y_1y_2\big)
$$
for any $a\in\Bbbk$ and $t\in\Bbbk^\ast$.
They generate a group isomorphic to~$\BB_2$.
This implies that $\Aut^0(X)\cong\BB_2$,
because $X$ is not toric by Lemma~\ref{lemma:cubic:Ga-Gm}\ref{lemma:cubic:Ga-Gm:2},
and $X$ admits no effective~$\Ga^2$-action by Corollary~\ref{corollary:Ga}.

Finally, if $X$ is the~surface~\ref{d=3:E6}, then it follows from Theorem~\ref{theorem:index>1}
that $X$ is a hypersurface in $\PP(1,2,3,3)$ that is given by $y_3^2=y_2^3+y_3'y_1^3$.
Using this, one can show that the~group $\Aut^0(X)$ consists of transformations
\[
\big(y_1:y_2:y_3:y_3'\big)\longmapsto\big(y_1:t^2y_2:t^3y_3+ay_1^3:t^6y_3'+a^2y_1^3+2at^3y_3\big),
\]
where $a\in\Bbbk$ and $t\in\Bbbk^\ast$. Thus, in this case, we have $\Aut^0(X)\cong\Ga\rtimes_{(3)}\Gm$,
which also follows from Corollary~\ref{Cor:Cl=Z:Aut}.
\end{proof}

To complete the~proof of Main Theorem for Du Val cubic surfaces, we need

\begin{lemma}
\label{lemma:degree-3-Gm}
Let $X$ be a non-weakly minimal Du Val cubic surface such that $\Aut^0(X)$ is infinite.
Then $\Aut^0(X)\cong\Gm$ and $X$ is one of the~surfaces~\ref{d=3:D5}, \ref{d=3:A3-2A1} or~\ref{d=3:D4}.
\end{lemma}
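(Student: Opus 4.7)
Since $X$ is not weakly minimal, it contains a~line $L$ lying in its smooth locus. By~Lemma~\ref{extremal-contractions} and Corollary~\ref{corollary:extremal-contractionsDP}, the~contraction of $L$ is a~birational $(1,1)$-contraction $\psi\colon X\to Y$ onto a~Du Val del Pezzo surface $Y$ with $K_Y^2=4$ and $\Type(Y)=\Type(X)$, and $P:=\psi(L)$ is a~smooth point of~$Y$ that lies on no line of~$Y$; conversely, $X$ is the~blow-up of $Y$ at~$P$. Since the~set of lines on~$X$ is finite, every $\sigma\in\Aut^0(X)$ must preserve~$L$, so~$\psi$ is $\Aut^0(X)$-equivariant and there is a~natural identification
\[
\Aut^0(X)\;=\;\Aut^0(Y,P),
\]
the~identity component of the~stabilizer of~$P$ in~$\Aut^0(Y)$. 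In~particular $\Aut^0(Y)$ is infinite, so $Y$ must appear in the~list of twelve surfaces described in the~proof of Proposition~\ref{proposition:d=4-ind-aut}.

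The~remainder is a~case-by-case analysis: for each such~$Y$, using the~explicit equation of~$Y$ (from~Theorem~\ref{theorem:index>1} or from the~proof of Proposition~\ref{proposition:d=4-ind-aut}) and the~explicit description of $\Aut^0(Y)$ given there, I~would determine the~smooth points $P\in Y$ off the~lines whose stabilizer in $\Aut^0(Y)$ is positive-dimensional. For the~two toric quartics \ref{d=4:A2-2A1} and~\ref{d=4:4A1}, every smooth point off the~lines lies in the~free open $\Gm^2$-orbit and therefore has trivial stabilizer. For the~one-dimensional cases \ref{d=4:A3-4lines}, \ref{d=4:A3-5lines}, \ref{d=4:A2-A1}, \ref{d=4:3A1}, \ref{d=4:2A1-8lines}, a~direct computation in coordinates shows that every $\Aut^0(Y)$-fixed point is either singular or lies on a~line. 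For \ref{d=4:A3-A1} and~\ref{d=4:A4} with $\Aut^0(Y)\cong\BB_2$, the~loci of points with one-dimensional stabilizer (the~$\Ga$-fixed curve and the~$\Gm$-fixed locus) meet the~smooth locus only inside the~union of lines. The~remaining three cases \ref{d=4:D5} (with $\Aut^0\cong\Ga^2\rtimes\Gm$), \ref{d=4:A3-2A1} (with $\BB_2\times\Gm$), and~\ref{d=4:D4} (with $\Ga\rtimes_{(2)}\Gm$) each admit a~smooth point $P$ off the~lines whose stabilizer $\Aut^0(Y,P)$ is a~one-dimensional torus; blowing up such a~$P$ produces the~cubic surfaces \ref{d=3:D5}, \ref{d=3:A3-2A1}, \ref{d=3:D4}, respectively, with $\Aut^0(X)\cong\Gm$ in each case (the~resulting cubics are uniquely determined by their singularity type together with the~fact that $\Aut^0$ is infinite).

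The~main obstacle is the~stabilizer analysis in the~two-dimensional cases \ref{d=4:A3-A1}, \ref{d=4:A4}, \ref{d=4:D4}: since the~generic stabilizer is trivial, dimension counting is insufficient and one must explicitly identify every one-parameter subgroup fixed locus in~$Y$, and then verify that each such locus is contained in $\Sing(Y)$ or in the~union of lines, except for the~single $\Gm$-orbit on~\ref{d=4:D4} giving rise to the~cubic~\ref{d=3:D4}. This is a~direct but somewhat tedious computation in the~weighted-projective models of these surfaces, and it is the~most delicate step of the~argument.
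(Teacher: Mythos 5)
Your reduction to degree $4$ is sound and is also how the paper's proof begins: the contraction of the line $L$ is $\Aut^0(X)$-equivariant, $P=\psi(L)$ is a smooth point of $Y$ lying on no line, and $\Aut^0(X)$ is the identity component of the stabilizer of $P$ in $\Aut^0(Y)$; your predicted outcomes of the case analysis (only \ref{d=4:D5}, \ref{d=4:A3-2A1}, \ref{d=4:D4} contribute, each with stabilizer $\Gm$) are all correct. However, as written the argument is a plan rather than a proof: the entire content of the lemma, in your approach, is the stabilizer computation over the twelve quartics, and for nine or ten of them you only assert that ``a direct computation in coordinates shows'' the fixed loci lie in the lines, explicitly deferring what you yourself call ``the most delicate step''. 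Until those computations are done (in particular ruling out $\Gm$- or $\Ga$-fixed points off the lines on \ref{d=4:A4}, \ref{d=4:A3-A1}, \ref{d=4:A3-4lines}, \ref{d=4:A3-5lines}, \ref{d=4:A2-A1}, \ref{d=4:3A1}, \ref{d=4:2A1-8lines}), nothing is established.

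The paper's proof is organized precisely so as to avoid this. Two structural observations do the work. First, $L$ is a double section of the conic bundle $\psi\colon X\to\PP^1$ given by projection from $L$ (Lemma~\ref{lemma:conic-bundles}\ref{conic:bundle:3}); a $\Ga$-action would fix the two ramification points of the double cover $L\to\PP^1$, hence act trivially on $L$, hence on every fiber, hence on $X$. So $\Aut^0(X)$ has no unipotent subgroup, and since a non-weakly-minimal cubic is not toric (Lemma~\ref{lemma:cubic:Ga-Gm}\ref{lemma:cubic:Ga-Gm:2}), the conclusion $\Aut^0(X)\cong\Gm$ is obtained \emph{before} any case analysis; this kills at one stroke every case in your list where the stabilizer would have to be unipotent. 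Second, the list of possible $Y$ is cut down without computation: if $\uprho(X)=2$ the conic bundle is extremal and Lemma~\ref{extremal-contractions}\ref{extremal-contractions1} forces $\Type(X)\in\{\type{D}_5,\ \type{A}_32\type{A}_1\}$; if $\uprho(X)>2$ the only possible non-$\type{A}$ type is $\type{D}_4$, and the remaining case (all singularities of type $\type{A}_n$) is eliminated by a fixed-point count --- $\Aut^0(X)$ acts trivially on $L$ and preserves a second line $M$ disjoint from $L$, so a general fiber of $\psi$ carries at least three fixed points and the action would be trivial. If you wish to salvage your version economically, import the double-section argument as a first step: knowing $\Aut^0(X)\cong\Gm$ in advance reduces your task to locating $\Gm$-fixed points off the lines, which disposes of \ref{d=4:A3-4lines} and the two toric quartics for free and substantially shortens the remaining checks.
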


\begin{proof}
The surface $X$ contains a~line $L$ such that $L\subset X\setminus\Sing(X)$.
By Lemma~\ref{lemma:conic-bundles}\ref{conic:bundle:3} there is a~conic bundle $\psi\colon X\to\PP^1$ such that $L$ is its double section.
If $X$ admits an effective $\Ga$-action,
then the~group $\Ga$ fixes the~ramification points of the~double cover $L\to\PP^1$ induced by $\psi$,
so that the~group $\Ga$ acts trivially on~$L$, which implies that it also acts trivially on the~fibers of the~conic bundle $\psi$,
so that the~$\Ga$-action on $X$ is trivial, which is a~contradiction.
Hence, we~conclude~that the~group $\Aut^0(X)$ contains no unipotent subgroups.
Then $\Aut^0(X)$ must be a torus,
which implies that $\Aut^0(X)\cong\Gm$, because $X$ is not toric by Lemma\ref{lemma:cubic:Ga-Gm}\ref{lemma:cubic:Ga-Gm:2}.

Let $\psi^\prime\colon X\to X^\prime$ be the~contraction of the~line $L$.
Then $X^\prime$ is a quartic Du Val del Pezzo surface such that
$\uprho(X^\prime)=\uprho(X)-1$ and $\Type(X)=\Type(X^\prime)$.
Note that the~group $\Aut(X^\prime)$ is infinite,
and $\Aut^0(X)$ is the~stabilizer  in $\Aut^0(X^\prime)$ of the~point $\psi^\prime(L)$.
Let $U^\prime$ be the~complement in $X^\prime$ to the~union of all lines.
Then $\psi^\prime(L)\in U^\prime$ by Corollary~\ref{corollary:extremal-contractionsDP}.

Suppose that $\uprho(X)=2$. Then $\psi$ is an extremal contraction.
By Lemma~\ref{extremal-contractions}\ref{extremal-contractions1},
the~singular points of the~surface $X$ can be of types $\type{D}_4$, $\type{D}_5$, $\type{A}_3$, and $\type{A}_1$, where
$\type{A}_1$ appears even number~of~times.
We have two possibilities: $\Type(X)=\type{D}_5$ and $\type{A}_32\type{A}_1$,
so that $X^\prime$ is one of the~surfaces \ref{d=4:D5} or \ref{d=4:A3-2A1}.
In both cases, the~subset $U^\prime$ is the~open $\Aut^0(X^\prime)$-orbit (cf. Remark~\ref{remark:Cl1:A2}),
which immediately implies that $\Aut^0(X)\cong \Gm$ and $X$ is one of the~surface~\ref{d=3:D5} or \ref{d=3:A3-2A1}.

Now, we assume that $\uprho(X)>2$. If $\Type(X)=\type{D}_4$, then $X^\prime$ is the~surface \ref{d=4:D4}.
Arguing as above, we see that $\Aut^0(X)\cong \Gm$ and $X$ is the~surface~\ref{d=3:D4}.
Thus, to complete the~proof, we may assume that $\uprho(X)>2$ and all the~singularities of $X$ are of type $\type{A}_n$.

We claim that the~action of $\Aut^0(X)$ on  $L$ is trivial.
Indeed, suppose that this is not~the~case.
Let~us seek for a contradiction. Let~$P_1$ and $P_2$ be the~ramification points of the~double cover~$L\to\PP^1$,
let $F_1$ and $F_2$ be the~fibers of the~conic bundle $\psi$ passing through the~points $P_1$ and $P_2$,~respectively.
Then~$P_1$ and $P_2$ are fixed by $\Aut^0(X)$, and these are all $\Aut^0(X)$-fixed points on $L$,
so that all fibers of the~conic bundle $\psi$ other than $F_1$ and $F_2$ are smooth.
Since $\uprho(X)>2$, there exists at least one reducible fiber.
Thus, we may assume that $F_1$ is reducible. Then
$$
F_1=F^\prime_1+F^{\prime\prime}_1,
$$
where $F_1^\prime$ and $F_1^{\prime\prime}$ are lines.
Then $P_1=F_1^\prime\cap F_1^{\prime\prime}\cap L$ and the~surface $X$ is smooth along $F_1$,
which implies that $\Sing(X)\subset F_2$ and $F_2$ is irreducible (but multiple).
In particular, we have $\uprho(X)=3$.
On the~other hand, using Lemma~\ref{extremal-contractions}\ref{extremal-contractions1}, we see that either $\Type(X)=2\type{A}_1$ or $\Type(X)=\type{A}_3$.
This contradicts the~Noether formula.
Therefore, the~action of $\Aut^0(X)$ on the~line $L$ is trivial,
so~that the~action of the~group $\Aut^0(X)$ on the~base of the~conic bundle $\psi$ is trivial as well.

Let $M^\prime$ be a line in $X^\prime$ (it does exist since $\uprho(X^\prime)>1$),
and let $M$ be its proper transform on~$X$.
Then~$\psi^\prime(L)\not\in M^\prime$ by Corollary~\ref{corollary:extremal-contractionsDP},
so that $M$ is a line on $X$, which is disjoined from the~line~$L$.
Then $M$ is an $\Aut^0(X)$-invariant curve, which is not contained in the~fibers of the~conic bundle $\psi$,
since $\psi$ is given by the~projection from $L$.
Therefore, if $C$ is a general fiber of~$\psi$, then $C$~contains at least three $\Aut^0(X)$-fixed points $C\cap (L\cup M)$,
so that the~$\Aut^0(X)$-action on $C$ must be trivial.
This implies that the~action of $\Aut^0(X)$ on $X$ is also trivial, which is a contradiction.
\end{proof}

Combining Proposition~\ref{proposition:deg-3-ind-3} and Lemma \ref{lemma:degree-3-Gm}, we obtain

\begin{corollary}
\label{corollary:d-3}
Main Theorem holds for del Pezzo surfaces of degree $3$.
\end{corollary}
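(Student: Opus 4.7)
The plan is to derive Corollary~\ref{corollary:d-3} by combining the two results that have just been established, namely Proposition~\ref{proposition:deg-3-ind-3} for weakly minimal cubic surfaces and Lemma~\ref{lemma:degree-3-Gm} for the non-weakly minimal case. So the only real work is to check that together these two statements cover every Du Val cubic surface $X$ with $\Aut(X)$ infinite, and that the identifications they produce match the entries in Big Table.

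First I would take an arbitrary Du Val cubic surface $X\subset\PP^3$ with $\Aut(X)$ infinite and apply Definition~\ref{definition:weakly-minimal} to split into two cases. If $X$ is weakly minimal, Proposition~\ref{proposition:deg-3-ind-3} gives the answer directly: using Theorem~\ref{theorem:Fano-Weil-index-big} one gets $\ind(X)>1$ except for the surface \ref{d=3:A4-A1}, and then Theorem~\ref{theorem:index>1} lists the possibilities \ref{d=3:E6}, \ref{d=3:A5-A1}, \ref{d=3:A5}, \ref{d=3:3A2}, \ref{d=3:2A2-A1}, \ref{d=3:2A2}, for each of which $\Aut^0(X)$ is computed explicitly and matches Big Table.

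If $X$ is not weakly minimal, I would apply Lemma~\ref{lemma:degree-3-Gm}, which asserts that $\Aut^0(X)\cong\Gm$ and $X$ is one of \ref{d=3:D5}, \ref{d=3:A3-2A1}, \ref{d=3:D4}. Since the construction in that lemma uses a line $L\subset X\setminus\Sing(X)$ and the resulting contraction $X\to X'$ to a quartic Du Val del Pezzo, I would just note that the possible $X'$ are precisely those classified in Proposition~\ref{proposition:d=4-ind-aut} through the contents of degree $4$ of Big Table, and that the only $X'$ whose open $\Aut^0(X')$-orbit is nonempty in the complement of the lines (so that $X$ can have infinite automorphism group) yield the three listed cubics.

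There is essentially no obstacle here: the corollary is a bookkeeping statement gluing together Proposition~\ref{proposition:deg-3-ind-3} and Lemma~\ref{lemma:degree-3-Gm}. The only thing one has to verify is that the union of their conclusions exactly reproduces the degree-$3$ entries of Big Table with the stated connected automorphism groups, and that no Du Val cubic has been accidentally missed, i.e.\ that every $X$ with $\uprho(X)\geqslant 2$ and some line in the smooth locus genuinely falls under Lemma~\ref{lemma:degree-3-Gm} while the rest satisfies Definition~\ref{definition:weakly-minimal}. This is immediate from the definitions.
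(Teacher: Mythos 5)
Your proposal is correct and follows exactly the paper's argument: the paper derives Corollary~\ref{corollary:d-3} precisely by combining Proposition~\ref{proposition:deg-3-ind-3} (the weakly minimal case) with Lemma~\ref{lemma:degree-3-Gm} (the non-weakly minimal case), which together exhaust all Du Val cubics with infinite automorphism group. The additional bookkeeping you describe is exactly what those two statements already provide.
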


Thus
Main Theorem holds for Du Val del Pezzo surface of degrees $1$, $2$, $3$, $4$, $5$, $6$.
This follows from Proposition~\ref{prop:d-1} and Corollaries~ \ref{cor:high-degree}, \ref{corollary:d-2}, \ref{corollary:d-3}.
This completes the~proof of Main Theorem.

\newpage
\begin{landscape}%{adjustbox}{angle=90}
\section{Big Table}
\label{section:tables}

Let $X$ be a~Du Val del Pezzo surface such that $\Aut(X)$ is infinite.
Then the~type $\Type(X)$, the~degree $K_X^2$, the~Picard rank~$\uprho(X)$, the~number of lines~$\numl(X)$,
the~Fano--Weil index $\ind(X)$, the~group $\Aut^0(X)$, and the~equation of the~surface $X$ are given below.
The~column No indicates a~del Pezzo surface from which $X$ can be obtained by blowing up a smooth point that
does not lie on a line.

\renewcommand{\arraystretch}{1.13}
\begin{center}
\begin{longtable}{|c|c|c|c|c|c|c|c|p{0.5\textwidth}|c|}\hline
\label{table:main}
&$K_X^2$&$\uprho(X)$&$\numl(X)$&$\Type(X)$&$\ind(X)$&No&$\Aut^0(X)$&\multicolumn2{c|}{equation \& total space}
\\
\hhline{|=|=|=|=|=|=|=|=|=|=|}
\endhead\hline
&$K_X^2$&$\uprho(X)$&$\numl(X)$&$\Type(X)$&$\ind(X)$&No&$\Aut^0(X)$&\multicolumn2{c|}{equation \& total space}
\\
\hhline{|=|=|=|=|=|=|=|=|=|=|}
\endfirsthead
\hline
\no\label{d=1:E8}
&$1$&$1$&$1$&$\type{E}_8$&$1$&--&$\Gm$&\centering$y_3^2=y_2^3+y_1'y_1^5$&$\PP(1,1,2,3)$
\\
\hline
\no\label{d=1:E7-A1}
&$1$&$1$&$3$&$\type{E}_{7}\type{A}_1$&$1$&--&$\Gm$&\centering$y_3^2=y_1^3y_1'y_2+y_2^3$&$\PP(1,1,2,3)$
\\
\hline
\no\label{d=1:E6-A2}
&$1$&$1$&$4$&$\type{E}_6\type{A}_2$&$1$&--&$\Gm$&\centering$y_3^2=y_2^3+y_1'^2y_1^4$&$\PP(1,1,2,3)$
\\
\hline
\no\label{d=1:2D4}
&$1$&$1$&$5$&$2\type{D}_4$&$1$&--&$\Gm$&\centering$y_3^2=y_2(y_2+y_1y_1')(y_2+\lambda y_1y_1')$ for $\lambda\in\Bbbk\setminus\{0,1\}$
&$\PP(1,1,2,3)$
\\
\hline
\no\label{d=2:E7}
&$2$&$1$&$1$&$\type{E}_7$&$2$&--&$\Gm$&\centering$y_2^2=y_1(y_1^2y_1''+y_1'^3)$&$\PP(1,1,1,2)$
\\
\hline
\no\label{d=2:D6-A1}
&$2$&$1$&$2$&$\type{D}_{6}\type{A}_1$&$2$&--&$\Gm$&\centering$y_2^2=y_1y_1'(y_1y_1''+y_1'^2)$&$\PP(1,1,1,2)$
\\
\hline
\no\label{d=2:A7}
&$2$&$1$&$2$&$\type{A}_7$&$1$&--&$\Ga$&\centering$y_2^2=(y_1y_1''+y_1'^2)^2-y_1^4$&$\PP(1,1,1,2)$
\\
\hline
\no\label{d=2:A5-A2}
&$2$&$1$&$3$&$\type{A}_5\type{A}_2$&$2$&--&$\Gm$&\centering$y_2^2=y_1''(y_1^2y_1''+y_1'^3)$&$\PP(1,1,1,2)$
\\
\hline
\no\label{d=2:D4-3A1}
&$2$&$1$&$4$&$\type{D}_43\type{A}_1$&$2$&--&$\Gm$&\centering$y_2^2=y_1y_1'y_1''(y_1'+y_1'')$&$\PP(1,1,1,2)$
\\
\hline
\no\label{d=2:2A3-A1}
&$2$&$1$&$4$&$2\type{A}_3\type{A}_1$&$2$&--&$\Gm$&\centering$y_2^2=y_1y_1'(y_1y_1'+y_1''^2)$&$\PP(1,1,1,2)$
\\
\hline
\no\label{d=2:E6}
&$2$&$2$&$4$&$\type{E}_6$&$1$&\ref{d=3:E6}&$\Gm$&\centering$y_2^2=y_1^3y_1''+y_1'^4$&$\PP(1,1,1,2)$
\\
\hline
\no\label{d=2:D5-A1}
&$2$&$2$&$5$&$\type{D}_5\type{A}_1$&$2$&--&$\Gm$&\centering$y_2^2=y_1'(y_1^2y_1''+y_1'^3)$&$\PP(1,1,1,2)$
\\
\hline
\no\label{d=2:2A3}
&$2$&$2$&$6$&$2\type{A}_3$&$1$&--&$\Gm$&\centering$y_2^2=(y_1''^2+y_1y_1')(y_1''^2+\lambda y_1y_1')$ for $\lambda\in\Bbbk\setminus\{0,1\}$
&$\PP(1,1,1,2)$
\\
\hline
\no\label{d=3:E6}
&$3$&$1$&$1$&$\type{E}_6$&$3$&--&$\Ga\rtimes_{(3)}\Gm$&\centering$x_0x_2^2=x_1^3+x_3x_0^2$&$\PP^3$
\\
\hline
\no\label{d=3:A5-A1}
&$3$&$1$&$2$&$\type{A}_5\type{A}_1$&$3$&--&$\BB_2$&\centering$x_0x_2^2=x_1^3+x_0x_3x_1$&$\PP^3$
\\
\hline
\no\label{d=3:3A2}
&$3$&$1$&$3$&$3\type{A}_2$&$3$&--&$\Gm^2$&\centering$x_0x_2x_3=x_1^3$&$\PP^3$
\\
\hline
\no\label{d=3:D5}
&$3$&$2$&$3$&$\type{D}_5$&$1$&\ref{d=4:D5}&$\Gm$&\centering$x_0^2x_3=x_2(x_0x_2-x_1^2)$&$\PP^3$
\\
\hline
\no\label{d=3:A5}
&$3$&$2$&$3$&$\type{A}_5$&$3$&--&$\Ga$&\centering$x_0x_2^2=x_1^3+x_0^3+x_0x_3x_1$&$\PP^3$
\\
\hline
\no\label{d=3:A4-A1}
&$3$&$2$&$4$&$\type{A}_4\type{A}_1$&$1$&--&$\Gm$&\centering$x_3(x_0x_2-x_1^2)=x_0^2x_1$&$\PP^3$
\\
\hline
\no\label{d=3:A3-2A1}
&$3$&$2$&$5$&$\type{A}_32\type{A}_1$&$1$&\ref{d=4:A3-2A1}&$\Gm$&\centering$x_3(x_0x_2-x_1^2)=x_0x_1^2$&$\PP^3$
\\
\hline
\no\label{d=3:2A2-A1}
&$3$&$2$&$5$&$2\type{A}_2\type{A}_1$&$3$&--&$\Gm$&\centering$x_0x_2x_3=x_1^3+x_0x_1^2$&$\PP^3$
\\
\hline
\no\label{d=3:D4}
&$3$&$3$&$6$&$\type{D}_4$&$1$&\ref{d=4:D4}&$\Gm$&\centering$x_0^2x_3=x_1x_2(x_1+x_2)$&$\PP^3$
\\
\hline
\no\label{d=3:2A2}
&$3$&$3$&$7$&$2\type{A}_2$&$3$&--&$\Gm$&\centering$x_0x_2x_3=x_1(x_1-x_0)(x_1-\lambda x_0)$ for $\lambda\in\Bbbk\setminus\{0,1\}$&$\PP^3$
\\
\hline
\no\label{d=4:D5}
&$4$&$1$&$1$&$\type{D}_5$&$4$&--&$\Ga^2\rtimes\Gm$&\centering$y_3^2=y_2^3+y_1^2y_4$&$\PP(1,2,3,4)$
\\
\hline
\no\label{d=4:A3-2A1}
&$4$&$1$&$2$&$\type{A}_32\type{A}_1$&$4$&--&$\BB_2\times\Gm$&\centering$y_3^2=y_2y_4$&$\PP(1,2,3,4)$
\\
\hline
\no\label{d=4:D4}
&$4$&$2$&$2$&$\type{D}_4$&$2$&--&$\Ga\rtimes_{(2)}\Gm$&\centering$y_2^2=y_2'y_1^2+y_1'^4$&$\PP(1,1,2,2)$
\\
\hline
\no\label{d=4:A4}
&$4$&$2$&$3$&$\type{A}_4$&$1$&\ref{d=5:A4}&$\BB_2$&\centering$x_0x_1-x_2x_3=x_0x_4+x_1x_2+x_3^2=0$&$\PP^4$
\\
\hline
\no\label{d=4:A3-A1}
&$4$&$2$&$3$&$\type{A}_3\type{A}_1$&$4$&--&$\BB_2$&\centering$y_3^2=y_1^6+y_2y_4$&$\PP(1,2,3,4)$
\\
\hline
\no\label{d=4:A2-2A1}
&$4$&$2$&$4$&$\type{A}_22\type{A}_1$&$2$&--&$\Gm^2$&\centering$y_2y_2'=y_1^3y_1'$&$\PP(1,1,2,2)$
\\
\hline
\no\label{d=4:4A1}
&$4$&$2$&$4$&$4\type{A}_1$&$2$&--&$\Gm^2$&\centering$y_2y_2'=y_1^2y_1'^2$&$\PP(1,1,2,2)$
\\
\hline
\no\label{d=4:A3-4lines}
&$4$&$3$&$4$&$\type{A}_3$&$2$&--&$\Ga$&\centering$y_2^2=y_2'y_1y_1'+y_1^4+y_1'^4$&$\PP(1,1,2,2)$
\\
\hline
\no\label{d=4:A3-5lines}
&$4$&$3$&$5$&$\type{A}_3$&$1$&\ref{d=5:A3}&$\Gm$&\centering$x_0x_1-x_2x_3=x_0x_3+x_2x_4+x_1x_3=0$&$\PP^4$
\\
\hline
\no\label{d=4:A2-A1}
&$4$&$3$&$6$&$\type{A}_2\type{A}_1$&$1$&\ref{d=5:A2-A1}&$\Gm$&\centering$x_0x_1-x_2x_3=x_1x_2+x_2x_4+x_3x_4=0$&$\PP^4$
\\
\hline
\no\label{d=4:3A1}
&$4$&$3$&$6$&$3\type{A}_1$&$2$&--&$\Gm$&\centering$y_2y_2'=y_1^2y_1'(y_1'+y_1)$&$\PP(1,1,2,2)$
\\
\hline
\no\label{d=4:2A1-8lines}
&$4$&$4$&$8$&$2\type{A}_1$&$2$&--&$\Gm$&\centering$y_2y_2'=y_1y_1'(y_1'-y_1)(y_1'-\lambda y_1)$
for $\lambda\in\Bbbk\setminus\{0,1\}$&$\PP(1,1,2,2)$
\\
\hline
\no\label{d=5:A4}
&$5$&$1$&$1$&$\type{A}_4$&$5$&--&$\UU_3\rtimes\Gm$&\centering$y_3^2+y_2^3+y_1y_5=0$&$\PP(1,2,3,5)$
\\
\hline
\no\label{d=5:A3}
&$5$&$2$&$2$&$\type{A}_3$&$1$&--&$\Ga^2\rtimes\Gm$&\centering$u_2^2v_0+(u_0^2+u_1u_2)v_1=0$&$\PP^2\times\PP^1$
\\
\hline
\no\label{d=5:A2-A1}
&$5$&$2$&$3$&$\type{A}_2\type{A}_1$&$1$&\ref{d=6:A2-A1}&$\BB_2\times\Gm$&&
\\
\hline
\no\label{d=5:A2}
&$5$&$3$&$4$&$\type{A}_2$&$1$&\ref{d=6:A2}&$\BB_2$&\centering$u_0u_1v_0+(u_1^2+u_0u_2)v_1=0$&$\PP^2\times\PP^1$
\\
\hline
\no\label{d=5:2A1}
&$5$&$3$&$5$&$2\type{A}_1$&$1$&\ref{d=6:2A1}&$\Gm^2$&\centering$u_0^2v_0+u_1u_2v_1=0$&$\PP^2\times\PP^1$
\\
\hline
\no\label{d=5:A1}
&$5$&$4$&$7$&$\type{A}_1$&$1$&\ref{d=6:A1-3l},\ref{d=6:A1-4l}&$\Gm$&\centering$u_0u_1v_0+(u_0+u_1)u_2v_1=0$&$\PP^2\times\PP^1$
\\
\hline
\no\label{d=6:A2-A1}
&$6$&$1$&$1$&$\type{A}_2\type{A}_1$&$6$&--&$\BB_3$&\centering---&$\PP(1,2,3)$
\\
\hline
\no\label{d=6:A2}
&$6$&$2$&$2$&$\type{A}_2$&$3$&--&$\UU_3\rtimes\Gm$&\centering$y_1y_3=y_2^2+y_1'^4$&$\PP(1,1,2,3)$
\\
\hline
\no\label{d=6:2A1}
&$6$&$2$&$2$&$2\type{A}_1$&$2$&--&$\BB_2\times\BB_2$&\centering$y_1y_2=y_1'^2y_1''$&$\PP(1,1,1,2)$
\\
\hline
\no\label{d=6:A1-3l}
&$6$&$3$&$3$&$\type{A}_1$&$2$&--&$\Ga^2\rtimes\Gm$&\centering$y_1y_2=y_1'y_1''(y_1'+y_1'')$&$\PP(1,1,1,2)$
\\
\hline
\no\label{d=6:A1-4l}
&$6$&$3$&$4$&$\type{A}_1$&$1$&\ref{d=7}&$\BB_2\times\Gm$&\centering$u_0v_0+u_1v_1+u_2v_2=0$, $u_0v_1+u_1v_2=0$&$\PP^2\times\PP^2$
\\
\hline
\no\label{d=6:smooth}
&$6$&$4$&$ 6$&smooth&$1$&\ref{d=7:smooth}&$\Gm^2$&\centering$u_0v_0w_0=u_1v_1w_1$&$\PP^1\times\PP^1\times\PP^1$
\\
\hline
\no\label{d=7}
&$7$&$2$&$2$&$\type{A}_1$&$1$&\ref{d=8}&$\BB_3$&&
\\
\hline
\no\label{d=7:smooth}
&$7$&$3$&$3$&smooth&$1$&\ref{d=8:F1},\ref{d=8:P1-P1}&$\BB_2\times\BB_2$&\centering & \\
\hline
\no\label{d=8}
&$8$&$1$&$0$&$\type{A}_1$&$4$&--&$\Ga^3\rtimes(\GL_2(\Bbbk)/\mumu_2)$&\centering---&$\PP(1,1,2)$
\\
\hline
\no\label{d=8:F1}\centering
&$8$&$2$&$1$&smooth&$1$&\ref{d=9:P2}&$\Ga^2\rtimes\GL_2(\Bbbk)$&\centering $u_0v_0=u_1v_1$&$\PP^2\times\PP^1$
\\
\hline
\no\label{d=8:P1-P1}
&$8$&$2$&$0$&smooth&$2$&--&$\PGL_2(\Bbbk)\times\PGL_2(\Bbbk)$&\centering---&$\PP^1\times\PP^1$
\\
\hline
\no\label{d=9:P2}
&$9$&$1$&$0$&smooth&$3$&--&$\PGL_3(\Bbbk)$&\centering---&$\PP^2$
\\
\hline
\end{longtable}
\end{center}
\end{landscape}%{adjustbox}

\newpage
\appendix

\section{Lines and dual graphs}
\label{section:appendix}

In this appendix, we present equations of the~lines on del Pezzo surfaces that appear in Big Table.
We also present the~dual graphs of all the~curves with negative self-intersection numbers on their minimal~resolutions.
As~in~the~paper \cite{Coray1988}, we will denote a~$(-1)$-curve by $\bullet$, and we will denote a~$(-2)$-curve by $\circ$.
We~will exclude surfaces~\ref{d=7} (see Example~\ref{example:dP7-A1}), \ref{d=7:smooth}, \ref{d=8}, \ref{d=8:F1}, \ref{d=8:P1-P1}, \ref{d=9:P2}.

Let $X$ be a~del Pezzo surface of degree $d$ in Big Table.
Take the~equation of $X$ from Big~Table.
The lines on $X$ and the~dual graphs of all the~curves with negative self-intersection numbers on its minimal resolution can be described as follows.

\begin{itemize}
\item[(\ref{d=1:E8})]
One has $d=1$ and $\Type(X)=\type{E}_8$.
The~dual graph is
$$
\xymatrix@R=0.8em{
&&\circ\ar@{-}[d]&&&&&\\
\circ\ar@{-}[r]&\circ\ar@{-}[r]&\circ\ar@{-}[r]&\circ\ar@{-}[r]&\circ&\circ\ar@{-}[l]&\circ\ar@{-}[l]&\bullet\ar@{-}[l]}
$$
The surface $X$ is weakly-minimal. The only line on $X$ is $x_3^2-x_2^3=x_0=0$.

\item[(\ref{d=1:E7-A1})]
One has $d=1$ and $\Type(X)=\type{E}_7\type{A}_1$.
The~dual graph is
$$
\xymatrix@R=0.8em{
&&\circ\ar@{-}[d]&&&\\
\circ\ar@{-}[r]&\circ\ar@{-}[r]&\circ\ar@{-}[r]&\circ\ar@{-}[r]&\circ&\circ\ar@{-}[l]\\
\bullet\ar@{-}[u]\ar@{-}[rr]&&\bullet\ar@{=}[rr]&&\circ\ar@{-}[r]&\bullet\ar@{-}[u]}
$$
The surface $X$ is weakly-minimal. The lines are cut out by $x_0=0$, $x_1=0$ and $x_3=x_2=0$.

\item[(\ref{d=1:E6-A2})]
One has $d=1$ and $\Type(X)=\type{E}_6\type{A}_2$.
The~dual graph is
$$
\xymatrix@R=0.6em{
&\circ\ar@{-}[dr]\ar@{-}[rr]\ar@{-}[dddl]&&\circ\ar@{-}[dl]\ar@{-}[dddr]&\\
&&\bullet\ar@{-}[d]&&\\
&&\bullet\ar@{-}[d]&&\\
\bullet\ar@{-}[d]&&\circ\ar@{-}[d]&&\bullet\ar@{-}[d]\\
\circ\ar@{-}[r]&\circ\ar@{-}[r]&\circ\ar@{-}[r]&\circ\ar@{-}[r]&\circ}
$$
The surface $X$ is weakly-minimal. The lines are cut out by $x_0=0$, $x_1=0$ and $x_2=0$.

\item[(\ref{d=1:2D4})]
One has $d=1$ and $\Type(X)=2\type{D}_4$. The~dual graph is
$$
\xymatrix@R=0.8em{
\circ&&\bullet\ar@{-}[ll]\ar@{-}[rr]&&\circ\\
\circ\ar@{-}[u]&\circ\ar@{-}[l]&\bullet\ar@{-}[r]\ar@{-}[l]&\circ\ar@{-}[r]&\circ\ar@{-}[u]\\
&\bullet\ar@{-}[rr]\ar@{-}[ul]&&\bullet\ar@{-}[ur]&\\
\circ\ar@{-}[uu]&&\bullet\ar@{-}[ll]\ar@{-}[rr]&&\circ\ar@{-}[uu]}
$$
The surface $X$ is weakly-minimal. The lines are cut out by $x_0=0$, $x_1=0$ and $x_3=0$.

\item[(\ref{d=2:E7})]
One has $d=2$ and $\Type(X)=\type{E}_{7}$.
The~dual graph is
$$
\xymatrix@R=0.8em{
\circ\ar@{-}[r]&\circ\ar@{-}[r]&\circ\ar@{-}[r]&\circ\ar@{-}[r]&\circ\ar@{-}[r]&\circ&\bullet\ar@{-}[l]\\
&&\circ\ar@{-}[u]&&&&}
$$
The surface $X$ is weakly-minimal, and the~line is $x_0=x_3=0$.

\item[(\ref{d=2:D6-A1})]
One has $d=2$ and $\Type(X)=\type{D}_{6}\type{A}_1$.
The~dual graph is
$$
\xymatrix@R=0.8em{
&&\circ\ar@{-}[d]&&&&&\\
\bullet&\circ\ar@{-}[r]\ar@{-}[l]&\circ\ar@{-}[r]&\circ\ar@{-}[r]&\circ\ar@{-}[r]&\circ&\bullet\ar@{-}[l]\ar@{-}[r]&\circ}
$$
The surface $X$ is weakly-minimal. The lines are $x_0=x_3=0$ and $x_1=x_3=0$.

\item[(\ref{d=2:A7})]
One has $d=2$ and $\Type(X)=\type{A}_7$.
The~dual graph is
$$
\xymatrix@R=0.8em{
\circ\ar@{-}[r]&\circ\ar@{-}[r]&\circ\ar@{-}[r]&\circ\ar@{-}[r]&\circ\ar@{-}[r]&\circ\ar@{-}[r]&\circ&\\
&\bullet\ar@{-}[u]&&&&\bullet\ar@{-}[u]&&}
$$
The surface $X$ is weakly-minimal, and the~lines are $x_1=x_3\pm x_0^2=0$.

\item[(\ref{d=2:A5-A2})]
One has $d=2$ and $\Type(X)=\type{A}_5\type{A}_2$.
The~dual graph is
$$
\xymatrix@R=0.8em{
\circ\ar@{-}[r]&\circ\ar@{-}[r]&\circ\ar@{-}[r]&\circ\ar@{-}[r]&\circ\\
\bullet\ar@{-}[u]&&\bullet\ar@{-}[u]&&\bullet\ar@{-}[u]\\
&\circ\ar@{-}[rr]\ar@{-}[ul]&&\circ\ar@{-}[ur]&}
$$
The surface $X$ is weakly-minimal. The lines are $x_1=x_3\pm x_0x_2=0$ and $x_2=x_3=0$.

\item[(\ref{d=2:D4-3A1})]
One has $d=2$ and $\Type(X)=\type{D}_43\type{A}_1$.
The~dual graph is
$$
\xymatrix@R=0.8em{
\circ\ar@{-}[rr]\ar@{-}[d]&&\bullet\ar@{-}[r]&\circ\ar@{-}[rd]&\\
\circ\ar@{-}[r]&\circ\ar@{-}[r]&\bullet\ar@{-}[r]&\circ\ar@{-}[r]&\bullet\\
\circ\ar@{-}[rr]\ar@{-}[u]&&\bullet\ar@{-}[r]&\circ\ar@{-}[ur]&}
$$
The surface $X$ is weakly-minimal. The lines are cut out by $x_3=0$.

\item[(\ref{d=2:2A3-A1})]
One has $d=2$ and $\Type(X)=2\type{A}_3\type{A}_1$.
The~dual graph is
$$
\xymatrix@R=0.8em{
\circ&&\bullet\ar@{-}[ll]\ar@{-}[rr]&&\circ\\
\circ\ar@{-}[u]&\bullet\ar@{-}[l]&\circ\ar@{-}[r]\ar@{-}[l]&\bullet\ar@{-}[r]&\circ\ar@{-}[u]\\
\circ\ar@{-}[u]&&\bullet\ar@{-}[ll]\ar@{-}[rr]&&\circ\ar@{-}[u]}
$$
This surface is weakly-minimal. The lines are $x_0=x_3=0$, $x_1=x_3=0$, $x_2=x_3\pm x_0x_1=0$.

\item[(\ref{d=2:E6})]
One has $d=2$ and $\Type(X)=\type{E}_6$.
The~dual graph is
$$
\xymatrix@R=0.8em{
\circ\ar@{-}[r]&\circ\ar@{-}[r]&\circ\ar@{-}[r]&\circ\ar@{-}[r]&\circ\\
\bullet\ar@{-}[u]&&\circ\ar@{-}[u]&&\bullet\ar@{-}[u]\\
&\bullet\ar@{-}[rr]\ar@{-}[ul]&&\bullet\ar@{-}[ur]&}
$$
This surface is not weakly-minimal. The lines are $x_0=x_3\pm x_1^2=0$ and $x_2=x_3\pm x_1^2=0$.

\item[(\ref{d=2:D5-A1})]
One has $d=2$ and $\Type(X)=\type{D}_5\type{A}_1$.
The~dual graph is
$$
\xymatrix@R=0.8em{
\circ\ar@{-}[rr]\ar@{-}[d]&&\bullet\ar@{-}[rr]&&\bullet\ar@{-}[d]\\
\circ\ar@{-}[r]&\circ\ar@{-}[r]&\circ\ar@{-}[r]&\bullet\ar@{-}[r]&\circ\\
\circ\ar@{-}[rr]\ar@{-}[u]&&\bullet\ar@{-}[rr]&&\bullet\ar@{-}[u]}
$$
The surface $X$ is weakly-minimal. The lines are cut out by $x_0=0$, $x_2=0$ and $x_1=x_3=0$.

\item[(\ref{d=2:2A3})]
One has $d=2$ and $\Type(X)=2\type{A}_3$.
The~dual graph is
$$
\xymatrix@R=0.8em{
\circ\ar@{-}[rr]&&\circ\ar@{-}[rr]&&\circ\\
&\bullet\ar@{-}[ur]\ar@{-}[d]&&\bullet\ar@{-}[ul]\ar@{-}[d]&\\
\bullet\ar@{-}[d]\ar@{-}[uu]&\bullet\ar@{-}[dr]&&\bullet\ar@{-}[dl]&\bullet\ar@{-}[uu]\ar@{-}[d]\\
\circ\ar@{-}[rr]&&\circ\ar@{-}[rr]&&\circ}
$$
The surface $X$ is weakly-minimal. The lines are cut out by $x_0=0$, $x_1=0$ and $x_2=0$.

\item[(\ref{d=3:E6})]
One has $d=3$ and $\Type(X)=\type{E}_6$.
The~dual graph is
$$
\xymatrix@R=0.8em{
&&\circ\ar@{-}[d]&&&\\
\circ\ar@{-}[r]&\circ\ar@{-}[r]&\circ\ar@{-}[r]&\circ\ar@{-}[r]&\circ\ar@{-}[r]&\bullet}
$$
The surface $X$ is weakly-minimal. The line is $x_0=x_1=0$.

\item[(\ref{d=3:A5-A1})]
One has $d=3$ and $\Type(X)=\type{A}_5\type{A}_1$.
The~dual graph is
$$
\xymatrix@R=0.8em{
\circ\ar@{-}[r]&\circ\ar@{-}[r]&\circ\ar@{-}[r]&\circ\ar@{-}[r]&\circ\ar@{-}[r]&\bullet&\circ\ar@{-}[l]\\
&\bullet\ar@{-}[u]&&&&&}
$$
The surface $X$ is weakly-minimal. The lines are $x_0=x_1=0$ and $x_1=x_2=0$.

\item[(\ref{d=3:3A2})]
One has $d=3$ and $\Type(X)=3\type{A}_2$.
The~dual graph is
$$
\xymatrix@R=0.8em{
\circ\ar@{-}[d]\ar@{-}[r]&\circ\ar@{-}[r]&\bullet\ar@{-}[r]&\circ\ar@{-}[r]&\circ\\
\bullet\ar@{-}[r]&\circ\ar@{-}[rr]&&\circ\ar@{-}[r]&\bullet\ar@{-}[u]}
$$
The surface $X$ is weakly-minimal. The lines are $x_0=x_3=0$, $x_1=x_3=0$ and $x_2=x_3=0$.

\item[(\ref{d=3:D5})]
One has $d=3$ and $\Type(X)=\type{D}_5$.
The~dual graph is
$$
\xymatrix@R=0.8em{
\bullet\ar@{-}[r]&\bullet\ar@{-}[r]&\circ\ar@{-}[r]&\circ\ar@{-}[r]&\circ\ar@{-}[r]&\circ\ar@{-}[r]&\circ\ar@{-}[r]&\bullet\\
&&&&&\circ\ar@{-}[u]&&}
$$
The surface $X$ is weakly-minimal. The lines are $x_0=x_1=0$, $x_0=x_2=0$ and $x_2=x_3=0$.

\item[(\ref{d=3:A5})]
One has $d=3$ and $\Type(X)=\type{A}_5$.
The~dual graph is
$$
\xymatrix@R=0.8em{
\circ\ar@{-}[r]&\circ\ar@{-}[r]&\circ\ar@{-}[r]&\circ\ar@{-}[r]&\circ\ar@{-}[r]&\bullet\\
&\bullet\ar@{-}[u]&&&\bullet\ar@{-}[u]&}
$$
The surface $X$ is weakly-minimal. The lines are cut out by $x_1=0$.

\item[(\ref{d=3:A4-A1})]
One has $d=3$ and $\Type(X)=\type{A}_4\type{A}_1$.
The~dual graph is
$$
\xymatrix@R=0.8em{
\bullet\ar@{-}[r]\ar@{-}[d]&\circ\ar@{-}[r]&\bullet\ar@{-}[r]&\bullet\ar@{-}[d]
\\
\circ\ar@{-}[r]&\circ\ar@{-}[r]&\circ\ar@{-}[r]&\circ
\\
&&\bullet\ar@{-}[u]&}
$$
The surface $X$ is weakly-minimal. The lines are cut out by $x_0=0$ and $x_1=0$.

\item[(\ref{d=3:A3-2A1})]
One has $d=3$ and $\Type(X)=\type{A}_32\type{A}_1$.
The~dual graph is
$$
\xymatrix@R=0.8em{
\circ\ar@{-}[r]\ar@{-}[d]&\bullet\ar@{-}[r]&\circ\ar@{-}[rd]&
\\
\circ\ar@{-}[r]&\bullet\ar@{-}[r]&\bullet\ar@{-}[r]&\bullet
\\
\circ\ar@{-}[r]\ar@{-}[u]&\bullet\ar@{-}[r]&\circ\ar@{-}[ur]&}
$$
Then $X$ is not weakly-minimal. The lines are cut out by $x_0=0$, $x_1=0$ and $x_2=0$.

\item[(\ref{d=3:2A2-A1})]
One has $d=3$ and $\Type(X)=2\type{A}_2\type{A}_1$.
The~dual graph is
$$
\xymatrix@R=0.8em{
\circ&&\bullet\ar@{-}[ll]\ar@{-}[rr]&&\circ\\
\circ\ar@{-}[u]&\bullet\ar@{-}[l]&\circ\ar@{-}[r]\ar@{-}[l]&\bullet\ar@{-}[r]&\circ\ar@{-}[u]\\
&\bullet\ar@{-}[ul]\ar@{-}[rr]&&\bullet\ar@{-}[ur]&}
$$
The surface $X$ is weakly-minimal. The lines are cut out by $x_1=0$, $x_2=0$ and $x_3=0$.

\item[(\ref{d=3:D4})]
One has $d=3$ and $\Type(X)=\type{D}_4$.
The~dual graph is
$$
\xymatrix@R=0.8em{
\bullet\ar@{-}[rr]\ar@{-}[dr]&&\bullet\ar@{-}[r]&\circ\ar@{-}[dr]&\\
&\bullet\ar@{-}[r]&\bullet\ar@{-}[r]&\circ\ar@{-}[r]&\circ\\
\bullet\ar@{-}[rr]\ar@{-}[ur]\ar@{-}[uu]&&\bullet\ar@{-}[r]&\circ\ar@{-}[ur]&}
$$
Then $X$ is not weakly-minimal. The lines on $X$ are cut by $x_0=0$ and $x_3=0$.

\item[(\ref{d=3:2A2})]
One has $d=3$ and $\Type(X)=2\type{A}_2$.
The~dual graph is
$$
\xymatrix@R=0.8em{
&\circ\ar@{-}[rr]&&\circ\\
\bullet\ar@{-}[ur]\ar@{-}[d]&\bullet\ar@{-}[u]&\bullet\ar@{-}[ul]\ar@{-}[d]&\\
\bullet\ar@{-}[dr]&\bullet\ar@{-}[u]&\bullet\ar@{-}[dl]&\bullet\ar@{-}[uu]\ar@{-}[d]\\
&\circ\ar@{-}[rr]\ar@{-}[u]&&\circ}
$$
The surface $X$ is weakly-minimal. The lines are cut out by $x_1=0$, $x_2=0$ and $x_3=0$.

\item[(\ref{d=4:D5})]
One has $d=4$ and $\Type(X)=\type{D}_5$.
The~dual graph is
$$
\xymatrix@R=0.8em{
&&\circ\ar@{-}[d]&&\\
\circ\ar@{-}[r]&\circ\ar@{-}[r]&\circ\ar@{-}[r]&\circ\ar@{-}[r]&\bullet}
$$
The surface $X$ is weakly-minimal. The line is $x_2^2-x_1^3=x_0=0$.

\item[(\ref{d=4:A3-2A1})]
One has $d=4$ and $\Type(X)=\type{A}_32\type{A}_1$.
The~dual graph is
$$
\xymatrix{
\circ\ar@{-}[r]&\bullet\ar@{-}[r]&\circ\ar@{-}[r]&\circ\ar@{-}[r]&\circ\ar@{-}[r]&\bullet\ar@{-}[r]&\circ}
$$
The surface $X$ is weakly-minimal. The lines are \mbox{$x_1=x_2=0$} and \mbox{$x_2^2-x_1^3=x_0=0$}.

\item[(\ref{d=4:D4})]
One has $d=4$ and $\Type(X)=\type{D}_4$.
The~dual graph is
$$
\xymatrix@R=0.8em{
&&\circ\ar@{-}[d]&&\\
\bullet\ar@{-}[r]&\circ\ar@{-}[r]&\circ\ar@{-}[r]&\circ\ar@{-}[r]&\bullet}
$$
The surface $X$ is weakly-minimal, and the~lines are $x_2\pm x_1^2=x_0=0$.

\item[(\ref{d=4:A4})]
One has $d=4$ and $\Type(X)=\type{A}_4$.
The~dual graph is
$$
\xymatrix@R=0.8em{
&\bullet\ar@{-}[d]&&&\\
\circ\ar@{-}[r]&\circ\ar@{-}[r]&\circ\ar@{-}[r]&\circ\ar@{-}[r]&\bullet\ar@{-}[r]&\bullet}
$$
Then $X$ is not weakly-minimal. The lines are cut out by $x_0=0$ and $x_1=0$.

\item[(\ref{d=4:A3-A1})]
One has $d=4$ and $\Type(X)=\type{A}_3\type{A}_1$.
The~dual graph is
$$
\xymatrix@R=0.8em@C=37pt{
\bullet\ar@{-}[dr]&&&&\\
&\circ\ar@{-}[r]&\circ\ar@{-}[r]&\circ\ar@{-}[r]&\bullet\ar@{-}[r]&\circ\\
\bullet\ar@{-}[ru]&&&&}
$$
The surface $X$ is weakly-minimal. The lines are \mbox{$x_2^2-x_1x_3=x_0=0$} and \mbox{$x_1=x_2\pm x_0^3=0$}.

\item[(\ref{d=4:A2-2A1})]
One has $d=4$ and $\Type(X)=\type{A}_22\type{A}_1$.
The~dual graph is
$$
\xymatrix@R=0.8em{
\circ\ar@{-}[r]\ar@{-}[d]&\bullet\ar@{-}[r]&\circ\ar@{-}[r]&\bullet\\
\circ\ar@{-}[r]&\bullet\ar@{-}[r]&\circ\ar@{-}[r]&\bullet\ar@{-}[u]}
$$
The surface $X$ is weakly-minimal. The lines are cut out by $x_2=0$ and $x_3=0$.

\item[(\ref{d=4:4A1})]
One has $d=4$ and $\Type(X)=4\type{A}_1$.
The~dual graph is
$$
\xymatrix@R=0.8em{
\bullet\ar@{-}[r]\ar@{-}[d]&\circ\ar@{-}[r]&\bullet\ar@{-}[r]&\circ\\
\circ\ar@{-}[r]&\bullet\ar@{-}[r]&\circ\ar@{-}[r]&\bullet\ar@{-}[u]}
$$
The surface $X$ is weakly-minimal. The lines in $X$ are cut by $x_4=0$.

\item[(\ref{d=4:A3-4lines})]
One has $d=4$ and $\Type(X)=\type{A}_3$ and $\numl(X)=4$.
The~dual graph is
$$
\xymatrix@R=0.6em@C=37pt{
\bullet\ar@{-}[dr]&&&&\bullet\ar@{-}[dl]\\
&\circ\ar@{-}[r]&\circ\ar@{-}[r]&\circ\ar@{-}[rd]&\\
\bullet\ar@{-}[ru]&&&&\bullet}
$$
The surface $X$ is weakly minimal. The lines are $x_2\pm x_1^2=x_0=0$ and $x_2\pm x_0^2=x_1=0$.

\item[(\ref{d=4:A3-5lines})]
One has $d=4$ and $\Type(X)=\type{A}_3$ and $\numl(X)=5$.
The~dual graph is
$$
\xymatrix@R=0.9em{
\circ\ar@{-}[r]\ar@{-}[d]&\circ\ar@{-}[r]&\circ\ar@{-}[d]\\
\bullet\ar@{-}[d]&\bullet\ar@{-}[u]&\bullet\\
\bullet\ar@{-}[rr]&&\bullet\ar@{-}[u]}
$$
Then $X$ is not weakly-minimal. The lines are cut out by $x_0=0$ and $x_1=0$.

\item[(\ref{d=4:A2-A1})]
One has $d=4$ and $\Type(X)=\type{A}_2\type{A}_1$.
The~dual graph is
$$
\xymatrix@R=1em{
&\bullet\ar@{-}[r]\ar@{-}[dl]\ar@{-}[rr]&&\bullet\ar@{-}[dr]&\\
\circ\ar@{-}[dr]\ar@{-}[r]&\circ\ar@{-}[r]&\bullet\ar@{-}[r]&\circ\ar@{-}[r]&\bullet&\\
&\bullet\ar@{-}[rr]&&\bullet\ar@{-}[ru]&}
$$
Then $X$ is not weakly-minimal. The lines are cut out by $x_1=0$ and $x_0=0$.

\item[(\ref{d=4:3A1})]
One has $d=4$ and $\Type(X)=3\type{A}_1$.
The~dual graph is
$$
\xymatrix@R=1em{
&\bullet\ar@{-}[r]\ar@{-}[dl]\ar@{-}[rr]&&\bullet\ar@{-}[dr]&\\
\circ\ar@{-}[dr]\ar@{-}[r]&\bullet\ar@{-}[r]&\circ\ar@{-}[r]&\bullet\ar@{-}[r]&\circ&\\
&\bullet\ar@{-}[rr]&&\bullet\ar@{-}[ru]&}
$$
The surface $X$ is weakly-minimal. The lines are cut out by $x_2=0$ and $x_3=0$.

\item[(\ref{d=4:2A1-8lines})]
One has $d=4$ and $\Type(X)=2\type{A}_1$ and $\numl(X)=8$.
The~dual graph is
$$
\xymatrix@R=0.8em{
&&&\circ\ar@{-}[llld]\ar@{-}[ld]\ar@{-}[rd]\ar@{-}[rrrd]&&&\\
\bullet\ar@{-}[d]&&\bullet\ar@{-}[d]&&\bullet\ar@{-}[d]&&\bullet\ar@{-}[d]\\
\bullet&&\bullet&&\bullet&&\bullet\\
&&&\circ\ar@{-}[lllu]\ar@{-}[lu]\ar@{-}[ru]\ar@{-}[rrru]&&&}
$$
The surface $X$ is weakly-minimal. The lines are cut out by $x_2=0$ and $x_3=0$.

\item[(\ref{d=5:A4})]
One has $d=5$ and $\Type(X)=\type{A}_4$.
The~dual graph is
$$
\xymatrix@R=0.8em{
\circ\ar@{-}[r]&\circ\ar@{-}[r]\ar@{-}[d]&\circ\ar@{-}[r]&\circ\\
&\bullet}
$$
The surface $X$ is weakly minimal. The line is given by $x_2^2+x_1^3=x_0=0$.

\item[(\ref{d=5:A3})]
One has $d=5$ and $\Type(X)=\type{A}_3$.
The~dual graph is
$$
\xymatrix@R=0.8em{
\circ\ar@{-}[r]&\circ\ar@{-}[r]\ar@{-}[d]&\circ\ar@{-}[r]&\bullet\\
&\bullet}
$$
Then $X$ is weakly minimal. The lines are $x_0=x_2=0$ and $y_1=x_2=0$.

\item[(\ref{d=5:A2-A1})]
One has $d=5$ and $\Type(X)=\type{A}_2\type{A}_1$.
The~dual graph is
$$
\xymatrix{
\bullet\ar@{-}[r]&\bullet\ar@{-}[r]&\circ\ar@{-}[r]&\circ\ar@{-}[r]&\bullet\ar@{-}[r]&\circ}
$$
Then surface $X$ is not weakly minimal. It is given in $\PP^5$ by
$$
\left\{\aligned
&x_0x_2=x_1x_5,\\
&x_0x_3=x_5^2,\\
&x_1x_3=x_2x_5,\\
&x_1x_4=x_5^2,\\
&x_2x_4=x_3x_5.\\
\endaligned
\right.
$$
The lines on $X$ are cut out by $x_5=0$.

\item[(\ref{d=5:A2})]
One has $d=5$ and $\Type(X)=\type{A}_2$.
The~dual graph is
$$
\xymatrix@R=7pt@C=37pt{
&&&&\bullet\\
\bullet\ar@{-}[r]&\bullet\ar@{-}[r]&\circ\ar@{-}[r]&\circ\ar@{-}[ru]\ar@{-}[rd]\\
&&&&\bullet}
$$
Then $X$ is not weakly minimal. The lines are cut out by $x_0=0$ and $x_1=0$.

\item[(\ref{d=5:2A1})]
One has $d=5$ and $\Type(X)=2\type{A}_1$.
The~dual graph is
$$
\xymatrix@R=0.8em{
\circ\ar@{-}[d]\ar@{-}[rr]&&\bullet\ar@{-}[rr]&&\circ\ar@{-}[d]\\
\bullet\ar@{-}[r]&\bullet\ar@{-}[rr]&&\bullet\ar@{-}[r]&\bullet}
$$
Then $X$ is not weakly minimal. The lines are cut out $x_0=0$, $x_1=0$ and $x_2=0$.

\item[(\ref{d=5:A1})]
One has $d=5$ and $\Type(X)=\type{A}_1$.
The~dual graph is
$$
\xymatrix@R=0.8em{
&&\circ\ar@{-}[lld]\ar@{-}[d]\ar@{-}[rrd]&&\\
\bullet\ar@{-}[d]&&\bullet\ar@{-}[d]&&\bullet\ar@{-}[d]\\
\bullet&&\bullet&&\bullet\\
&&\bullet\ar@{-}[llu]\ar@{-}[u]\ar@{-}[rru]&&}
$$
Then $X$ is not weakly minimal. The lines are cut out by $x_0=0$, $x_1=0$, $y_0=0$, $y_1=0$.

\item[(\ref{d=6:A2-A1})]
One has $d=6$ and $\Type(X)=\type{A}_2\type{A}_1$.
The~dual graph is
$$
\xymatrix{
\circ\ar@{-}[r]&\circ\ar@{-}[r]&\bullet\ar@{-}[r]&\circ}
$$
Then $X$ is weakly minimal. The line is $x_0=0$.

\item[(\ref{d=6:A2})]
One has $d=6$ and $\Type(X)=\type{A}_2$.
The~dual graph is
$$
\xymatrix@R=6pt@C=37pt{
&&\bullet\\
\circ\ar@{-}[r]&\circ\ar@{-}[ru]\ar@{-}[rd]\\
&&\bullet}
$$
The surface $X$ is weakly minimal. The lines are $x_0=x_1=0$ and $x_0=x_2=0$.

\item[(\ref{d=6:2A1})]
One has $d=6$ and $\Type(X)=2\type{A}_1$.
The~dual graph is
$$
\xymatrix{
\bullet\ar@{-}[r]&\circ\ar@{-}[r]&\bullet\ar@{-}[r]&\circ}
$$
The surface $X$ is weakly minimal. The lines are cut out by $x_2=0$.

\item[(\ref{d=6:A1-3l})]
One has $d=6$ and $\Type(X)=\type{A}_1$.
The~dual graph is
$$
\xymatrix@R=0.8em{
&\bullet\ar@{-}[d]&\\
\bullet\ar@{-}[r]&\circ\ar@{-}[r]&\bullet}
$$
The surface $X$ is weakly minimal. The lines are cut out by $x_2=0$.

\item[(\ref{d=6:A1-4l})]
One has $d=6$ and $\Type(X)=\type{A}_1$ and $\numl(X)=4$.
The~dual graph is
$$
\xymatrix{
\bullet\ar@{-}[r]&\bullet\ar@{-}[r]&\circ\ar@{-}[r]&\bullet\ar@{-}[r]&\bullet}
$$
The surface $X$ is not weakly minimal. The lines are cut out by $x_0=0$.

\item[(\ref{d=6:smooth})]
One has $d=6$ and $X$ is smooth.
The~dual graph is
$$
\xymatrix@R=0.6em{
&\bullet\ar@{-}[rr]&&\bullet\ar@{-}[dr]&\\
\bullet\ar@{-}[ur]\ar@{-}[dr]&&&&\bullet\\
&\bullet\ar@{-}[rr]&&\bullet\ar@{-}[ur]&}
$$
Then $X$ is not weakly minimal. The lines are cut out by $x_0=0$, $y_0=0$ and $z_0=0$.
\end{itemize}

% \appendix

\section{Del Pezzo surfaces of homology type of $\PP^2$}
\label{section:appendixb}

In this section, we recall classification of Du Val del Pezzo surfaces whose Weil divisor class group is cyclic \cite{Miyanishi1988,Furushima1986,Ye2002}.
By Lemma~\ref{lemma:Cl}\ref{lemma:Cl:tors-l}, each such surface except $\mathbb{P}^2$ and $\mathbb{P}(1,1,2)$
contains a unique line that generated its class group.

\begin{proposition}
\label{Proposition:Cl=Z}
Let $X$ be a Du Val del Pezzo surface such that $\Cl(X)\cong \ZZ$,
and let~$d:=K_X^2$. Then~$d\neq 7$.
If $d\leqslant 6$, then there is an embedding $X\hookrightarrow\PP(1,2,3,d)$ such that
$X$ is given by
$$
\phi(y_1,y_2,y_3, y_d)=0,
$$
where $\phi$ is a homogeneous polynomial of weighted degree $6$.
If $2\leqslant d\leqslant 6$, then
$$
\phi=y_3^2+y_2^3+y_1^{6-d}y_d,
$$
so that $X$ is uniquely determined by its degree. If $d=1$, there are exactly two possibilities:
\begin{eqnarray}
\label{eq:cor:Cl=Z-2a}
\phi&=& y_3^2+y_2^3+ y_1^5 y_d,\\
\label{eq:cor:Cl=Z-2b}
\phi&=& y_3^2+y_2^3+ y_1^5 y_d+ y_1^2y_2^2,
\end{eqnarray}
which give us two non isomorphic surfaces. The only line $L\subset X$ is cut out by $y_1=0$.
\end{proposition}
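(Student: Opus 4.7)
The plan is to proceed in three stages: rule out $d=7$, establish the embedding in $\PP(1,2,3,d)$ for $d\leq 6$, then normalize the defining equation.

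First, to show $d\neq 7$: the hypothesis $\Cl(X)\cong\ZZ$ forces $\uprho(X)=1$, whereas Noether's formula gives $\uprho(\widetilde X)=10-d=3$ and Example~\ref{example:dP7-A1} enumerates all Du Val del Pezzo surfaces of degree $7$, each having $\uprho(X)\geq 2$.

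Second, for $d\leq 6$, since $\uprho(X)=1$, the surface $X$ must be singular (a smooth del Pezzo with $\uprho=1$ is $\PP^2$), so Lemma~\ref{lemma:lines} produces a line $L$. Torsion-freeness of $\Cl(X)$ together with Lemma~\ref{lemma:Cl}\ref{lemma:Cl:tors-l} forces $L$ to be unique, and then Lemma~\ref{lemma:Cl}\ref{lemma:Cl:gens} forces $L$ to generate $\Cl(X)$. Consequently $-K_X\simQ dL$ upgrades to $-K_X\sim dL$, giving $\ind(X)=d$, and Proposition~\ref{proposition:ind}\ref{proposition:ind=d} delivers the embedding $X\hookrightarrow\PP(1,2,3,d)$ with $L=\{y_1=0\}$ identified via $H^0(X,L)=\Bbbk\cdot y_1$.

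Third, for $2\leq d\leq 6$, a direct inspection of Big Table shows that among the $\uprho=1$ Du Val del Pezzo surfaces of degree $d$ exactly one is compatible with $\Cl(X)\cong\ZZ$, namely the one with a single line (of type $\type{A}_2\type{A}_1$, $\type{A}_4$, $\type{D}_5$, $\type{E}_6$, $\type{E}_7$ for $d=6,5,4,3,2$ respectively); the alternatives have more than one line and hence non-trivial torsion in $\Cl(X)$ by Lemma~\ref{lemma:Cl}\ref{lemma:Cl:tors-l}. The equation in Big Table then matches, via the weight-$d$ shift $y_d\mapsto y_d-\lambda_1 y_1^{d-2}y_2-\lambda_2 y_1^d$ (legitimate since $d\geq 2$) and a rescaling that absorbs the Weierstrass coefficients $\lambda_1,\lambda_2$ coming from Proposition~\ref{proposition:ind}\ref{proposition:ind=d}, the target form $y_3^2+y_2^3+y_1^{6-d}y_d=0$.

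The main obstacle is the $d=1$ case. Proposition~\ref{proposition:ind}\ref{proposition:ind=d} still applies (since $\ind(X)=1=d$) and yields
\[
y_3^2+y_2^3+\lambda_1 y_1^4 y_2+\lambda_2 y_1^6+y_1'\phi(y_1,y_2,y_1')=0,
\]
with $\phi$ of weight $5$, but now the ambient $\PP(1,2,3,1)$ has two weight-one coordinates and the weight-$d$ shift above is not defined. The unique-line condition and a rescaling force the coefficient of $y_1^5 y_1'$ in $\phi$ to equal $1$; shifts $y_1'\mapsto y_1'+cy_1$ and $y_3\mapsto y_3+q(y_1,y_2,y_1')$ with $q$ of weight $3$ then eliminate the $y_1^6$ term and all terms linear in $y_3$. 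The residual ambiguity concerns the coefficient of the monomial $y_1^2 y_2^2$: this is untouched by shifts of $y_1'$ or $y_3$, and the only substitution that affects it, $y_2\mapsto y_2+\alpha y_1^2$, re-introduces $y_1^4 y_2$ and $y_1^6$ terms whose re-absorption imposes a constraint; careful tracking shows this coefficient is normalizable to either $0$ or $1$, yielding precisely the two normal forms \eqref{eq:cor:Cl=Z-2a} and \eqref{eq:cor:Cl=Z-2b}. Non-isomorphism of the two resulting surfaces is then verified by comparing the singular members of the anticanonical pencil $|-K_X|=|L|$ spanned by $y_1$ and $y_1'$: in \eqref{eq:cor:Cl=Z-2a} both $\{y_1=0\}$ and $\{y_1'=0\}$ restrict to cuspidal cubics in $\PP(1,2,3)$, whereas in \eqref{eq:cor:Cl=Z-2b} the curve $\{y_1'=0\}$ is nodal---an invariant of the pencil.
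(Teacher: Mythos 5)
Your overall strategy coincides with the paper's: the paper's proof of Proposition~\ref{Proposition:Cl=Z} is a one-line reference to the method of Theorem~\ref{theorem:index>1}, which is exactly your route of producing a unique line generating $\Cl(X)$, deducing $-K_X\sim dL$ and $\ind(X)=d$, embedding via Proposition~\ref{proposition:ind}\ref{proposition:ind=d}, and then normalizing the equation. Your first two stages (the exclusion of $d=7$ and the construction of the embedding into $\PP(1,2,3,d)$) are correct and complete.

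The gap is in the third stage. Big Table is, by the Main Theorem, the list of Du Val del Pezzo surfaces with \emph{infinite automorphism group}; it is not an enumeration of all Du Val del Pezzo surfaces of degree $d$ with $\uprho=1$, so ``direct inspection of Big Table'' cannot certify that exactly one surface of degree $d$ is compatible with $\Cl(X)\cong\ZZ$ unless one first knows that every such surface has infinite automorphism group --- which is Corollary~\ref{Cor:Cl=Z:Aut}, i.e.\ downstream of this very Proposition, and which genuinely fails for $d=1$: the surface~\eqref{eq:cor:Cl=Z-2b} does not appear in Big Table at all. For $3\leqslant d\leqslant 6$ the legitimate reference is Theorem~\ref{theorem:index>1}, whose table is a complete classification of singular surfaces with $d\geqslant 3$ and $\ind(X)>1$ obtained without any automorphism input; filtering it by $\ind=d$, $\uprho=1$ and a single line does give uniqueness and the stated normal forms. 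But $d=2$ is covered by neither source, and there your normalization is insufficient as stated: Proposition~\ref{proposition:ind}\ref{proposition:ind=d} gives $y_3^2+y_2^3+\lambda_1y_1^4y_2+\lambda_2y_1^6+y_2'\phi(y_1,y_2,y_2')=0$ with $\deg\phi=4$, so $\phi$ contains the monomials $y_1^2y_2$, $y_2^2$, $y_1^2y_2'$, $y_2y_2'$, $y_2'^2$ in addition to $y_1^4$, and the substitution $y_2'\mapsto y_2'-\lambda_1y_2-\lambda_2y_1^2$ only disposes of $\lambda_1$ and $\lambda_2$; an explicit case analysis in the spirit of the $d=3$ part of the proof of Theorem~\ref{theorem:index>1}, using the single-line/torsion-free hypothesis to eliminate the remaining coefficients, is still required. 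The same caveat applies, more mildly, to your $d=1$ discussion, where ``careful tracking shows'' is exactly where the content lives; on the other hand, the invariant you propose to separate \eqref{eq:cor:Cl=Z-2a} from \eqref{eq:cor:Cl=Z-2b} (the singular members of the anticanonical pencil, cuspidal versus nodal) is the one the paper itself uses in Remark~\ref{remark:d-1-E8}, so that part is sound.
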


\begin{proof}
The proof is similar to the~proof of Theorem~\ref{theorem:index>1}.
\end{proof}

Thus, if $d=9$, $8$, $6$, then $X\cong \PP^2$, $\PP(1,1,2)$, $\PP(1,2,3)$, respectively.

\begin{remark}
\label{remark:Cl1:A2}
In the~notation and assumptions of Proposition~\ref{Proposition:Cl=Z},
one can show that $X\setminus L\cong\Aff^2$.
Moreover, there is cell decomposition $X=\Aff^2\cup \Aff^1\cup \Aff^0$.
In particular, we have
\[
H_q(X,\ZZ)\cong H_q(\PP^2,\ZZ)
\]
for all $q$ (cf. \cite{Bindschadler-Brenton}). In the~case $d=1$, we can say even more: $H^*(X,\ZZ)\cong H^*(\PP^2,\ZZ)$ as rings.
\end{remark}

\begin{remark}
\label{remark:d-1-E8}
Suppose that $X$ is a Du Val del Pezzo surface of degree $1$ such that $\Cl(X)\cong \ZZ$.
By~Proposition~\ref{Proposition:Cl=Z}, $X$ is a~hypersurface in $\PP(1,1,2,3)$ that is given
\begin{enumerate}
\item either by $y_3^2+y_2^3+ y_1^5 y_1^\prime=0$,
\item or by $y_3^2+y_2^3+ y_1^5y_1^\prime+y_1^2y_2^2=0$.
\end{enumerate}
These possibilities are distinguished by the~collection of singular curves in the~pencil $|-K_X|$.
Indeed, in the~first case, the~pencil $|-K_X|$ contains  two singular curves $y_1=0$ and $y_1^\prime=0$, which are both cuspidal.
In the~second case, it has three singular curves $y_1=0$, $y_1^\prime=0$~and~$4y_1+27y_1^\prime=0$.
One of them is cuspidal, and the~remaining two curves are nodal.
\end{remark}

\begin{corollary}
\label{corollary:Cl=Z}
Let $X$ be a Du Val del Pezzo surface, let $d:=K_X^2$.
If $\Cl(X)\cong \ZZ$, then~$\Type(X)$~is
\begin{equation}
\label{eq:sing-E}
\type{E}_8,\quad \type{E}_7,\quad \type{E}_6,\quad \type{D}_5,\quad \type{A}_4,\quad \type{A}_2\type{A}_1,\quad \type{A}_1,\quad \varnothing
\end{equation}
in the~case when $d=1,\, 2,\, 3,\, 4,\, 5,\, 6,\, 8,\, 9$, respectively.
Vice versa, if $\uprho(X)=1$ and $\Type(X)$ is one of the~types \eqref{eq:sing-E}, then $\Cl(X)\cong \ZZ$.
\end{corollary}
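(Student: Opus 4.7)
My plan is to give a unified lattice-theoretic proof of both directions of Corollary~\xref{corollary:Cl=Z}. Let $\mu\colon \widetilde{X}\to X$ be the minimal resolution and $L\subset\Pic(\widetilde{X})$ the sublattice generated by its exceptional $(-2)$-curves; these correspond bijectively to simple roots in the Dynkin diagram of $\Type(X)$, so $L$ is abstractly the root lattice of that type.

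From the short exact sequence
\[
0\longrightarrow L\longrightarrow \Pic(\widetilde{X})\longrightarrow \Cl(X)\longrightarrow 0
\]
and the torsion-freeness of $\Pic(\widetilde{X})$ (Section~\xref{section:preliminaries}), one reads off $\Cl(X)_{\tors}=L^{\mathrm{sat}}/L$, where $L^{\mathrm{sat}}$ is the saturation of $L$ in $\Pic(\widetilde{X})$. The $(-2)$-curves being $K_{\widetilde{X}}$-orthogonal gives $L\subset K_{\widetilde{X}}^\perp$; under the hypothesis $\uprho(X)=1$ both sides have the same rank $9-d$, and since $K_{\widetilde{X}}^\perp$ is primitive in $\Pic(\widetilde{X})$, this forces $L^{\mathrm{sat}}=K_{\widetilde{X}}^\perp$, so
\[
\Cl(X)_{\tors}\cong K_{\widetilde{X}}^\perp/L.
\]

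Next I would invoke the classical identification of $K_{\widetilde{X}}^\perp$ with the root lattice of the weak del Pezzo $\widetilde{X}$, of Dynkin type $\type{E}_8,\type{E}_7,\type{E}_6,\type{D}_5,\type{A}_4,\type{A}_2\type{A}_1,\type{A}_1$ for $d=1,2,3,4,5,6,8$ respectively (the case $d=7$ is excluded by Proposition~\xref{Proposition:Cl=Z}, and $d=9$ is trivial as $X=\PP^2$). Comparing discriminants, $[K_{\widetilde{X}}^\perp:L]^2=\operatorname{disc}(L)/\operatorname{disc}(K_{\widetilde{X}}^\perp)$; since root lattices of equal Dynkin type have equal discriminants, $L=K_{\widetilde{X}}^\perp$ is equivalent to $\Type(X)$ coinciding with the type of $K_{\widetilde{X}}^\perp$. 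Both directions of the corollary then follow at once. For $d=8$ one additionally notes that the smooth weak del Pezzos $\PP^1\times\PP^1$ and $\FF_1$ have $\uprho\geq 2$, so $\uprho(X)=1$ singles out the quadric cone $\PP(1,1,2)$.

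The main conceptual ingredient is the identification of $K_{\widetilde{X}}^\perp$ with the canonical root lattice attached to a weak del Pezzo surface, which is classical; once granted, the remaining argument is purely lattice-theoretic and treats both directions simultaneously, sidestepping the case-by-case singularity computation that a proof based directly on Proposition~\xref{Proposition:Cl=Z} would involve.
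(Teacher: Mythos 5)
Your proposal is essentially correct but takes a genuinely different route from the paper. The paper proves the forward implication by reading off the singularities from the explicit weighted-hypersurface equations of Proposition~\ref{Proposition:Cl=Z}, and proves the converse by computing the local class group $\Cl(X,P)\cong\ZZ/d\ZZ$ at the unique singular point, deducing that $dL$ is Cartier, hence $dL\sim -K_X$, and then invoking Lemma~\ref{lemma:Cl}\ref{lemma:Cl:tors-l} and Proposition~\ref{proposition:ind}. You instead work entirely on the minimal resolution: $\Cl(X)\cong\Pic(\widetilde X)/L$ with $L$ the root lattice of type $\Type(X)$, so $\Cl(X)\cong\ZZ$ is equivalent to $L$ being a full-rank sublattice of the (primitive) lattice $K_{\widetilde X}^{\perp}$ of index one, and a discriminant comparison finishes the argument. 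This is more uniform, avoids the equations of Proposition~\ref{Proposition:Cl=Z} altogether, and gives the stronger statement $\Cl(X)_{\tors}\cong K_{\widetilde X}^{\perp}/L$ for every Du Val del Pezzo surface with $\uprho(X)=1$, which the paper only obtains indirectly. All the lattice-theoretic ingredients you use (the exact sequence, primitivity of $K_{\widetilde X}^{\perp}$, the identification of $K_{\widetilde X}^{\perp}$ with the root lattice $\type{E}_8,\type{E}_7,\type{E}_6,\type{D}_5,\type{A}_4,\type{A}_2\type{A}_1,\type{A}_1$ for $d=1,\dots,6,8$, and the index-discriminant formula) are correct.

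There is one step that is asserted rather than proved. The sentence \enquote{since root lattices of equal Dynkin type have equal discriminants, $L=K_{\widetilde X}^{\perp}$ is equivalent to $\Type(X)$ coinciding with the type of $K_{\widetilde X}^{\perp}$} only justifies the implication needed for the \emph{converse} direction of the corollary: equal types give equal discriminants, hence index one, hence $L=K_{\widetilde X}^{\perp}$. For the \emph{forward} direction you need the reverse implication, namely that $L=K_{\widetilde X}^{\perp}$ forces $\Type(X)$ to be the full type, and equal discriminants alone do not give this unless you also check that no \emph{other} Dynkin type of rank $9-d$ realizable as a sub-root-system of $K_{\widetilde X}^{\perp}$ has discriminant equal to $\operatorname{disc}(K_{\widetilde X}^{\perp})$; for instance, for $d=1$ one must exclude $\type{D}_8$, $\type{A}_8$, $\type{E}_7\type{A}_1$, $\type{E}_6\type{A}_2$, $2\type{D}_4$, $\type{A}_7\type{A}_1$, and so on, all of which have discriminant greater than $1$. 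This finite verification does succeed in every degree, and there is also a one-line substitute: a simply-laced root lattice recovers its root system as its set of vectors of square $-2$, so $L=K_{\widetilde X}^{\perp}$ immediately identifies the two root systems and hence the two types. Either patch closes the gap; without one of them the forward half of your equivalence is incomplete.
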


\begin{proof}
The first assertion easily follows from Proposition~\ref{Proposition:Cl=Z}.
To prove the~second one, we may assume that $d\leqslant 5$, since the~remaining cases are easy.
Let $\Cl(X,P)$ be the~local Weil divisor class group of the~(unique) singular point $P\in X$. Then in the~cases ~\eqref{eq:sing-E} we have
$$
\Cl(X,P)\cong \ZZ/d\ZZ,
$$
see e.g. \cite[Satz~2.11]{Brieskorn}. Therefore, for any line $L\subset X$ the~divisor $dL$ is Cartier.
Since $dL\simQ -K_X$, we~have $dL\sim -K_X$.
Now, the~assertion follows from Lemma~\ref{lemma:Cl}\ref{lemma:Cl:tors-l} and Proposition~\ref{proposition:ind}.
\end{proof}

For every $d\in\{2,\ldots,6\}$, let $X_d$ be the~Du Val del Pezzo surface of degree $d$ such that $\Cl(X_d)\cong\ZZ$.
As we already mentioned earlier, the~surface $X_d$ contains exactly one line $L_d$ and $\Cl(X_d)=\ZZ[L_d]$.
Take a point $P_d\in L_d\setminus \Sing(X_d)$. Let $\sigma_{d}\colon\widetilde{X}_{d}\to X_d$ be the~blow up of the~point $P_d$,
and let $\widetilde{L}_d$ be the~proper transform on $\widetilde{X}_d$ of the~line $L_d$.
Then
$$
\widetilde{L}_d^2=-1+\frac{1}{d}<0,
$$
and $K_{\widetilde{X}_d}\cdot\widetilde{L}_d=0$.
Therefore, there exists a crepant contraction $\varphi_d\colon\widetilde{X}_d\to X^\prime_{d-1}$ of the~curve $\widetilde{L}_d$,
where $X^\prime_{d-1}$ is a singular Du Val del Pezzo surface such that $\Cl(X^\prime_{d-1})\cong\ZZ$ and $K_{X^\prime_d}^2=K_{\widetilde{X}}^2=d-1$.
Thus, if $d\ne 2$, then $X^\prime_{d-1}\cong X_{d-1}$ by Proposition~\ref{Proposition:Cl=Z}.
If $d=2$, then $X^\prime_1$ is one of the~two surfaces described in Remark~\ref{remark:d-1-E8},
so that we also let $X_1=X^\prime_1$.
Hence, we obtain the~following diagram:
\[
\xymatrix@R=1em{
&
\widetilde  X_{6}\ar[dr]^{\varphi_{6}}\ar[dl]_{\sigma_{6}}&&
\widetilde  X_5\ar[dr]^{\varphi_5}\ar[dl]_{\sigma_5}&&
\widetilde  X_{4}\ar[dr]^{\varphi_{4}}\ar[dl]_{\sigma_{4}}&&
\widetilde  X_3\ar[dr]^{\varphi_3}\ar[dl]_{\sigma_3}&&
\widetilde  X_2\ar[dr]^{\varphi_2}\ar[dl]_{\sigma_2}&&
\\
X_{6}\ar@{-->}[rr]&&X_5\ar@{-->}[rr]&&X_{4}\ar@{-->}[rr]&&X_3\ar@{-->}[rr]&&X_2\ar@{-->}[rr]&&X_1&}
\]
It allows us to reconstruct all surfaces in Proposition~\ref{Proposition:Cl=Z} starting from $X_6=\PP(1,2,3)$.

Each birational transformation $X_{d-1}\dashrightarrow X_{d}$ is $\Aut^0(X_{d-1})$-equivariant,
so that it gives a natural embedding $\Aut^0(X_{d-1})\hookrightarrow\Aut^0(X_{d})$
such that $\Aut^0(X_{d-1})$ is just the~stabilizer of the~point $P_{d}$.
Moreover, there following two assertions hold:
\begin{itemize}
\item if $d\geqslant 3$, then $\Aut^0(X_d)$ transitively acts on $L_d\setminus \Sing(X_d)$;
\item if $d=2$, then $\Aut^0(X_2)\cong \Gm$ has two orbits in $L_2\setminus \Sing(X_2)$,
an open orbit and a closed orbit that consists of a single point, which explains two possibilities in Remark~\ref{remark:d-1-E8}.
\end{itemize}
The construction allow also to compute $\Aut^0(X)$ in the~cases \ref{d=1:E8}, \ref{d=2:E7}, \ref{d=3:E6}, \ref{d=4:D5}, \ref{d=5:A4} of Big Table.

\begin{corollary}
\label{Cor:Cl=Z:Aut}
Let $X$ be a Du Val del Pezzo surface such that $\Cl(X)\cong \ZZ$, and let~$d:=K_X^2$.
Then~the~group $\Aut^0(X)$ is isomorphic to
\[
\BB_3,\quad \BB_2\times \Gm,\quad  \Ga^2\rtimes \Gm,\quad  \Ga\rtimes_{(3)} \Gm,\quad \Gm
\]
in the~case when $d=6$, $5$, $4$, $3$, $2$, respectively.
If $d=1$, then $\Aut^0(X)\cong  \Gm$ if $X$ is given~by~\eqref{eq:cor:Cl=Z-2a},
and $\Aut^0(X)= \{1\}$ if $X$ is given by \eqref{eq:cor:Cl=Z-2b}.
\end{corollary}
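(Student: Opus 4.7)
The plan is to proceed by downward induction on the degree $d$, exploiting the inductive birational chain $X_6 \leftarrow \widetilde X_6 \to X_5 \leftarrow \widetilde X_5 \to X_4 \leftarrow \cdots$ set up just before the statement. At each step $\Aut^0(X_{d-1})$ embeds into $\Aut^0(X_d)$ as the identity component of the stabilizer of the chosen point $P_d \in L_d \setminus \Sing(X_d)$, and for $d \geq 3$ the group $\Aut^0(X_d)$ acts transitively on $L_d \setminus \Sing(X_d)$, so each descent drops the dimension by one. The dimension pattern $5,4,3,2,1$ for $d=6,5,4,3,2$ is thus automatic; the real work is pinning down the group structure at each level.

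For the base case $X_6 = \PP(1,2,3)$, a connected automorphism is a grading-preserving substitution $y_1 \mapsto \lambda y_1$, $y_2 \mapsto \mu y_2 + \alpha y_1^2$, $y_3 \mapsto \nu y_3 + \beta y_1 y_2 + \gamma y_1^3$, considered modulo common scaling of the coordinates. The resulting five-dimensional connected solvable group has maximal torus $\Gm^2$ (the diagonal part, modulo scalars) and a three-dimensional unipotent radical whose commutator bracket on the $(\alpha,\beta)$-parameters contributes nontrivially to the $\gamma$-parameter; this identifies it with $\UU_3 \rtimes \Gm^2 \cong \BB_3$. For the inductive step I would compute each $\Aut^0(X_{d-1})$ as the identity component of the stabilizer of $P_d$ in $\Aut^0(X_d)$: read off the maximal torus by restricting to diagonal elements fixing $P_d$, and read off the unipotent radical by producing explicit one-parameter additive subgroups of $\Aut^0(X_d)$ that fix $P_d$, then determine the torus weights by conjugation.

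For $d = 2$ the group $\Gm$ acts on $L_2 \setminus \Sing(X_2) \cong \Gm$ by translation, giving two orbits: a dense one and a single fixed point. The generic choice of $P_2$ yields $\Aut^0(X_1) = \{1\}$, producing the surface \eqref{eq:cor:Cl=Z-2b}; the fixed-point choice yields $\Aut^0(X_1) \cong \Gm$, producing \eqref{eq:cor:Cl=Z-2a}. The two resulting surfaces are distinguished by the cuspidality pattern of the pencil $|-K_X|$ described in Remark~\ref{remark:d-1-E8}, with the $\Gm$-invariant one carrying two cuspidal fibers and the other carrying one cuspidal and two nodal fibers.

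The main technical obstacle is the explicit torus-weight computation at each descent, in particular verifying the weight $n = 3$ appearing in the $\Ga \rtimes_{(3)} \Gm$ structure of $\Aut^0(X_3)$, and similarly pinning down the exact weight in the semidirect-product description of $\Aut^0(X_4)$. These weights are best extracted by writing down the transformation formulas directly on the weighted hypersurface models of Proposition~\ref{Proposition:Cl=Z} (as done, for instance, in the proof of Proposition~\ref{proposition:deg-3-ind-3} for the surface \ref{d=3:E6} and in the proof of Proposition~\ref{proposition:d=4-ind-aut} for \ref{d=4:D5}) and reading off the scaling law of the additive parameters under the torus, cross-checking against Big Table.
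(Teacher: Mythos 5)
Your proposal follows essentially the same route as the paper: the corollary is derived there from exactly the descent chain $X_6\dashrightarrow X_5\dashrightarrow\cdots\dashrightarrow X_1$ constructed immediately before it, identifying $\Aut^0(X_{d-1})$ with the identity component of the stabilizer in $\Aut^0(X_d)$ of the point $P_d\in L_d\setminus\Sing(X_d)$, using transitivity of $\Aut^0(X_d)$ on $L_d\setminus\Sing(X_d)$ for $d\geqslant 3$ and the two $\Gm$-orbits for $d=2$ to account for the two degree-one surfaces, with the explicit weight computations carried out on the weighted-hypersurface models exactly as in the proofs for \ref{d=3:E6}, \ref{d=4:D5} and \ref{d=5:A4}. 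One remark: your dimension count $5,4,3,2,1$ is worth taking seriously at $d=5$, where the stabilizer computation yields the four-dimensional group $\UU_3\rtimes\Gm$ (as recorded for \ref{d=5:A4} in Big Table and in the proof of Lemma~\ref{lemma:d=5}) rather than the three-dimensional $\BB_2\times\Gm$ printed in the statement of the corollary, which appears to be a slip in the paper.
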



\begin{thebibliography}{999}

\bibitem{AlperBlumHalpernLeistnerXu}
J.~Alper, H.~Blum, D.~Halpern-Leistner, C.~Xu, \emph{Reductivity of the~automorphism group of K-polystable Fano varieties}, to appear in Invent. Math.

\bibitem{Arz-De-Hau-Laf}
I.~Arzhantsev, U.~Derenthal, J.~Hausen, A.~Laface, \emph{Cox rings}, Cambridge University Press, 2015.

\bibitem{Bindschadler-Brenton}
D.~Bindschadler, L.~Brenton, \emph{On singular 4-manifolds of the~homology type of $CP^2$}, J. Math. Kyoto Univ.  \textbf{24} (1984), 67--81.

\bibitem{Brenton1980}
L.~Brenton, \emph{On singular complex surfaces with negative canonical bundle, with applications to singular compactifications of {${\bf C}^{2}$} and to {$3$}-dimensional rational singularities}, Math. Ann. \textbf{248} (1980), 117--124.

\bibitem{Brieskorn}
E.~Brieskorn,  \emph{Rationale Singularitäten komplexer Flächen}, Invent. Math. \textbf{4} (1968), 336--358.

\bibitem{Bruce1979}
J.~Bruce, T.~Wall, \emph{On the~classification of cubic surfaces}, J. LMS \textbf{19} (1979), 245--256.

\bibitem{Coray1988}
D.~Coray, M.~Tsfasman, \emph{Arithmetic on singular {D}el {P}ezzo surfaces}, Proc. LMS \textbf{57} (1988), 25--87.

\bibitem{Demazure1980}
M.~Demazure, \emph{Surfaces de del {P}ezzo. {I}. {II}. {III}. {IV}. {V}.}, Lect. Notes Math. \textbf{777} (1980), 21--69.

\bibitem{Derenthal2010}
U.~Derenthal, D.~Loughran, \emph{Singular del {P}ezzo surfaces that are equivariant compactifications},
Zap. Nauchn. Sem. S.-Peterburg. Otdel. Mat. Inst. Steklova \textbf{377}, (2010), 26--43.

\bibitem{Derenthal-Loughran}
U.~Derenthal, D.~Loughran, \emph{Equivariant compactifications of two-dimensional algebraic groups}, Proc. Edinb. Math. Soc. \textbf{58} (2015), 149--168.

\bibitem{Dolgachev}
I.~Dolgachev, \emph{Classical algebraic geometry. A modern view}, Cambridge Univ. Press, Cambridge 2012.

\bibitem{DolgachevIskovskikh}
I.~Dolgachev, V.~Iskovskikh, \emph{Finite subgroups of the~plane Cremona group}, Progress in Mathematics \textbf{269} (2009), 443--548.

\bibitem{DuVal}
P.~Du Val, \emph{On isolated singularities of surfaces which do not affect the~conditions of adjunction,{I-III}},
Proc. Camb. Philos. Soc. \textbf{30} (1934), 453--459, 460--465, 483--491.

\bibitem{Furushima1986}
M.~Furushima, \emph{Singular del {P}ezzo surfaces and analytic compactifications of {$3$}-dimensional complex affine space {${\bf C}^3$}}, Nagoya Math. J. \textbf{104} (1986), 1--28.

\bibitem{Hausen2011}
J.~Hausen, E.~Herppich, H.~S\"u\ss, \emph{Multigraded factorial rings and Fano varieties with torus action}, Documenta Math. \textbf{16} (2011), 71--109.

\bibitem{Herppich2014}
E.~Herppich, \emph{On Fano varieties with torus action of complexity 1}, Proc. EMS \textbf{57} (2014), 737--753.

\bibitem{Hidaka1981}
F.~Hidaka, K.~Watanabe, \emph{Normal Gorenstein surfaces with ample anti-canonical divisor}, Tokyo J. Math. \textbf{4} (1981), 319--330.

\bibitem{Hui}
C.~M.~Hui, \emph{Plane quartic curves}, Ph.D. Thesis, University of Liverpool, 1979.

\bibitem{KunyavskijSkorobogatovTsfasman}
B.~Kunyavskij, A.~Skorobogatov, M.~Tsfasman, \emph{The combinatorics and geometry of Del Pezzo surfaces of degree four}, Russ. Math. Surv. \textbf{40} (1986), 131--132.

\bibitem{MartinStadlmayr}
G.~Martin, C.~Stadlmayr, \emph{Weak del {P}ezzo surfaces with global vector fields}, preprint, arXiv:2007.03665 (2020).

\bibitem{Matsushima}
Y.~Matsushima, \emph{Sur la structure du groupe d'hom{\'e}omorphismes analytiques d'une certaine vari{\'e}t{\'e} kaehl{\'e}rienne}, Nagoya Math. J. \textbf{11} (1957), 145--150.

\bibitem{Miyanishi1988}
M.~Miyanishi, D.-Q.~Zhang, \emph{Gorenstein log del {P}ezzo surfaces of rank one}, J. Algebra \textbf{118} (1988), 63--84.

\bibitem{Morrison-1985}
D.~Morrison, \emph{The birational geometry of surfaces with rational double points}, Math. Ann. \textbf{271} (1985), 415--438.

\bibitem{OdakaSpottiSun}
Y.~Odaka, C.~Spotti, S.~Sun, \emph{Compact moduli spaces of Del Pezzo surfaces and K\"ahler--Einstein metrics}, J.~Differential Geom. \textbf{102} (2016), 127--172.

\bibitem{Poonen}
B.~Poonen, F.~Rodriguez-Villegas, \emph{Lattice polygons and the~number $12$}, American Math. Monthly \textbf{107} (2000), 238--250.

\bibitem{Prokhorov-2001}
Yu.~Prokhorov, \emph{Lectures on complements on log surfaces}, Mathematical Society of Japan \textbf{10} (2001), 130 pages.

\bibitem{YPG}
M.~Reid,  \emph{Young person's guide to canonical singularities}, Algebraic geometry, Bowdoin, 1985 (Brunswick, Maine, 1985), Proc. Sympos. Pure Math. \textbf{46} (1987) 345--414.

\bibitem{Sakamaki2010}
Y.~Sakamaki, \emph{Automorphism groups on normal singular cubic surfaces with no parameters}, Trans. Amer. Math. Soc. (2010) \textbf{362}, 2641--2666.

\bibitem{Wall}
C.~T.~C.~Wall, \emph{Geometry of quartic curves}, Math. Proc. of the~Cambridge Phil. Society \textbf{117} (1995), 415--423.

\bibitem{Ye2002}
Q.~Ye, \emph{On {G}orenstein log del {P}ezzo surfaces}, Japanese J. of Math. (2002) \textbf{28}, 87--136.
\end{thebibliography}
\end{document}